\numberwithin{equation}{section}
\newtheorem{theorem}{Theorem}[section]
\newtheorem{lemma}[theorem]{Lemma}
\newtheorem{proposition}[theorem]{Proposition}
\newtheorem{corollary}[theorem]{Corollary}
\theoremstyle{definition}
\newtheorem{example}[theorem]{Example}
\newtheorem{definition}[theorem]{Definition}
\newtheorem*{A1}{Assumption (S)}
\newtheorem*{A2}{Assumption (E)}
\newtheorem*{A3}{Assumption (F)}
\newtheorem*{A41}{Assumption (C)}
\newtheorem*{A42}{Assumption (U)}
\newtheorem{remark}[theorem]{Remark}
\def\E{{\mathbb E}}
\def\R{{\mathbb R}}
\def\N{{\mathbb N}}
\def\P{{\mathcal P}}
\def\M{{\mathcal M}}
\def\B{{\mathcal B}}
\def\W{{\mathcal W}}
\def\V{{\mathcal V}}
\def\S{{\mathcal S}}
\def\Y{{\mathcal Y}}
\def\L{{\mathcal L}}
\def\X{{\mathcal X}}
\def\H{{\mathcal H}}
\def\Q{{\mathcal Q}}
\def\A{{\mathbb A}}
\def\F{{\mathcal F}}
\def\C{{\mathcal C}}
\title[Probabilistic weak formulation of mean field games]{A probabilistic weak formulation of mean field games and applications}
\author{Ren\'{e} Carmona}
\address{ORFE, Bendheim Center for Finance, Princeton University, Princeton, NJ  08544, USA.}
\email{rcarmona@princeton.edu}
\author{Daniel Lacker}
\address{ORFE,  Princeton University, Princeton, NJ  08544, USA.}
\email{dlacker@princeton.edu}
\thanks{Partially supported  by NSF: DMS-0806591}
\keywords{Mean Field Games, Weak Formulation, Price Impact, Flocking Models}
\subjclass[2010]{Primary 60H30; secondary 93E20, 91A13}
\begin{document}
\maketitle

\begin{abstract}
Mean field games are studied by means of the weak formulation of stochastic optimal control. This approach allows the mean field interactions to enter through both state and control processes and take a form which is general enough to include rank and nearest-neighbor effects.  Moreover, the data may depend discontinuously on the state variable, and more generally its entire history. Existence and uniqueness results are proven, along with a procedure for identifying and constructing distributed strategies which provide approximate Nash equlibria for finite-player games. Our results are applied to a new class of multi-agent price impact models and a class of flocking models for which we prove existence of equilibria.
\end{abstract}

\section{Introduction}
The methodology of mean field games initiated by Lasry and Lions \cite{lasrylionsmfg} has provided an elegant and tractable way to study approximate Nash equilibria for large-population stochastic differential games with a so-called mean field interaction. In such games, the players' private state processes are coupled only through their empirical distribution. Borrowing intuition from statistical physics, Lasry and Lions study the system which should arise in the limit as the number of players tends to infinity. A set of strategies for the finite-player game is then derived from the solution of this limiting problem. These strategies form an approximate Nash equilibrium for the $n$-player game if $n$ is large, in the sense that no player can improve his expected reward by more than $\epsilon_n$ by unilaterally changing his strategy, where $\epsilon_n \rightarrow 0$ as $n \rightarrow \infty$ (see \cite{huangmfg1}). An attractive feature of these strategies is that they are distributed, in the sense that the strategy of a single player depends only on his own private state.

Mean field games have seen a wide variety of applications, including models of oil production, volatility formation, population dynamics, and economic growth (see \cite{lasrylionsmfg, gueantlasrylionsmfg, gueantlasrylionsmfg-growth, lachapellecrowds} for some examples). Independently, Huang, Malham\'e, and Caines developed a similar research program under the name of Nash Certainty Equivalent. The interested reader is referred to \cite{huangmfg1} and \cite{huangmfg2} and the papers cited therein. They have since generalized the framework, allowing for several different types of players and one major player. 

The finite-player games studied in this paper are summarized as follows. For $i=1,\ldots,n$, the dynamics of player $i$'s private state process are given by a stochastic differential equation (SDE):
\begin{align}
dX^i_t = b(t,X^i,\mu^n,\alpha^i_t)dt + \sigma(t,X^i)dW^i_t, \ \ X^i_0 = \xi^i, \label{introSDE}
\end{align}
where $\mu^n$ is the empirical distribution of the states:
\begin{equation}
\label{fo:empirical}
\mu^n = \frac{1}{n}\sum_{j=1}^n\delta_{X^j}.
\end{equation}
The drift $b$ may depend on time, player $i$'s private state (possibly its history), the distribution of the private states (possibly their histories), and player $i$'s own choice of control $\alpha^i_t$. Here, $W^i$ are independent Wiener processes and $\xi^i$ are independent identically distributed random variables independent of the Wiener processes, and each player has the same drift and volatility coefficients. Moreover, each player $i$ has the same objective, which is to maximize
\[
\E\left[\int_0^Tf(t,X^i,\mu^n,q^n_t,\alpha^i_t)dt + g(X^i,\mu^n)\right], \text{ where } q^n_t = \frac{1}{n}\sum_{j=1}^n\delta_{\alpha_t^j}
\]
over all admissible choices of $\alpha^i$, subject to the constraint (\ref{introSDE}). Note that the running reward function $f$ may depend upon the empirical distribution of the controls at time $t$, in addition to the same arguments as $b$. This is part of the thrust of the paper. Of course, each player's objective depends on the actions of the other players, and so we look for Nash equilibria.

Intuitively, if $n$ is large, because of the symmetry of the model, player $i$'s contribution to $\mu^n$ is negligible, and he may as well treat $\mu^n$ as fixed. This line of argument leads to the derivation of the mean field game problem, which has the following structure:
\begin{enumerate}
\item Fix a probability measure $\mu$ on path space and a flow $\nu: t \mapsto \nu_t$ of measures on the control space;
\item With $\mu$ and $\nu$ frozen, solve the standard optimal control problem:
\begin{align}
\begin{cases}
&\sup_\alpha\E\left[\int_0^Tf(t,X,\mu,\nu_t,\alpha_t)dt + g(X,\mu)\right], \text{ s.t.} \\
&dX_t = b(t,X,\mu,\alpha_t)dt + \sigma(t,X)dW_t, \ \ X_0 = \xi ;
\end{cases} \label{mfg1}
\end{align}
\item Find an optimal control $\alpha$, inject it into the dynamics of (\ref{mfg1}), and find the law $\Phi_x(\mu,\nu)$ of the optimally controlled state process, and the flow $\Phi_\alpha(\mu,\nu)$ of marginal laws  of the optimal control process;
\item Find a fixed point $\mu = \Phi_x(\mu,\nu)$, $\nu = \Phi_\alpha(\mu,\nu)$.
\end{enumerate}
This is to be interpreted as the optimization problem faced by a single representative player in a game consisting of infinitely many independent and identically distributed (i.i.d.) players. In the first three steps, the representative player determines his best response to the other players' states and controls which he treats as given. The final step is an equilibrium condition; if each player takes this approach, and there is to be any consistency, then there should be a fixed point.
Once existence and perhaps uniqueness of a fixed point are established, the second problem is to use this fixed point to construct approximate Nash equilibrium strategies for the original finite-player game. These strategies will be constructed from the optimal control for the problem of step (2), corresponding to the choosing $(\mu,\nu)$ to be the fixed point in step (1). 

The literature on mean field games comprises two streams of papers: one based on analytic methods and one on a probabilistic approach.

Lasry and Lions (e.g. \cite{lasrylionsmfg}, \cite{gueantlasrylionsmfg}, etc.) study these problems via a system of partial differential equations (PDEs). The control problem gives rise to a Hamilton-Jacobi-Bellman equation for the value function, which evolves backward in time. The law of the state process is described by a Kolmogorov equation, which evolves forward in time. These equations are coupled through the dependence on the law of the state process, in light of the consistency requirement (4). This approach applies in the Markovian case, when the data $b$, $\sigma$, $f$, and $g$ are smooth or at least continuous functions of the states and not of their pasts. Results in this direction include two broad classes of mean field interactions: some have considered local dependence of the data on the measure argument, such as functions $(x,\mu) \mapsto G(d\mu(x)/dx)$ of the density, while others have studied nonlocal functionals, which are continuous with respect to a weak or Wasserstein topology. 

More recently, several authors have taken a probabilistic approach to this problem by using the Pontryagin maximum principle to solve the optimal control problem. See, for example, \cite{carmonadelarue-mkv,bensoussan-lqmfg,carmonadelarue-mfg}. Typically in a stochastic optimal control problem, the backward stochastic differential equations (BSDEs) satisfied by the adjoint processes are coupled with the forward SDE for the state process through the optimal control, which is generally a function of both the forward and backward parts. When the maximum principle is applied to mean field games, the forward and backward equations are coupled additionally through the law of the forward part. Carmona and Delarue investigate this new type of forward-backward stochastic differential equations (FBSDEs) in \cite{carmonadelarue-mffbsde}. It should be noted that there is a similar but distinct way to analyze the infinite-player limit of large-population games, leading to the optimal control of stochastic dynamics of McKean-Vlasov type. Early forms of a stochastic maximum principle for this new type of control problem were given in \cite{and-djehiche-maximum,buckdahndjehicheli-general,oksendal-meanfield}.
A general form of this principle was given in \cite{carmonadelarue-mkv} where it was applied to the solution of the control problem. A comparison of these two asymptotic regimes is given in \cite{carmonadelarue-mkvvsmfg}.

The aim of this paper is to present a new probabilistic approach to the analysis of mean field games with uncontrolled diffusion coefficients. Assuming $\sigma = \sigma(t,x)$ contains neither a mean field term nor a control, we obtain a general existence result. Under stronger assumptions, we prove a modest extension of the uniqueness result of Lasry and Lions \cite{lasrylionsmfg}. Finally, we provide a construction of approximate Nash equilibria for finite-player games in the spirit of \cite{carmonadelarue-mfg}, in the case that $b$ has no mean field term.

Our analysis is based on the \emph{weak formulation} of stochastic optimal control problems, sometimes known as the \emph{martingale approach}; see for example, \cite{davismartingale, bsdefinance, pengcontrol}. This approach depends heavily on the non-degeneracy of $\sigma$ and its independence of the control, and in our case, it is also important that $\sigma$ has no mean field term. The \emph{strong} formulation of the problem, as in \cite{carmonadelarue-mfg}, would require that the state SDEs have strong solutions when controls are applied. The two formulations are compared in Remark \ref{strongformulation}. One of the main conveniences of the weak formulation is that weak existence and uniqueness of the state SDE require much less regularity in the coefficients, which are allowed to be path-dependent and merely measurable in the state variable. Also, the value function solves a backward stochastic differential equation (BSDE), and necessary and sufficient conditions for the optimality of a control follow easily from the comparison principle for BSDEs. This method is discussed by El Karoui and Quenez in \cite{bsdefinance}, Peng in \cite{pengcontrol}, and perhaps most thoroughly by Hamadene and Lepeltier in \cite{hamadenelepeltier-control}.

Our results allow for the mean field interaction (at least in the running reward function $f$) to occur through the control processes in addition to the state processes. This seems quite important for many practical applications and has received very little attention thusfar in the literature of mean field games. A very recent paper of Gomes and Voskanyan \cite{gomes-extendedmfg} uses PDE methods to study these types of interactions in the deterministic case, $\sigma \equiv 0$, under the name \emph{extended mean field games}. Under strong continuity and convexity assumptions, they obtain existence as well as some regularity of the solutions, and interestingly they are able to allow for general dependence of the running objective $f$ on the \emph{joint} law of the state and control processes. Our setting is very different: notably $\sigma > 0$, and our convexity and continuity assumptions are much weaker.

We also allow for very general nonlocal mean field interactions, including but not limited to weakly or Wasserstein continuous functionals. Among the natural interactions that have not yet been addressed in the mean field games literature which we are able to treat, we mention the case of coefficients which depend on the rank (Example \ref{example-rank} in Section \ref{section-discussion}), or on the mean field of the individual's nearest neighbors (Section \ref{section-flocking}). Our framework also includes models with different types of agents, similar to \cite{huangmfg1}. Moreover, $f$ does not need to be strictly convex, and may in fact be identically zero. A final novelty of our results worth emphasizing is that they apply in non-Markovian settings and require no continuity in the state variable.

For the sake of illustration, we present two applications which had been touted as models for mean field games, without being solved in full generality. First we study price impact models in which asset price dynamics depend naturally on the \emph{rates of change} of investors' positions, inspired by the model of Carlin et al. \cite{carlinloboviswanathan}.
As a second application of our theoretical results, we discuss a model of flocking proposed by Nourian et al. in \cite{nourian-cuckersmalemfg1} to provide a mechanism by which flocking behavior emerges as an equilibrium, as a game counterpart of the well-known Cucker-Smale model, \cite{cuckersmale}. In \cite{nourian-cuckersmalemfg1}, the authors identify the mean field limit and, \emph{under the assumption that there exists a unique solution to the limiting mean field game}, construct approximate Nash equilibria for the finite-player games. While flocking is often defined mathematically as a large time phenomenon (case in point, the stationary form of the mean field game strategy is considered in \cite{nourian-cuckersmalemfg1}), we treat the finite horizon case to be consistent with the set-up of the paper, even though this case is most often technically more challenging. We provide existence and approximation results for both their model and two related nearest-neighbor models.

This paper is organized as follows. We introduce the two practical applications in Section \ref{section-applications}. The price impact models of Section \ref{section-priceimpact} motivate the analysis of mean field games in which players interact through their controls, while Section \ref{section-flocking} describes the flocking model of \cite{nourian-cuckersmalemfg1} as well as two related nearest-neighbor models. Then, Section \ref{section-mfg} provides precise statements of the assumptions used throughout the paper and the main existence and uniqueness results. Section \ref{section-finite} explains the construction of approximate Nash equilibria for the finite-player game. The assumptions of the main theorems are discussed in more detail in Section \ref{section-discussion}, along with important examples. In Section \ref{section-applicationsrevisited} the general theory is specialized to the applications of Section \ref{section-applications}. The proofs of the main theorems of Sections \ref{section-mfg} and \ref{section-finite} are given in Sections \ref{section-mfgproof} and \ref{section-finiteproof}, respectively.

\section{Applications} \label{section-applications}
\subsection{Price impact models} \label{section-priceimpact}
To motivate our generalization of the class of mean field games worthy of investigation, we present a simple multi-agent model of price impact which leads to mean field interaction through the control processes. The model is along the lines of Almgren and Chriss's model \cite{chriss-almgren} for price impact, or rather its natural extension to an $n$-player competitive game given by Carlin, Lobo, and Viswanathan in \cite{carlinloboviswanathan}. The latter model is highly tractable, modeling a flat order book from which each agent must execute a fixed order. We instead model a nonlinear order book and use fairly general reward functions. See \cite{alfonsifruthschied,gatheralschiedslynko-transient} for a discussion of order book mechanics as well as a discussion of resilience, a concept we do not address. In our model, after each trade, the order book reconstructs itself instantly around a new mid-price $S_t$, and with the same shape. At each time $t$, each agent faces a cost structure given by the same transaction cost curve $c : \R \rightarrow [0,\infty]$, which is convex and satisfies $c(0) = 0$. We consider only order books with finite volume; an infinite value for $c(\alpha)$ simply means that the volume $\alpha$ is not available. Flat order books are common in the literature, though not realistic: they correspond to quadratic transaction costs $c$.

We work on a filtered probability space $(\Omega, \F, \mathbb{F}=(\F_t)_{t \in [0,T]},P)$ supporting $n+1$ independent Wiener processes, $W^1,\ldots,W^n$ and $B$. Let $S$ denote the asset price, $K^i$ the cash of agent $i$, and $X^i$ his position. Each agent controls his trading rate $\alpha^i_t$ and his position evolves according to
\begin{align*}
dX^i_t &= \alpha^i_tdt + \sigma dW^i_t.
\end{align*}
The noise term $\sigma dW^i_t$ models a random stream of demand that a broker may receive from his clients.
If a single agent $i$ places a market order of $\alpha^i_t$ when the mid-price is $S_t$, the transaction costs him $\alpha^i_t S_t + c(\alpha^i_t)$. Hence, the changes in cash of agent $i$ are naturally given by
\begin{align*}
dK^i_t &= -(\alpha^i_tS_t + c(\alpha^i_t))dt.
\end{align*}
Assuming $c$ is differentiable on its domain, the marginal price per share of this trade is $S_t + c'(\alpha^i_t)$, meaning that the agent receives all of the volume on the order book between the prices $S_t$ and $S_t + c'(\alpha^i_t)$. The order book should recenter somewhere in this price range, say at $S_t + \gamma c'(\alpha^i_t) / n$, where $\gamma > 0$. The factor of $1/n$ is irrelevant when $n$ is fixed, but it is the right scaling factor for obtaining a mean field approximation.

In a continuous-time, continuous-trading model with multiple agents, it is not clear how simultaneous trades should be handled. Somewhat more realistic are continuous-time, discrete-trade models, which many continuous-trade models are designed to approximate. In a continuous-time, discrete-trade model, it is reasonable to assume that agents never trade simultaneously, given that there is a continuum of trade times to choose from. We choose to model this in our continuous-trade setting in the following manner: When the $n$ agents trade at rates $\alpha^1_t,\ldots,\alpha^n_t$ at time $t$, agent $i$ still pays $\alpha^i_t S_t + c(\alpha^i_t)$, but the total change in price is
\[
\frac{\gamma}{n}\sum_{i=1}^nc'(\alpha^i_t).
\]
Finally, the mid-price is modeled as an underlying martingale plus a drift representing a form of permanent price impact:
\[
dS_t = \frac{\gamma}{n}\sum_{i=1}^nc'(\alpha^i_t)dt + \sigma_0dB_t.
\]
Note that the particular case $c(\alpha)=\alpha^2$ corresponds to the influential Almgren-Chriss model \cite{chriss-almgren}. The wealth $V^i_t$ of agent $i$ at time $t$, as pegged to the mid-price, is given by $V^i_0 + X^i_tS_t + K^i_t$, which leads to the following dynamics:
\begin{align}
dV^i_t = \left(\frac{\gamma}{n}\sum_{j=1}^nc'(\alpha^j_t)X^i_t - c(\alpha^i_t)\right)dt + \sigma_0X^i_tdB_t + \sigma S_t dW^i_t. \label{fo:dvit}
\end{align}
We assume that the agents are risk-neutral and seek to maximize their expected terminal wealths at the end of the trading period, including some agency costs given by functions $f$ and $g$, so that the objective of agent $i$ is to maximize:
\[
J^i = \E\left[ V^i_T - \int_0^Tf(t,X^i_t)dt - g(X^i_T) \right]. 
\]
Price impact models are most often used in optimal execution problems for high frequency trading. Because of their short time scale, the fact that $S_t$ as defined above can become negative is not an issue in practice. In these problems, one often chooses $g(x) = mx^2$ for some $m > 0$ in order to penalize left over inventory. The function $f$ is usually designed to provide an incentive for tracking a benchmark, say the frequently used market volume weighted average price (VWAP) and a penalty slippage.

If the control processes are square integrable and the cost function $c$ has at most quadratic growth, the volumes $X^i_t$ and the transaction price $S_t$ are also square integrable and the quadratic variation terms in \eqref{fo:dvit} are true martingales. So after using It\^{o}'s formula we find
\[
J^i = \E\left[\int_0^T\left(\frac{\gamma}{n}\sum_{j=1}^nc'(\alpha^j_t)X^i_t - c(\alpha^i_t) - f(t,X^i_t)\right)dt - g(X^i_T)\right].
\]
Treating $X^i$ as the state processes, this problem is of the form described in the introduction. The general theory presented in the sequel will apply to this model under modest assumptions on the functions $c$, $f$, and $g$, ensuring existence of approximate Nash equilibria. Intuitively, when $n$ is large, a single agent may ignore his price impact without losing much in the way of optimality. This model could be made more realistic in many ways, but we believe any improvement will preserve the basic structure of the price impact, which naturally depends on the mean field of the control processes. It should be mentioned that the risk-neutrality assumption is crucial and hides a much more difficult problem. Without risk-neutrality, we would have to keep track of $V$ and $S$ as state processes. More importantly, the Brownian motion $B$ would not disappear after taking expectations, and this would substantially complicate the mean field limit. 

\subsection{Flocking models} \label{section-flocking}
The position $X^i_t$ and velocity $V^i_t$ of individual $i$ change according to
\begin{align*}
dX^i_t &= V^i_tdt, \\
dV^i_t &= \alpha^i_tdt + \sigma dW^i_t,
\end{align*}
where $\alpha^i_t$ is the individual's acceleration vector, $W^i$ are independent $d$-dimensional Wiener processes, and $\sigma > 0$ is a $d \times d$ matrix (usually $d = 2$ or $d=3$). The objective of individual $i$  is to choose $\alpha^i$ to minimize
\begin{align}
\E\left[\int_0^T|\alpha^i_t|_R^2 + \left|\frac{1}{n}\sum_{j=1}^n(V^j_t - V^i_t)\phi(|X^j_t - X^i_t|)\right|_Q^2\,dt\right]. \label{flocking-objective}
\end{align}
Here, $\phi : [0,\infty) \rightarrow [0,\infty)$ is a nonincreasing function, and $|x|_Q := x^\top Qx$ and $|x|_R := x^\top Rx$ for $x \in \R^d$, where $Q$ and $R$ are positive semidefinite $d \times d$ matrices. The $|\alpha^i_t|_R^2$ term penalizes too rapid an acceleration, while the second term provides an incentive for an individual to align his velocity vector with the average velocity of the flock. The weights $\phi(|X^j_t - X^i_t|)$ emphasize the velocities of nearby (in position) individuals more than distant ones. In \cite{nourian-cuckersmalemfg1}, drawing inspiration from \cite{cuckersmale}, $\phi$ is of the form
\begin{align}
\phi(x) = c\left(1 + x^2\right)^{-\beta}, \ \ \beta \ge 0, \ c > 0. \label{cs-weight}
\end{align}

Our existence and approximation results apply to the model above as well as a related model in which the weights in \eqref{flocking-objective} take a different form. Namely, individual $i$ may give non-zero weight only to those individuals it considers to be neighbors, where the set of neighbors may be determined in two different ways. \emph{Nearest neighbor rules} pre-specify a radius $r > 0$, and an individual $i$'s neighbors at time $t$ are those individuals $j$ with $|X^j_t - X^i_t| \le r$. Letting $N^i_t$ denote the set of such $j$ and $|N^i_t|$ its cardinality, the objective function is
\begin{align}
\E\left[\int_0^T|\alpha^i_t|_R^2 + \left|\frac{c}{|N^i_t|}\sum_{j \in N^i_t}(V^j_t - V^i_t)\right|_Q^2\,dt\right]. \label{flocking-objective-nn}
\end{align}
This is inspired by what is now known as Vicsek's model, proposed in \cite{vicsek} and studied mathematically in \cite{jadbabaie-coordination}. On the other hand, recent studies such as \cite{ballerini-interaction} provide evidence that birds in flocks follow so-called \emph{k-nearest neighbor rules}, which track only a fixed number $k \le n$ of neighbors at each time. The corresponding objective function is the same, if we instead define $N^i_t$ to be the set of indices $j$ of the $k$ closest individuals to $i$ (so of course $|N^i_t| = k$). Note that there are no ``ties''; that is, for each distinct $i,j,l \le n$ and $t > 0$, we have $P(|X^i_t-X^j_t| = |X^i_t-X^l_t|) = 0$.

\section{Mean field games} \label{se-mfg}
\label{section-mfg}
We turn now to a general discussion of the mean field game models which we consider in this paper. We collect the necessary notation and assumptions in order to state the main existence, uniqueness, and approximation theorems.

\subsection{Construction of the mean field game} \label{section-construction}
Let $\B(E,\tau)$ denote the Borel $\sigma$-field of a topological space $(E,\tau)$. When the choice of topology is clear, we use the abbreviated form $\B(E)$. For a measurable space $(\Omega,\F)$, let $\P(\Omega)$ denote the set of probability measures on $(\Omega,\F)$. We write $\mu \ll \mu'$ when $\mu$ is absolutely continuous with respect to $\mu'$, and $\mu \sim \mu'$ when the measures are equivalent. Given a measurable function $\psi : \Omega \rightarrow [1,\infty)$, we set:
\begin{align*}
\P_\psi(\Omega) &= \left\{\mu \in \P(\Omega) : \int \psi \,d\mu < \infty \right\}, \\
B_\psi(\Omega) &= \left\{f : \Omega \rightarrow \R \text{ measurable, } \sup_\omega|f(\omega)|/\psi(\omega) < \infty \right\}.
\end{align*}
We define $\tau_\psi(\Omega)$ to be the weakest topology on $\P_\psi(\Omega)$ making the map $\mu \mapsto \int f\,d\mu$ continuous for each $f \in B_\psi(\Omega)$. The space $(\P_\psi(\Omega),\tau_\psi(\Omega))$ is generally neither metrizable nor separable, which will pose some problems. We define the empirical measure map  $e_n : \Omega^n \rightarrow \P(\Omega)$ by
\[
e_n(\omega_1,\ldots,\omega_n) = \frac{1}{n}\sum_{j=1}^n\delta_{\omega_j}.
\]
Notice that $e_n$ need not be $\B(\P_\psi(\Omega),\tau_\psi(\Omega))$-measurable, but this will not be an issue.

\begin{definition} \label{measurable}
Given measurable spaces $E$ and $F$, we say that a function $f : \P(\Omega) \times E \rightarrow F$ is \emph{empirically measurable} if
\[
\Omega^n \times E \ni (x,y) \mapsto f(e_n(x),y) \in F
\]
is jointly measurable for all $n \ge 1$.
\end{definition}

Let $\C := C([0,T];\R^d)$ be the space of $\R^d$-valued continuous functions on $[0,T]$ endowed with the sup-norm $\|x\| := \sup_{s \in [0,T]}|x(s)|$ and fix a Borel measurable function $\psi : \C \rightarrow [1,\infty)$ throughout. It will play a role similar to the ``Lyapunov-like'' function of G\"{a}rtner \cite{gartnermkv}, controlling a tradeoff between integrability and continuity requirements. Some comments on the choice of $\psi$ follow in Remark \ref{remark-psi}. For any $\mu \in \P(\C)$ and $t\in[0,T]$, the marginal $\mu_t$ denotes the image of $\mu$ under the coordinate map $\C \ni x \mapsto x_t \in \R^d$.

We use the notation $\lambda_0 \in \P(\R^d)$ for the initial distribution of the infinitely many players' state processes. Let $\Omega := \R^d \times \C$, define $\xi(x,\omega) := x$ and $W(x,\omega) := \omega$, and let $P$ denote the product of $\lambda_0$ and the Wiener measure, defined on $\B(\Omega)$. Define $\F_t$ to be the completion of $\sigma((\xi, W_s) : 0 \le s \le t)$ by $P$-null sets of $\B(\Omega)$, and set $\mathbb{F} := (\F_t)_{0 \le t \le T}$. We work with the filtered probability space $(\Omega,\F_T, \mathbb{F}, P)$ for the remainder of the section. For $k \in \N$ and $q \ge 1$ define the space $\mathbb{H}^{q,k}$ to be the set of progressively measurable $h : [0,T] \times \Omega \rightarrow \R^k$ satisfying
\[
\E\left[\left(\int_0^T|h_t|^2dt\right)^{q/2}\right] < \infty.
\]
For a martingale $M$, we denote by $\mathcal{E}(M)$ its Doleans stochastic exponential.
We now state assumptions on the data which will stand throughout the paper. Unless otherwise stated, $\P_\psi(\C)$ is equipped with the topology $\tau_\psi(\C)$.

The following assumptions \textbf{(S)} are implicitly assumed throughout the paper.
\begin{A1}[Standing assumptions] {\ }
\begin{enumerate}[(S.1)]
\item The \textit{control space} $A$ is a compact convex subset of a normed vector space, and the set $\A$ of \textit{admissible controls} consists of all progressively measurable $A$-valued processes. The volatility $\sigma : [0,T] \times \C \rightarrow \R^{d \times d}$ is progressively measurable. The drift $b : [0,T] \times \C \times \P_\psi(\C) \times A \rightarrow \R^d$ is such that $(t,x) \mapsto b(t,x,\mu,a)$ is progressively measurable for each $(\mu,a)$, and $a \mapsto b(t,x,\mu,a)$ is continuous for each $(t,x,\mu)$.
\item There exists a unique strong solution $X$ of the driftless state equation
\begin{align}
dX_t = \sigma(t,X)dW_t, \ \ X_0 = \xi, \label{stateSDE}
\end{align}
such that $\E[\psi^2(X)] < \infty$, $\sigma(t,X) > 0$ for all $t \in [0,T]$ almost surely, and $\sigma^{-1}(t,X)b(t,X,\mu,a)$ is uniformly bounded.
\end{enumerate}
\end{A1}

We will elaborate on these and the subsequent assumptions in Section \ref{section-applicationsrevisited} below, but for now let us make a few remarks. If $\sigma$ has linear growth, $\psi(x) = 1 + \|x\|^p$, and $\int_{\R^d}|x|^{2p}\lambda_0(dx) < \infty$, then indeed $\E[\psi^2(X)] < \infty$.
Compactness of $A$ is a strong assumption which will be used in several places, in particular to ensure that $\P(A)$ is compact.
Boundedness of $\sigma^{-1}b$ is also restrictive, but it will be crucial to ensure that the Hamiltonian is a uniformly Lipschitz function of the adjoint variable. See Remark \ref{re:bounded} for more details and some comments about relaxing these assumption.

From now on, $X$ denotes the unique solution of (\ref{stateSDE}). For each $\mu \in \P_\psi(\C)$ and $\alpha \in \A$, define a measure $P^{\mu,\alpha}$ on $(\Omega,\F_T)$ by
\[
\frac{dP^{\mu,\alpha}}{dP} = \mathcal{E}\left( \int_0^\cdot\sigma^{-1}b\left(t,X,\mu,\alpha_t\right) dW_t\right)_T .
\]
By Girsanov's theorem and boundedness of $\sigma^{-1}b$, the process $W^{\mu,\alpha}$ defined by
\[
W^{\mu,\alpha}_t := W_t - \int_0^t\sigma^{-1}b\left(s,X,\mu,\alpha_s\right)ds
\]
is a Wiener process under $P^{\mu,\alpha}$, and
\[
dX_t = b\left(t,X,\mu,\alpha_t\right)dt + \sigma(t,X) dW^{\mu,\alpha}_t.
\]
That is, under $P^{\mu,\alpha}$, $X$ is a weak solution of the state equation. Note that $P^{\mu,\alpha}$ and $P$ agree on $\F_0$; in particular, the law of $X_0 = \xi$ is still $\lambda_0$. Moreover, $\xi$ and $W$ remain independent under $P^{\mu,\alpha}$.

\begin{remark} \label{filtration}
It is well-known that the nonsingularity assumption (S.2) of $\sigma$ guarantees that $\mathbb{F}$ coincides with the completion of the filtration generated by $X$. It is thus implicit in the definition of $\A$ that our admissible controls can be written in \emph{closed-loop} form, that is as deterministic functions of $(t,X)$.
\end{remark}

We now state the assumptions on the \emph{reward} functions entering the objectives to be maximized by the players. Throughout, $\P(A)$ is endowed with the weak topology and its corresponding Borel $\sigma$-field.
\begin{enumerate}
\item[(S.3)] The running reward $f : [0,T] \times \C \times \P_\psi(\C) \times \P(A) \times A \rightarrow \R$ is such that $(t,x) \mapsto f(t,x,\mu,q,a)$ is progressively measurable for each $(\mu,q,a)$ and $a \mapsto f(t,x,\mu,q,a)$ is continuous for each $(t,x,\mu,q)$. The terminal reward function $g : \C \times \P_\psi(\C) \rightarrow \R$ is such that $x \mapsto g(x,\mu)$ is Borel measurable for each $\mu$.
\item[(S.4)] There exist $c > 0$ and an increasing function $\rho : [0,\infty) \rightarrow [0,\infty)$ such that
\[
|g(x,\mu)| + |f(t,x,\mu,q,a)| \le c\left(\psi(x) + \rho\left(\int \psi \, d\mu\right)\right), \quad \forall(t,x,\mu,q,a).
\]
Since $\psi \ge 1$, this is equivalent to the same assumption but with $\psi$ replaced by $1 + \psi$.
\item[(S.5)] The function $f$ is of the form
\[
f(t,x,\mu,q,a) = f_1(t,x,\mu,a) + f_2(t,x,\mu,q).
\]
\end{enumerate}

\begin{remark} \label{re:s5}
The only restrictive assumption among (S.3-5) is (S.5). Combined with the assumption that $b$ does not depend on $q$, assumption (S.5) renders the maximizer(s) of the Hamiltonian independent of the $\P(A)$ argument. Separation assumptions of this sort are common in mean field games literature, largely for this reason (c.f. \cite{lasrylionsmfg}).
\end{remark}

Given a measure $\mu \in \P_\psi(\C)$, a control $\alpha \in \A$, and a measurable map $[0,T] \ni t \mapsto q_t \in \P(A)$, we define the associated expected reward by
\[
J^{\mu,q}(\alpha) := \E^{\mu,\alpha} \left[\int_0^Tf(t,X,\mu,q_t,\alpha_t)dt + g(X,\mu) \right]
\]
where $\E^{\mu,\alpha}$ denotes expectation with respect to the measure $P^{\mu,\alpha}$. Considering $\mu$ and $q$ as fixed, we are faced with a standard stochastic optimal control problem, the value of which is given by
\[
V^{\mu,q} = \sup_{\alpha \in \A}J^{\mu,q}(\alpha).
\]

\begin{definition}
We say a measure $\mu \in \P_\psi(\C)$ and a measurable function $q : [0,T] \rightarrow \P(A)$ form a \emph{solution of the MFG} if there exists $\alpha \in \A$ such that $V^{\mu,q} = J^{\mu,q}(\alpha)$, $P^{\mu,\alpha} \circ X^{-1} = \mu$, and $P^{\mu,\alpha} \circ \alpha_t^{-1} = q_t$ for almost every $t$.
\end{definition}

\subsection{Existence and uniqueness} 
\label{section-existenceuniqueness}
Some additional assumptions are needed for the existence and uniqueness results. Define the Hamiltonian $h : [0,T] \times \C \times \P_\psi(\C) \times \P(A) \times \R^d \times A \rightarrow \R$, the maximized Hamiltonian $H : [0,T] \times \C \times \P_\psi(\C) \times \P(A) \times \R^d \rightarrow \R$, and the set on which the supremum is attained by
\begin{align}
h(t,x,\mu,q,z,a) &:= f(t,x,\mu,q,a) + z \cdot \sigma^{-1}b(t,x,\mu,a), \nonumber \\
H(t,x,\mu,q,z) &:= \sup_{a \in A}h(t,x,\mu,q,z,a), \label{hdef} \\
A(t,x,\mu,q,z) &:= \{a \in A : h(t,x,\mu,q,z,a) = H(t,x,\mu,q,z)\}, \nonumber
\end{align}
respectively. Note that $A(t,x,\mu,q,z)$ does not depend on $q$, in light of assumption (S.5), so we shall often drop $q$ from the list of arguments of $A$ and use the notation $A(t,x,\mu,z)$. Note also that $A(t,x,\mu,z)$ is always nonempty, since $A$ is compact and $h$ is continuous in $a$ by assumptions (S.1) and (S.3).

\begin{A41}
For each $(t,x,\mu,z)$, the set $A(t,x,\mu,z)$ is convex.
\end{A41}

It will be useful to have notation for the driftless law and the set of equivalent laws:
\begin{align*}
\X &:= P \circ X^{-1} \in \P_\psi(\C) \\
\P_X &:= \left\{\mu \in \P_\psi(\C) : \mu \sim \X\right\}.
\end{align*}
\begin{A2}[Existence assumptions] 
For each $(t,x) \in [0,T]\times \C$ the following maps are sequentially continuous, using $\tau_\psi(\C)$ on $\P_X$ and the weak topology on $\P(A)$:
\begin{alignat*}{5}
\P_X \times A &\ni (\mu,a) \; &\mapsto \; &b(t,x,\mu,a), \\
\P_X \times \P(A) \times A &\ni (\mu,q,a) \; &\mapsto \; &f(t,x,\mu,q,a), \\
\P_X &\ni \mu \; &\mapsto \; &g(x,\mu).
\end{alignat*}
\end{A2}

\begin{theorem} \label{existence}
Suppose (E) and (C) hold. Then there exists a solution of the MFG.
\end{theorem}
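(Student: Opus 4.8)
The plan is to reformulate the existence of an MFG solution as a fixed-point problem for a set-valued map and to apply the Kakutani--Fan--Glicksberg theorem. The weak formulation is what makes this feasible: once $\mu\in\P_\psi(\C)$ and the flow $q$ are frozen, the inner optimization is a standard stochastic control problem in weak form, and by (S.2) the maximized Hamiltonian $H$ is uniformly Lipschitz in the adjoint variable, so the associated backward equation is well posed.

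\textbf{Step 1 (inner problem via BSDE).} For frozen $(\mu,q)$, by (S.4) and $\E[\psi^2(X)]<\infty$ the generator $(t,z)\mapsto H(t,X,\mu,q_t,z)$ has square-integrable data and is uniformly Lipschitz in $z$, so there is a unique solution $(Y^{\mu,q},Z^{\mu,q})$ of
\[
Y_t=g(X,\mu)+\int_t^T H(s,X,\mu,q_s,Z_s)\,ds-\int_t^T Z_s\,dW_s,
\]
with $Y^{\mu,q}_0=V^{\mu,q}$; by the comparison theorem for BSDEs, $\alpha\in\A$ is optimal for $J^{\mu,q}$ if and only if $\alpha_t\in A(t,X,\mu,Z^{\mu,q}_t)$ for $dt\otimes dP$-a.e.\ $(t,\omega)$ (cf.\ \cite{bsdefinance,hamadenelepeltier-control}). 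Nonemptiness of $A(t,x,\mu,z)$ and continuity of $h$ in $a$ give, by measurable selection, at least one optimal control, and assumption (C) makes the set $\widehat\A(\mu,q)$ of all optimal controls convex.

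\textbf{Step 2 (the domain).} For the measure coordinate I would work on $\K_M:=\{\mu\in\P(\C):\mu\ll\X,\ \|d\mu/d\X\|_{L^2(\X)}\le M\}$. Since $\E[\psi^2(X)]<\infty$ we have $B_\psi(\C)\subseteq L^2(\X)$; hence on $\K_M$ the topology $\tau_\psi(\C)$ is coarser than the weak $L^2(\X)$-topology on the norm-bounded (hence weakly compact and metrizable) set of densities, and being Hausdorff it coincides with it there. Thus $\K_M$ is convex, $\tau_\psi$-compact, and metrizable, which sidesteps the non-metrizability of $\tau_\psi(\C)$ flagged earlier. Boundedness of $\sigma^{-1}b$ and Dol\'eans estimates give $\E_P[(dP^{\mu,\alpha}/dP)^2]\le e^{\|\sigma^{-1}b\|_\infty^2 T}=:C$, and since $dP^{\mu,\alpha}/dP$ is $\sigma(X)$-measurable (Remark \ref{filtration}), $P^{\mu,\alpha}\circ X^{-1}$ has density in the $L^2(\X)$-ball of radius $C^{1/2}$; so I take $M:=C^{1/2}$. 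For the control coordinate I would use the set $\L$ of measurable flows $[0,T]\ni t\mapsto q_t\in\P(A)$ with the stable topology, which is convex, compact, and metrizable because $A$ is compact. Then define
\[
\Psi(\mu,q):=\bigl\{\bigl(P^{\mu,\alpha}\circ X^{-1},\,(P^{\mu,\alpha}\circ\alpha_t^{-1})_{t\le T}\bigr):\alpha\in\widehat\A(\mu,q)\bigr\}\subseteq\K_M\times\L,
\]
so that a point $(\mu,q)$ with $(\mu,q)\in\Psi(\mu,q)$ is precisely a solution of the MFG.

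\textbf{Step 3 (fixed point and the main obstacle).} I would then verify the hypotheses of Kakutani--Fan--Glicksberg for $\Psi$ on $\K_M\times\L$: nonempty values (Step 1); convex values, obtained from (C) and most conveniently recorded by realizing the attainable joint laws on path-and-control space, where the controlled martingale problem and the first-order condition are linear constraints; closed values; and closed graph. The closed-graph property is the crux. Given $\mu_n\to\mu$ in $\K_M$, $q_n\to q$ in $\L$, and $\alpha_n\in\widehat\A(\mu_n,q_n)$ with outputs converging, one must (i) prove a stability estimate $Z^{\mu_n,q_n}\to Z^{\mu,q}$ for the BSDE solution map, driven by the \emph{sequential} continuity of $b,f,g$ in the measure arguments from assumption (E) together with (S.4) for the requisite dominated convergence; (ii) pass to the limit in the first-order condition using upper hemicontinuity of $(x,\mu,z)\mapsto A(t,x,\mu,z)$ (Berge) and (C) to produce a \emph{strict} limiting optimal control for $(\mu,q)$; and (iii) pass to the limit in the Girsanov densities and in the control marginals. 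A minor additional point is that the limit $\mu$ need not be equivalent to $\X$, so that $\P_X$ (the domain of (E)) is not closed, which is dealt with by an approximation argument. With all hypotheses in place, Kakutani--Fan--Glicksberg produces a fixed point of $\Psi$, i.e.\ a solution of the MFG.
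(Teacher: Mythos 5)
Your Steps 1 and 2 follow the paper's strategy closely (the paper's domain $\Q$ additionally imposes $\int (d\X/d\mu)\,d\X \le M$, which is exactly what guarantees that every point of the domain is equivalent to $\X$, so that (E) applies there; your $\K_M$ only forces $\mu \ll \X$, and your promised ``approximation argument'' for limit points not equivalent to $\X$ is left unexplained, whereas the paper simply builds the reverse-density bound into the compact convex domain). The genuine gap, however, is in Step 3: Kakutani--Fan--Glicksberg needs $\Psi$ to have \emph{convex values}, and the values $\Psi(\mu,q)$ are images of the convex set of optimal controls under the map $\alpha \mapsto (P^{\mu,\alpha}\circ X^{-1}, (P^{\mu,\alpha}\circ\alpha_t^{-1})_t)$, which is not affine (the density $dP^{\mu,\alpha}/dP$ is a stochastic exponential in $\alpha$), so convexity does not follow from (C). It genuinely fails: take $b(t,x,\mu,a)=a$, $A=[-1,1]$, $f\equiv 0$, $g$ constant, so $Z\equiv 0$ and every admissible control is optimal. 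With $\alpha^1\equiv 1$ and $\alpha^2\equiv -1$, the mixture $\lambda P^{\mu,\alpha^1}\circ X^{-1}+(1-\lambda)P^{\mu,\alpha^2}\circ X^{-1}$ is the law of a diffusion whose drift is an $(-1,1)$-valued functional of the path, while the mixed control marginal is $\lambda\delta_1+(1-\lambda)\delta_{-1}$; no strict closed-loop control (recall controls must be adapted to the filtration of $X$, so no external randomization is available) can produce this pair, since its drift would have to be $\{\pm 1\}$-valued. Your proposed fix --- passing to joint laws on path-and-control space where the martingale problem and the first-order condition are linear --- convexifies the set only by enlarging it to \emph{relaxed} controls; a fixed point of that enlarged map is not a solution of the MFG as defined (which requires $q_t = P^{\mu,\alpha}\circ\alpha_t^{-1}$ for a strict $\alpha\in\A$, and this strictness is also what the approximate-Nash construction uses), and (C) (convexity of the argmax set $A(t,x,\mu,z)$ in $A$) does not give the Filippov/Roxin-type convexity of $\{(b,f)(t,x,\mu,\cdot,a):a\in A(t,x,\mu,z)\}$ needed to return from a relaxed to a strict control while matching both the state law and the control marginals.

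This is precisely why the paper does not apply Kakutani to the composed correspondence. Instead it proves a generalization (Proposition \ref{fixedpointtheorem}, via Cellina's approximate-selection theorem plus Schauder) that requires convexity only of the optimal-control sets $\A(\mu,\nu)$ inside the normed space of control processes, together with upper hemicontinuity of $(\mu,\nu)\mapsto\A(\mu,\nu)$ (Lemma \ref{auhc}, resting on the Hu--Peng stability estimate $Z^{\mu^n,\nu^n}\to Z^{\mu,\nu}$ and Berge's theorem) and continuity of the single-valued, non-affine map $\Phi$ (proved via relative entropy and Pinsker's inequality). Your items (i)--(iii) correctly identify these continuity ingredients, but without replacing Kakutani--Fan--Glicksberg by an argument of this type (or otherwise establishing convexity of the attainable strict-control outputs, which the example above rules out), the fixed-point step of your proof does not go through.
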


\begin{remark} \label{sequential}
It is worth emphasizing that sequential continuity is often easier to check for $\tau_\psi(\C)$, owing in part to the failure of the dominated convergence theorem for nets. For example, functions like
\[
\mu \mapsto \int\int\phi(x,y)\mu(dx)\mu(dy)
\]
for bounded measurable $\phi$ are always sequentially continuous but may fail to be continuous.
\end{remark}

\begin{remark} \label{remark-psi}
The function $\psi$ enters the assumptions in two essential ways. On the one hand, the functions $b$, $f$, and $g$ should be $\tau_\psi(\C)$-continuous in their measure arguments as in (E). On the other hand, the solution of the SDE $dX_t = \sigma(t,X)dW_t$ should possess $\psi^2$-moments as in (S.2), and the growth of $f$ and $g$ should be controlled by $\psi$, as in (S.4). There is a tradeoff in the choice of $\psi$: larger $\psi$ makes the latter point more constraining and the former less constraining.
\end{remark}

The following uniqueness theorem is inspired by Lasry and Lions \cite{lasrylionsmfg}. They provide counterexamples to show that one should not expect uniqueness in much generality, unless one assumes that the time horizon is small and the coefficients are suitably Lipschitz (e.g. \cite{huangmfg1}).

\begin{A42} {\ }
\begin{enumerate}[(U.1)]
\item For each $(t,x,\mu,z)$, the set $A(t,x,\mu,z)$ is a singleton;
\item $b = b(t,x,a)$ has no mean field term;
\item $f(t,x,\mu,a) = f_1(t,x,\mu) + f_2(t,\mu,q) + f_3(t,x,a)$ for some $f_1$, $f_2$, and $f_3$;
\item For all $\mu, \mu' \in \P_\psi(\C)$, 
\begin{align*}
\!\!\!\!\!\!\!\!\!\!\!\!\!\int_\C\left[g(x,\mu)-g(x,\mu') + \int_0^T\left(f_1(t,x,\mu)-f_1(t,x,\mu')\right)dt\right](\mu-\mu')(dx) \le 0.
\end{align*}
\end{enumerate}
\end{A42}

\begin{theorem} \label{uniqueness}
Suppose (U) holds. Then there is at most one solution of the MFG.
\end{theorem}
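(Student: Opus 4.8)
The plan is to run the classical monotonicity argument of Lasry and Lions \cite{lasrylionsmfg}, transcribed into the weak formulation. Let $(\mu,q)$ and $(\mu',q')$ be two solutions of the MFG, witnessed by optimal controls $\alpha$ and $\alpha'$: thus $P^{\mu,\alpha}\circ X^{-1}=\mu$, $P^{\mu,\alpha}\circ\alpha_t^{-1}=q_t$ for a.e.\ $t$, and likewise with primes. The first point to record is that, by (U.2), the density $dP^{\mu,\alpha}/dP$ involves $b$ only through $b(t,X,\alpha_t)$ and hence does not depend on the measure argument; I will write $P^\alpha:=P^{\mu,\alpha}=P^{\mu',\alpha}$, with expectation $\E^\alpha$, and similarly $P^{\alpha'}$. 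In particular $J^{\mu,q}(\beta)=\E^\beta\!\left[\int_0^T f(t,X,\mu,q_t,\beta_t)\,dt+g(X,\mu)\right]$ for every admissible $\beta$, the measure $P^\beta$ being the same no matter which frozen data appear in $f$ and $g$.

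Next I would write the two optimality inequalities $J^{\mu,q}(\alpha)\ge J^{\mu,q}(\alpha')$ and $J^{\mu',q'}(\alpha')\ge J^{\mu',q'}(\alpha)$, add them, and expand $f$ according to (U.3), $f=f_1(t,x,\mu)+f_2(t,\mu,q)+f_3(t,x,a)$. The $f_2$ contribution is deterministic and enters each difference with opposite signs, so it cancels; the $f_3$ contribution depends on neither the measure nor the flow, so it survives each individual difference but cancels once the two differences are summed — this cancellation is where (U.2) is used a second time, since it is $\E^\alpha$ (not an $\mu$-dependent expectation) that appears in all four terms. Using $P^\alpha\circ X^{-1}=\mu$ and $P^{\alpha'}\circ X^{-1}=\mu'$ to rewrite the $f_1$- and $g$-terms as integrals against $\mu$ and $\mu'$, what remains is precisely
\[
0\;\le\;\int_\C\left[g(x,\mu)-g(x,\mu')+\int_0^T\bigl(f_1(t,x,\mu)-f_1(t,x,\mu')\bigr)dt\right](\mu-\mu')(dx).
\]
By (U.4) the right-hand side is $\le 0$, hence it is $0$; and since it is the sum of the two nonnegative quantities above, each of those vanishes as well. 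In particular $\alpha'$ is an optimal control for the frozen problem $V^{\mu,q}$.

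It remains to deduce $\alpha=\alpha'$. Here I would invoke the BSDE characterization of optimality in the weak formulation (\cite{bsdefinance,pengcontrol,hamadenelepeltier-control}): $V^{\mu,q}=Y_0$, where $(Y,Z)$ is the unique solution of the BSDE with terminal value $g(X,\mu)$ and generator $(t,z)\mapsto H(t,X,\mu,q_t,z)$, and a control $\beta$ is optimal if and only if $\beta_t\in A(t,X,\mu,Z_t)$ for $dt\otimes dP$-a.e.\ $(t,\omega)$. Since $\alpha$ and $\alpha'$ are both optimal for $V^{\mu,q}$ and $Z$ does not depend on the choice of optimizer, both lie in $A(t,X,\mu,Z_t)$ a.e.; but (U.1) forces this set to be a singleton, so $\alpha_t=\alpha'_t$ for $dt\otimes dP$-a.e.\ $(t,\omega)$. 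Consequently the densities $dP^\alpha/dP$ and $dP^{\alpha'}/dP$ coincide $P$-a.s. (using continuity of $b$ in $a$), so $P^\alpha=P^{\alpha'}$; and since $P\sim P^\alpha\sim P^{\alpha'}$, we conclude $\mu=P^\alpha\circ X^{-1}=P^{\alpha'}\circ X^{-1}=\mu'$ and, by Fubini, $q_t=P^\alpha\circ\alpha_t^{-1}=P^{\alpha'}\circ(\alpha'_t)^{-1}=q'_t$ for a.e.\ $t$.

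I expect the computation to be routine once the weak-formulation bookkeeping is in place; the one delicate step is the last one, the passage from ``$\alpha'$ optimal for $V^{\mu,q}$'' to ``$\alpha'=\alpha$'', which genuinely uses both the uniqueness of the adjoint process $Z$ and assumption (U.1). I would also need to check that all the expectations manipulated above are finite so the changes of variables are legitimate; this follows from the growth bound (S.4), the moment bound $\E[\psi^2(X)]<\infty$ of (S.2), and boundedness of the densities $dP^{\mu,\alpha}/dP$, exactly as in the integrability estimates underlying Theorem \ref{existence}.
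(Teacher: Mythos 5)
Your proof is correct, and it reaches the conclusion by a route that differs in its mechanics from the paper's. The paper never passes through the statement ``$\alpha'$ is optimal for the frozen problem $(\mu,q)$'': instead it writes the difference $Y^1-Y^2$ of the two value-function BSDEs under each of the two optimal measures, uses the pointwise inequalities $h(t,X,\mu^1,\nu^1_t,Z^2_t,\alpha^1_t)\le H(t,X,\mu^1,\nu^1_t,Z^2_t)$ (and its mirror image) together with (U.2)--(U.3) to bound the drivers, and then feeds the resulting two expressions for $\E[Y^1_0-Y^2_0]$ into (U.4); the strictness needed to conclude $\alpha^1=\alpha^2$ then comes directly from (U.1) applied to those same pointwise inequalities, with no appeal to a converse verification theorem. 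You instead run the Lasry--Lions computation at the level of the expected rewards $J^{\mu,q}$ (adding the two optimality inequalities, cancelling $f_2$ within each difference and $f_3$ across the sum, and using the fixed-point identities $P^\alpha\circ X^{-1}=\mu$, $P^{\alpha'}\circ X^{-1}=\mu'$), which is cleaner bookkeeping, but you then need the \emph{necessity} direction of the BSDE optimality criterion --- that any optimal control must lie in $A(t,X,\mu,Z^{\mu,\nu}_t)$ $dt\otimes dP$-a.e. --- which the paper only proves in the sufficient direction (Section \ref{section-mfgproof}). That converse is genuine and standard: it follows from the strict comparison theorem (Theorem 2.2 of \cite{bsdefinance}, which is exactly what your citation covers), since equality of $\E[Y^{\mu,\nu,\alpha'}_0]$ and $\E[Y^{\mu,\nu}_0]$ forces the drivers to agree along the solution a.e.; so your argument closes, but it imports this extra ingredient where the paper's BSDE-level argument gets the strictness for free. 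Two cosmetic points: $V^{\mu,q}=\E[Y^{\mu,\nu}_0]$ rather than $Y_0$ itself, since $\F_0$ is not trivial here; and the equality of the densities $dP^\alpha/dP=dP^{\alpha'}/dP$ needs only $\alpha=\alpha'$ $dt\otimes dP$-a.e. (so that the stochastic integrals coincide), not continuity of $b$ in $a$.
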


\begin{corollary} \label{existence-uniqueness}
Suppose (E) and (U) hold. Then there exists a unique solution of the MFG.
\end{corollary}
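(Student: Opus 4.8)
The plan is simply to combine the two preceding results, Theorem \ref{existence} and Theorem \ref{uniqueness}. The only thing that requires a (short) argument is that the hypotheses of the corollary imply the hypotheses of Theorem \ref{existence}; that is, that Assumption (U) entails Assumption (C). This is immediate from condition (U.1): it asserts that the set $A(t,x,\mu,z)$ is a singleton for every $(t,x,\mu,z)$, and a one-point set is trivially convex, so (C) holds. Consequently, the pair of hypotheses ``(E) and (U)'' is stronger than ``(E) and (C)'', and Theorem \ref{existence} applies to give the existence of at least one solution of the MFG.

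For the uniqueness half, Assumption (U) is precisely the hypothesis of Theorem \ref{uniqueness}, so that theorem directly yields that there is at most one solution of the MFG. Putting the two statements together, under (E) and (U) there exists exactly one solution, which is the assertion of the corollary. I do not expect any genuine obstacle here: the entire content is the observation (U.1) $\Rightarrow$ (C), and once that is recorded the result is a formal consequence of Theorems \ref{existence} and \ref{uniqueness}. The only thing to be slightly careful about in the write-up is to phrase the implication cleanly and to note explicitly that all of the standing assumptions (S) remain in force throughout, so that both theorems may be invoked without further checking.
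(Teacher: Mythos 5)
Your proposal is correct and matches the paper's (implicit) argument: the corollary is simply Theorem \ref{existence} combined with Theorem \ref{uniqueness}, where the only point to note is that (U.1) implies (C) since a singleton is convex. Nothing further is needed.
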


\begin{remark} \label{re:extrarandomness}
The following simple extension of the above formulation allows more heterogeneity among agents. Work instead on a probability space $\Omega = \Omega' \times \R^d \times \C$, where $\Omega'$ is some measurable space which will model additional time-zero randomness. We may then fix an initial law $\lambda_0 \in \P(\Omega' \times \R^d)$, and let $P$ be the product of $\lambda_0$ and Wiener measure. Letting $(\theta,\xi,W)$ denote the coordinate maps, we work with the filtration generated by the process $(\theta,\xi,W_s)_{0 \le s \le T}$. The data $b$, $\sigma$, $f$, and $g$ may all depend on $\theta$. In the finite-player game, the agents have i.i.d. initial data $(\theta^i,\xi^i)$, known at time zero, where $\xi^i$ is the initial state and $\theta^i$ can encode other differences between the agents. For example, in a price impact model, perhaps a fraction $\rho \in [0,1]$ of the agents need to liquidate but the rest do not; this can be modeled using such a $\theta$ which equals $c > 0$ with probability $\rho$ and $0$ otherwise, and setting $g(X,\theta) = \theta|X_T|^2$ for some $c > 0$. This generalization complicates the notation but changes essentially none of the analysis.
\end{remark}

\section{Approximate Nash equilibria for finite-player games} \label{section-finite}
Before proving these theorems, we discuss how a solution of the MFG may be used to construct an approximate Nash equilibrium for the finite-player game, using only distributed controls. Additional assumptions are needed for the approximation results:

\begin{A3} {\ }
\begin{enumerate}[(F.1)]
\item $b = b(t,x,a)$ has no mean field term;
\item For all $(t,x,\mu,q,a)$, $f(t,x,\mu,q,a) = f(t,x,\mu^t,q,a)$, where $\mu^t$ denotes the image of $\mu$ under the map $\C \ni x \mapsto x_{\cdot \wedge t} \in \C$;
\item The functions $b$, $f$, and $g$ are empirically measurable, as in Definition \ref{measurable}, using the progressive $\sigma$-field on $[0,T] \times \C$, and Borel $\sigma$-fields elsewhere;
\item For each $(t,x)$, the following functions are continuous at each point satisfying $\mu \sim \X$:
\begin{alignat*}{5}
\P_\psi(\C) \times \P(A) \times A &\ni (\mu,q,a) \; &\mapsto \; &f(t,x,\mu,q,a), \\
\P_\psi(\C) &\ni \mu \; &\mapsto \; &g(x,\mu);
\end{alignat*}
\item There exists $c > 0$ such that, for all $(t,x,\mu,q,a)$,
\[
|g(x,\mu)| + |f(t,x,\mu,q,a)| \le c\left(\psi(x) + \int \psi \, d\mu\right).
\]
\end{enumerate}
\end{A3}

\begin{remark}
The continuity assumption (F.4) is stronger than assumption (E). Indeed, in (E) we required only \emph{sequential} continuity on a subset of the space $\P_\psi(\C)$. Assumption (F.2) is simply progressive measurability of $f$ with respect to the measure argument, which in fact was not needed for the results of Section \ref{se-mfg}. Analogs of the result of this section are possible when (F.1) fails, under stronger continuity requirements. Namely, $\sigma^{-1}b$, $f$, and $g$ should be continuous in $\mu$ uniformly in the other arguments, and $\sigma^{-1}b$ should be uniformly Lipschitz in $\mu$ with respect to total variation. However, we refrain from elaborating on this result, as it seems suboptimal and the proof is quite long.
\end{remark}

Adhering to the philosophy of the weak formulation, we choose a single convenient probability space on which we define the $n$-player games, simultaneously for all $n$. Assumptions (C) and (F) stand throughout this section (as does (S), as always). We fix a solution of the MFG $(\hat{\mu},\hat{q})$ throughout, whose existence is guaranteed by Theorem \ref{existence}, with corresponding closed-loop control $\hat{\alpha}(t,x)$ (see Remark \ref{filtration}). Consider a probability space $(\Omega,\F,P)$ supporting a sequence $(W^1,W^2,\ldots)$ of independent $d$-dimensional Wiener processes, independent $\R^d$-valued random variables $(\xi^1,\xi^2,\ldots)$ with common law $\lambda_0$, and processes $(X^1,X^2,\ldots)$ satisfying
\[
dX^i_t = b(t,X^i,\hat{\alpha}(t,X^i))dt + \sigma(t,X^i)dW^i_t, \ X^i_0 = \xi^i.
\]
For each $n$, let $\mathbb{F}^n = (\F^n_t)_{t \in [0,T]}$ denote the completion of the filtration generated by $(X^1,\ldots,X^n)$ by null sets of $\F$. Let $\mathbb{X}^i$ denote the completion of the filtration generated by $X^i$. Note that $X^i$ are independent and identically distributed and that the process $(\xi^i,W^i_t)_{0 \le t \le T}$ generates the same filtration $\mathbb{X}^i$, as in Remark \ref{filtration}. Abbreviate $\alpha^i_t = \hat{\alpha}(t,X^i)$. These controls are known as distributed controls.

We now describe the $n$-player game for fixed $n$. The control space $\A_n$ is the set of all $\mathbb{F}^n$-progressively measurable $A$-valued processes; the players have complete information of the other players' state processes. On the other hand, $\A^n_n$ is the $n$-fold Cartesian product of $\A_n$, or the set of $\mathbb{F}^n$-progressively measurable $A^n$-valued processes. Let $\mu^n$
denote the empirical measure of the first $n$ state processes as defined in the introduction by \eqref{fo:empirical}. For $\beta = (\beta^1,\ldots,\beta^n) \in \A^n_n$, define a measure $P_n(\beta)$ on $(\Omega,\F^n_T)$ by the density
\[
\frac{dP_n(\beta)}{dP} := \mathcal{E}\left(\int_0^\cdot\sum_{i=1}^n\left(\sigma^{-1}b(t,X^i,\beta^i_t) - \sigma^{-1}b(t,X^i,\alpha^i_t)\right) dW^i_t\right)_T.
\]
Under $P_n(\beta)$, for each $i=1,\ldots,n$, $X^i$ is a weak solution of the SDE
\[
dX^i_t = b(t,X^i,\beta^i_t)dt + \sigma(t,X^i)dW^{\beta^i,i}_t,
\]
where 
\[
W^{\beta^i,i}_\cdot := W_\cdot - \int_0^\cdot\left[\sigma^{-1}b(t,X^i,\beta^i_t) - \sigma^{-1}b(t,X^i,\alpha^i_t)\right]dt
\]
is a $d$-dimensional $P_n(\beta)$-Wiener process. Note that $X^i_0$ are i.i.d. with common law $\lambda_0$ under any of the measures $P_n(\beta)$ with $\beta \in \A_n^n$. For $\beta = (\beta^1,\ldots,\beta^n) \in \A^n_n$, the value to player $i$ of the strategies $\beta$ is defined by
\[
J_{n,i}(\beta) := \E^{P_n(\beta)}\left[\int_0^Tf(t,X^i,\mu^n,q^n(\beta_t),\beta^i_t)dt + g(X^i,\mu^n) \right],
\]
where, for $a = (a^1,\ldots,a^n) \in A^n$, we define
\[
q^n(a) := \frac{1}{n}\sum_{i=1}^n\delta_{a^i}.
\]
Note that the joint measurability assumption (F.3) guarantees that $g(X^i,\mu^n)$ is $\F^n_T$-measurable, while (F.2) and (F.3) ensure that $f(t,X^i,\mu^n,q^n(\beta_t),\beta^i_t)$ and $b(t,X^i,\beta^i_t)$ are progressively measurable with respect to $\mathbb{F}^n$.

\begin{theorem} \label{approximationtheorem}
Assume (C) and (F) hold, and let $(\hat{\mu},\hat{q})$ denote a solution of the MFG, with corresponding closed-loop control $\hat{\alpha} = \hat{\alpha}(t,x)$ (see Remark \ref{filtration}). Then the strategies $\alpha^i_t := \hat{\alpha}(t,X^i)$ form an approximate Nash equilbrium for the finite-player game in the sense that there exists a sequence $\epsilon_n \ge 0$ with $\epsilon_n \rightarrow 0$ such that, for $1 \le i \le n$ and $\beta \in \A_n$, 
\[
J_{n,i}(\alpha^1,\ldots,\alpha^{i-1},\beta,\alpha^{i+1},\ldots,\alpha^n) \le J_{n,i}(\alpha^1,\ldots,\alpha^n) + \epsilon_n.
\]
\end{theorem}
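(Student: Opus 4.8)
\emph{Plan.} The idea is to reduce, via Girsanov's theorem, each player's optimization in the $n$-player game to the representative-agent control problem behind Theorem~\ref{existence}; to freeze the empirical measures $\mu^n$ and $q^n$ at their limits $(\hat\mu,\hat q)$ at a cost vanishing uniformly over deviations; and then to exploit optimality of $\hat\alpha$ through the BSDE characterization of the value $V^{\hat\mu,\hat q}$. By the symmetry of the construction (the $X^i$ are i.i.d.\ and the $n$-player game is invariant under relabeling) it suffices to treat $i=1$. Writing $\alpha^{-1}:=(\alpha^2,\dots,\alpha^n)$, I would take
\[
\epsilon_n\;:=\;\sup_{\beta\in\A_n}\big(J_{n,1}(\beta,\alpha^{-1})-J_{n,1}(\alpha^1,\dots,\alpha^n)\big)^{+},
\]
so everything reduces to proving $\epsilon_n\to0$: a unilateral deviation of player $1$ gains at most $o(1)$, uniformly in $\beta$.

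First I would establish propagation of chaos. Since $b$ has no mean-field term (F.1) and $(\hat\mu,\hat q)$ solves the MFG, weak uniqueness for the state SDE identifies the common law of the i.i.d.\ processes $X^i$ under $P$ as $\hat\mu$, and the common law of $\hat\alpha(t,X^i)$ as $\hat q_t$ for a.e.\ $t$; moreover $\hat\mu\sim\X$ because $P^{\hat\mu,\hat\alpha}\sim P$, so (F.4) applies at $\hat\mu$. Using $\E[\psi^2(X)]<\infty$ from (S.2), a law of large numbers, and the fact that the single atom $\tfrac1n\delta_{X^1}$ is $\tau_\psi$-negligible (as $\tfrac1n\psi(X^1)\to0$ a.s.), one gets $\mu^n\to\hat\mu$ in $\tau_\psi(\C)$ $P$-a.s.\ and $q^n(\alpha_t)\to\hat q_t$ weakly for a.e.\ $t$; with (F.4), the growth bound (F.5), and uniform integrability, this gives $J_{n,1}(\alpha^1,\dots,\alpha^n)\to V^{\hat\mu,\hat q}$. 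For a deviation $\beta$ by player $1$, under $P_n(\beta,\alpha^{-1})$ the processes $X^2,\dots,X^n$ keep the dynamics they have under $P$ (again by (F.1)), hence remain i.i.d.\ with law $\hat\mu$ \emph{regardless of $\beta$}, while the perturbations $\tfrac1n\delta_{X^1}$ and $\tfrac1n\delta_{\beta_t}$ have mass $1/n$; so $\mu^n\to\hat\mu$ in $\tau_\psi$ and $q^n(\beta_t,\alpha^{-1}_t)\to\hat q_t$ weakly with errors controlled independently of $\beta$. Transferring this through $f$ and $g$ — absorbing the control argument into a supremum over the compact set $A$ before passing to the limit, using joint continuity of $f$ in $(\mu,q,a)$ at $\hat\mu$, and bounding $dP_n(\beta,\alpha^{-1})/dP$ in $L^{p'}$ uniformly in $\beta$ (possible since $\sigma^{-1}b$ is bounded) — I would obtain, with $\widetilde J_n(\beta):=\E^{P_n(\beta,\alpha^{-1})}\big[\int_0^Tf(t,X^1,\hat\mu,\hat q_t,\beta_t)\,dt+g(X^1,\hat\mu)\big]$,
\[
\sup_{\beta\in\A_n}\big|\,J_{n,1}(\beta,\alpha^{-1})-\widetilde J_n(\beta)\,\big|\;\longrightarrow\;0.
\]

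It remains to show $\widetilde J_n(\beta)\le V^{\hat\mu,\hat q}$ for every $\beta\in\A_n$ and $n$, with equality at $\beta=\hat\alpha(\cdot,X^1)$. For this I would change measure to $\widetilde P_n$ under which $X^1$ is driftless, so that $(\xi^1,X^1,\widetilde W^1)$ has exactly the law of the data $(\xi,X,W)$ of the representative-agent problem, while $X^2,\dots,X^n$ are unchanged and $dP_n(\beta,\alpha^{-1})/d\widetilde P_n=\mathcal E\big(\int_0^\cdot\sigma^{-1}b(t,X^1,\beta_t)\,d\widetilde W^1_t\big)_T$. Let $(\hat Y,\hat Z)$ solve the value BSDE $\hat Y_t=g(X^1,\hat\mu)+\int_t^TH(s,X^1,\hat\mu,\hat q_s,\hat Z_s)\,ds-\int_t^T\hat Z_s\,d\widetilde W^1_s$; its data depend only on $X^1$, which is $\widetilde P_n$-independent of $(W^j)_{j\ge2}$, so by uniqueness this is also the solution in the enlarged filtration $\mathbb F^n$ (the integrands against $W^2,\dots,W^n$ vanish), and $\E^{\widetilde P_n}[\hat Y_0]=V^{\hat\mu,\hat q}$ by the standard BSDE representation of the value in the weak formulation. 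Rewriting the BSDE under $P_n(\beta,\alpha^{-1})$ via $d\widetilde W^1_s=dW^{\beta,1}_s+\sigma^{-1}b(s,X^1,\beta_s)\,ds$, taking expectations, and using $H(s,x,\hat\mu,\hat q_s,z)\ge f(s,x,\hat\mu,\hat q_s,\beta_s)+z\cdot\sigma^{-1}b(s,x,\beta_s)$, one gets $\widetilde J_n(\beta)\le\E^{P_n(\beta,\alpha^{-1})}[\hat Y_0]=V^{\hat\mu,\hat q}$, the last equality since $\hat Y_0$ is a function of $\xi^1\sim\lambda_0$ only; taking $\beta=\hat\alpha(\cdot,X^1)$ makes $P_n(\beta,\alpha^{-1})=P$ and $\widetilde J_n(\beta)=J^{\hat\mu,\hat q}(\hat\alpha)=V^{\hat\mu,\hat q}$. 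Chaining the three bounds, $J_{n,1}(\beta,\alpha^{-1})\le V^{\hat\mu,\hat q}+o(1)=J_{n,1}(\alpha^1,\dots,\alpha^n)+o(1)$ uniformly in $\beta$, which is the claim.

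The main obstacle is the uniform propagation-of-chaos estimate of the second display: since $\tau_\psi(\C)$ is neither metrizable nor separable, the usual dominated-convergence and modulus-of-continuity arguments fail for nets, and the limits must be arranged to hold uniformly over all admissible deviations $\beta$ — in particular over the control-dependent empirical law $q^n(\beta_t,\alpha^{-1}_t)$ — rather than merely pointwise. What makes this possible is that the joint law of $X^2,\dots,X^n$ does not depend on $\beta$, that $A$ is compact so the control argument can be absorbed into a supremum before taking limits, and that boundedness of $\sigma^{-1}b$ in (S.2) yields $L^{p'}$-bounds on all the Girsanov densities uniform in $\beta$ and $n$; by contrast, the optimality step is a clean application of the comparison principle for BSDEs once the change of measure to the driftless picture is made.
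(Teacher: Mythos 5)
Your proposal is correct and follows essentially the same route as the paper: a uniform-in-$\beta$ propagation-of-chaos step replacing $(\mu^n,q^n)$ by $(\hat\mu,\hat q)$ (the paper's Lemmas \ref{empirical2} and \ref{step1}, using compactness of $A$ and $\P(A)$, the growth bound (F.5), the $1/n$ total-variation perturbation, and uniform $L^p$ bounds on the Girsanov densities), followed by a BSDE comparison/Hamiltonian-maximization argument showing the frozen objective is maximized by $\hat\alpha$ (Lemma \ref{step2}). Your optimality step is phrased via a change to the driftless measure and the single value BSDE rather than the paper's pair of BSDEs with shifted drivers under $P$, but this is the same comparison argument in a different reference measure.
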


\begin{remark}
The punchline is that $\alpha^i$ is $\mathbb{X}^i$-adapted for each $i$. That is, player $i$ determines his strategy based only on his own state process. As explained earlier, such strategies are said to be \emph{distributed}. The theorem tells us that even with full information, there is an approximate Nash equilibrium consisting of distributed controls, and we know precisely how to construct one using a solution of the MFG. Note that the strategies $(\alpha^i)_{i \in \N}$ also form an approximate Nash equilibrium for any partial-information version of the game, as long as player $i$ has access to (at least) the filtration $\mathbb{X}^i$ generated by his own state process.
\end{remark}

\section{Discussion of the assumptions and examples} \label{section-discussion}
This section discusses some important special cases of the assumptions of Sections \ref{se-mfg} and \ref{section-finite}. Assumptions (C) and (U) are examined first, before we turn to assumptions (S), (E), and (F).

\subsection{Assumptions (C) and (U)}
Condition (C) (resp. (U.1)) is crucial for the fixed point (resp. uniqueness) argument and holds when the Hamiltonian $h(t,x,\mu,q,z,a)$ is concave (resp. strictly concave) in $a$, for each $(t,x,\mu,q,z)$, which is a common assumption in control theory. For example, condition (C) (resp. (U.1)) holds if $b$ is affine in $a$ and $f$ is concave (resp. strictly concave) in $a$. More generally, we can get away with quasiconcavity in the previous statements. Note that if $f \equiv 0$ then $A(t,x,\mu,0) = A$, and thus condition (U.1) fails except in trivial cases. However, condition (C) frequently holds even in the absence of a running reward function $f \equiv 0$; the optimal control in such a case is typically a bang-bang control.

\begin{example}[Monotone functionals of measures]
Here we provide some examples of the monotonicity assumption (U.4) of Theorem \ref{uniqueness}. For any of the following $g$, we have
\[
\int_\C\left[g(x,\mu)-g(x,\mu')\right](\mu-\mu')(dx) \le 0, \ \forall \mu,\mu' \in \P_\psi(\C).
\]
\begin{itemize}
\item $g(x,\mu) = \phi_1(x) + \phi_2(\mu)$ for some $\phi_1 : \C \rightarrow \R$ and $\phi_2 : \P_\psi(\C) \rightarrow \R$. In this case, there is equality for all $\mu,\mu'$.
\item $g(x,\mu) = \left|\phi(x) - \int_\C \phi(y)\mu(dy) \right|^2$ for some $\phi : \C \rightarrow \R$. If, for example, $\phi(x) = x$, then this payoff function rewards a player if his state process deviates from the average.
\item $g(x,\mu) = -\int_{\R^d}\phi(|x-y|)\mu_T(dy)$, where $\phi : [0,\infty) \rightarrow [0,\infty)$ is bounded, continuous, and positive definite. A special case is when $\phi$ is bounded, nonincreasing, and convex; see Proposition 2.6 of \cite{gatheralschiedslynko-transient}.
\end{itemize}
\end{example}

\subsection{Assumptions (S), (E), and (F)}
Standard arguments give:
\begin{lemma} 
\label{sigmagrowthlemma1}
Assume that $\psi_0 : \R^d \rightarrow [1,\infty)$ is either $\psi_0(x) = 1 + |x|^p$ for some $p \ge 1$ or $\psi_0(x) = e^{p|x|}$ for some $p>0$, and let $\psi(x) = \sup_{t \in [0,T]}\psi_0(x_t)$. If $\int_{\R^d}\psi_0(x)^2\lambda_0(dx) < \infty$, $\sigma > 0$, $|\sigma(\cdot,0)| \in L^2[0,T]$, and $|\sigma(t,x) - \sigma(t,y)| \le c\|x-y\|$ for some $c > 0$, then (S.2) holds as long as $\sigma^{-1}b$ is bounded.
\end{lemma}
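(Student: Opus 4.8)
The plan is to verify the four requirements packaged in (S.2): (a) existence and uniqueness of a strong solution $X$ of \eqref{stateSDE}; (b) the moment bound $\E[\psi^2(X)] < \infty$; (c) $\sigma(t,X) > 0$ for all $t$ almost surely; and (d) uniform boundedness of $\sigma^{-1}(t,X)b(t,X,\mu,a)$. Items (c) and (d) are immediate: (c) holds because $\sigma(t,x) > 0$ is assumed for every $(t,x)$, and (d) is precisely the hypothesis ``as long as $\sigma^{-1}b$ is bounded''. So the work is in (a) and (b).

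For (a), by progressive measurability $\sigma(t,\cdot)$ depends only on the stopped path, and the hypotheses give the functional-Lipschitz bound $|\sigma(t,x) - \sigma(t,y)| \le c\sup_{s\le t}|x_s - y_s|$ together with the linear growth estimate $|\sigma(t,x)| \le |\sigma(t,0)| + c\|x\|$, with $|\sigma(\cdot,0)| \in L^2[0,T]$. This is the classical setting for a strong solution: a Picard iteration (equivalently, a contraction argument for the map $Y \mapsto \xi + \int_0^\cdot \sigma(s,Y)\,dW_s$) on the space of continuous adapted processes $Y$ with $\E[\sup_{s\le T}|Y_s|^2] < \infty$, using the Lipschitz bound and Doob's inequality to get the contraction on short intervals and then patching, yields existence and uniqueness. (Uniqueness alone would also follow from Gronwall applied to the difference of two solutions.)

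The substance is the moment bound (b). First take $\psi_0(x) = 1 + |x|^p$; since $(1+a)^2 \le 2 + 2a^2$ it suffices to prove $\E[\sup_{s\le T}|X_s|^{2p}] < \infty$. Apply the Burkholder--Davis--Gundy inequality to $\int_0^\cdot \sigma(s,X)\,dW_s$:
\[
\E\Big[\sup_{s\le t}|X_s|^{2p}\Big] \le C\,\E[|\xi|^{2p}] + C\,\E\Big[\Big(\int_0^t|\sigma(s,X)|^2\,ds\Big)^{p}\Big].
\]
Using $|\sigma(s,x)|^2 \le 2|\sigma(s,0)|^2 + 2c^2\sup_{u\le s}|x_u|^2$ gives $\int_0^t|\sigma(s,X)|^2\,ds \le A + 2c^2\int_0^t\sup_{u\le s}|X_u|^2\,ds$ with $A := 2\int_0^T|\sigma(s,0)|^2\,ds < \infty$; raising this to the power $p$ and applying Jensen in the form $(\int_0^t h_s\,ds)^p \le t^{p-1}\int_0^t h_s^{p}\,ds$ leads to
\[
\E\Big[\sup_{s\le t}|X_s|^{2p}\Big] \le C' + C'\int_0^t \E\Big[\sup_{u\le s}|X_u|^{2p}\Big]\,ds .
\]
Because the left-hand side is not a priori finite, one first runs the estimate with $X$ replaced by the stopped process $X^{\tau_N}$, $\tau_N := \inf\{t : |X_t| \ge N\}$, so that the left side is at most $N^{2p}$; Gronwall's lemma then yields a bound independent of $N$, and monotone convergence as $N\to\infty$ completes it. This uses $\E[|\xi|^{2p}] = \int|x|^{2p}\,\lambda_0(dx) < \infty$, which is implied by $\int \psi_0^2\,d\lambda_0 < \infty$.

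For $\psi_0(x) = e^{p|x|}$ one has $\psi(x) = e^{p\sup_t|x_t|}$ and must bound $\E[\sup_t e^{2p|X_t|}]$, running the same scheme at the exponential level: apply Itô's formula to a smooth convex surrogate for $x \mapsto e^{\lambda|x|}$ (e.g.\ $\sum_j \cosh(\lambda x^j)$) and control the generator term using the linear growth of $\sigma$, or combine the all-order polynomial moments of $\xi$ (which $\int e^{2p|x|}\,d\lambda_0 < \infty$ provides) with the exponential supermartingales $\exp(\pm\lambda(\int_0^\cdot\sigma\,dW)^j - \tfrac{\lambda^2}{2}\langle\cdot\rangle)$ and Doob's maximal inequality; the localization at $\tau_N$ and the initial-condition term are handled exactly as before. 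I expect the main obstacle to be \emph{closing} these estimates — producing an a priori finite quantity for Gronwall's lemma, which is the role of the localization — and in the exponential case the quadratic-variation term grows quadratically in the state and needs extra care (in the applications of Section \ref{section-applications} the matrix $\sigma$ is bounded, which sidesteps this). Assembling (a)--(d) gives (S.2).
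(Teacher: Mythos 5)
Your handling of (a), (c), (d) and of the polynomial branch of (b) is exactly the standard argument the paper is alluding to (the paper prefaces the lemma with ``Standard arguments give'' and omits the proof): functional-Lipschitz Picard iteration for existence/uniqueness, then BDG, the linear-growth bound, Jensen, localization at $\tau_N$, and Gronwall to get $\E[\|X\|^{2p}]<\infty$ from $\int|x|^{2p}\lambda_0(dx)<\infty$. That part is correct and complete in outline.

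The gap is the exponential case, which you never actually carry out, and your own hedging there points at a real problem. With only the Lipschitz/linear-growth hypothesis on $\sigma$, neither of your two suggested routes closes: It\^o applied to $\sum_j\cosh(\lambda x^j)$ produces a generator term of order $|x|^2e^{\lambda|x|}$ that Gronwall cannot absorb, and in fact no argument can exist, because the exponential branch of the lemma as literally stated is false for linear-growth $\sigma$: take $d=1$, $\sigma(t,x)=x_t$, $b\equiv 0$, $\lambda_0=\delta_1$ (geometric Brownian motion); all hypotheses hold, yet $\E[\sup_t e^{2p|X_t|}]\ge\E[\exp(2p\,e^{W_T-T/2})]=\infty$. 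The intended reading — consistent with the applications, where $\sigma$ is constant — is that in the exponential case $\sigma$ is bounded, and then your second route is the one to commit to, with one refinement: the exponential supermartingale bound gives Gaussian tails for $M^*:=\sup_t\bigl|\int_0^t\sigma(s,X)\,dW_s\bigr|$ \emph{conditionally on} $\F_0$, with constants depending only on the bound for $\sigma$, $T$, $d$; hence $\E[e^{2pM^*}\mid\F_0]\le C$ a.s.\ for a deterministic $C$, and $\E[e^{2p\|X\|}]\le\E\bigl[e^{2p|\xi|}\,\E[e^{2pM^*}\mid\F_0]\bigr]\le C\int e^{2p|x|}\lambda_0(dx)<\infty$. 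The conditioning is what lets you get by with the stated hypothesis $\int e^{2p|x|}\,\lambda_0(dx)<\infty$; a crude Cauchy--Schwarz split would demand $e^{4p|\xi|}$ integrable, and the ``all-order polynomial moments of $\xi$'' you invoke are beside the point. So: polynomial case fine and standard; exponential case incomplete as written, and completable only after strengthening the hypothesis (e.g.\ bounded $\sigma$), which is what your parenthetical remark was implicitly conceding.
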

The measurability requirement (F.3) is unusual, but not terribly restrictive. The more difficult assumption to verify is that of continuity, (F.4). Common assumptions in the literature involve continuity with respect to the topology of weak convergence or more generally a Wasserstein metric. For a separable Banach space $(E,\|\cdot\|_E)$ and $p \ge 1$, let
\[
\W^p_{E,p}(\mu,\mu') := \inf_\pi\int_E\|x-y\|_E^p\pi(dx,dy),
\]
where the infimum is over all $\pi \in \P(E \times E)$ with marginals $\mu$ and $\mu'$. When $\psi_{E,p}(x) = 1 + \|x\|_E^p$, it is known that $\W_{E,p}$ metrizes the weakest topology making the map $\P_{\psi_{E,p}}(E) \ni \mu \mapsto \int\phi\,d\mu$ continuous for each \emph{continuous} function $\phi \in B_{\psi_{E,p}}(E)$ (see Theorem 7.12 of \cite{villanibook}). Thus $\W_{E,p}$ is weaker than $\tau_{\psi_{E,p}}(\C)$, which proves the following result.

\begin{lemma} \label{sigmagrowthlemma2}
Let $\psi = \psi_{\C,p}$, $p \ge 1$. Suppose $f$ and $g$ are (sequentially) continuous in $(\mu,q,a)$ at points with $\mu \sim \X$, for each $(t,x)$, using the metric $\W_{\C,p}$ on $\P_\psi(\C)$. Then (F.4) holds.
\end{lemma}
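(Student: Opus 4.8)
The plan is to deduce this directly from assumption (F.4) together with the discussion immediately preceding the statement. Recall that (F.4) asks for continuity of $(\mu,q,a) \mapsto f(t,x,\mu,q,a)$ and $\mu \mapsto g(x,\mu)$ at every point with $\mu \sim \X$, with respect to the topology $\tau_\psi(\C)$ on $\P_\psi(\C)$ (and the weak topology on $\P(A)$). So it suffices to show that the topology induced by the metric $\W_{\C,p}$ on $\P_\psi(\C)$ is weaker than $\tau_\psi(\C)$ when $\psi = \psi_{\C,p}(x) = 1 + \|x\|^p$; indeed, if $\W_{\C,p}$-continuity holds then a fortiori $\tau_\psi(\C)$-continuity holds, since $\tau_\psi(\C)$ has more open sets and hence more convergent nets (equivalently, the identity map from $(\P_\psi(\C),\tau_\psi(\C))$ to $(\P_\psi(\C),\W_{\C,p})$ is continuous).

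The comparison of topologies is exactly the content of the sentence citing Theorem 7.12 of \cite{villanibook}: for a separable Banach space $E$ and $\psi_{E,p}(x) = 1 + \|x\|_E^p$, the metric $\W_{E,p}$ metrizes the coarsest topology on $\P_{\psi_{E,p}}(E)$ rendering $\mu \mapsto \int \phi\,d\mu$ continuous for every \emph{continuous} $\phi \in B_{\psi_{E,p}}(E)$. By definition, $\tau_\psi(\C)$ is the coarsest topology rendering $\mu \mapsto \int \phi\,d\mu$ continuous for every \emph{measurable} $\phi \in B_\psi(\C)$. Since continuous functions in $B_\psi(\C)$ form a subclass of the measurable ones, $\tau_\psi(\C)$ is finer than (or equal to) the $\W_{\C,p}$-topology. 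Here $E = \C$ is indeed a separable Banach space under the sup-norm, so the cited result applies with $E = \C$.

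Putting these two observations together: given $(t,x)$, a point $\mu_0 \sim \X$, and a net (or sequence) $(\mu_n,q_n,a_n) \to (\mu_0,q_0,a_0)$ in $\tau_\psi(\C) \times (\text{weak}) \times A$, the net $\mu_n \to \mu_0$ also in $\W_{\C,p}$, so the assumed $\W_{\C,p}$-continuity of $f$ and $g$ yields $f(t,x,\mu_n,q_n,a_n) \to f(t,x,\mu_0,q_0,a_0)$ and $g(x,\mu_n) \to g(x,\mu_0)$, which is precisely (F.4). The only point requiring any care — and the one I would flag as the main (minor) obstacle — is the sequential-versus-net distinction parenthesized in the statement: since $\tau_\psi(\C)$ is in general neither metrizable nor first countable, one should check that assuming merely \emph{sequential} $\W_{\C,p}$-continuity still suffices. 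This is fine because $\W_{\C,p}$ is a metric, so $\W_{\C,p}$-convergence is governed by sequences, and along any $\tau_\psi(\C)$-convergent sequence $\mu_n \to \mu_0$ we get a genuine $\W_{\C,p}$-convergent sequence; hence sequential continuity is exactly what is needed to conclude sequential continuity for $\tau_\psi(\C)$, which is all (F.4) literally requires in its continuity-at-a-point formulation. No further estimates are needed.
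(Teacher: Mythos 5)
Your argument is correct and is exactly the paper's: the paper's entire proof is the sentence preceding the lemma, namely that by Theorem 7.12 of \cite{villanibook} the metric $\W_{\C,p}$ metrizes the topology generated by the \emph{continuous} elements of $B_\psi(\C)$, so the $\W_{\C,p}$-topology is coarser than $\tau_\psi(\C)$ and $\W_{\C,p}$-continuity at points $\mu \sim \X$ transfers immediately to $\tau_\psi(\C)$-continuity, which is (F.4). Your closing caveat is moot rather than an obstacle: since the domain is metrizable under $\W_{\C,p}$ (and $\P(A)$ and $A$ are metrizable as well), sequential continuity there is the same as continuity, and your composition with the continuous identity map from $(\P_\psi(\C),\tau_\psi(\C))$ to $(\P_\psi(\C),\W_{\C,p})$ already delivers genuine (not merely sequential) continuity, exactly as (F.4) demands.
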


In most applications the coefficients are Markovian; that is, 
\[
f(t,x,\mu,q,a) = \hat{f}(t,x_t,\mu_t,q,a), \text{ for some } \hat{f}.
\]
Note that for any $\mu,\mu' \in \P(\C)$, $p \ge 1$, and $t \in [0,T]$,
\[
\W_{\R^d,p}(\mu_t,\mu'_t) \le \W_{\C,p}(\mu,\mu'),
\]
and thus the previous proposition includes Markovian data. Note also that assumption (F.4) demands continuity in the measure argument only at the points which are equivalent to $\X$. Of course, if $\sigma$ does not depend on $X$ or is uniformly bounded from below, then $\X_t \sim \L$ for all $t > 0$, and thus in the Markovian case we need only to check that $\hat{f}$ is continuous at points which are equivalent to Lebesgue measure. At no point was a Markov property of any use, and this is why we chose to allow path-dependence in each of the coefficients. Moreover, continuity in the spatial variable was never necessary either. Indeed, we require only that $dX_t = \sigma(t,X)dW_t$ admits a strong solution, as in assumption (S.2), which of course covers the usual Lipschitz assumption. The most common type of mean field interaction is scalar and Markovian, so we investigate such cases carefully.

\begin{proposition}[Scalar dependence on the measure] \label{proplinear}
Consider a function of the form
\[
f(t,x,\mu,q,a) = \int_\C F(t,x_t,y_t,q,a) \mu(dy) = \int_{\R^d} F(t,x_t,y,q,a) \mu_t(dy)
\]
where $F : [0,T] \times \R^d \times \R^d \times \P(A) \times A \rightarrow \R$ is jointly measurable and jointly continuous in its last two arguments whenever the first three are fixed. Let $\psi_0 : \R^d \rightarrow [1,\infty)$ be lower semicontinuous, and suppose there exists $c > 0$ such that
\[
\sup_{(t,a) \in [0,T] \times A}|F(t,x,y,q,a)| \le c(\psi_0(x) + \psi_0(y))
\]
for all $(x,y) \in \R^d \times \R^d$. Let $\psi(x) = \sup_{t \in [0,T]}\psi_0(x_t)$ for $x \in \C$. Then $f$ satisfies the relevant parts of assumptions (S.3), (S.4), (E), (F).
\end{proposition}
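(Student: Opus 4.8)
The plan is to reduce every assertion to the explicit representation $f(t,x,\mu,q,a)=\int_\C F(t,x_t,y_t,q,a)\,\mu(dy)=\int_{\R^d}F(t,x_t,y,q,a)\,\mu_t(dy)$ and to the pointwise bound on $F$. The measurability and growth statements are routine. Since $(t,x)\mapsto x_t$ is progressively measurable and $F$ is jointly measurable, $(t,x)\mapsto F(t,x_t,y_t,q,a)$ is progressively measurable for each fixed path $y$, and Fubini gives progressive measurability of its $\mu$-integral; this is the relevant half of (S.3). The same computation with $\mu$ replaced by an empirical measure $e_n(z)$ gives $f(t,x,e_n(z),q,a)=\frac1n\sum_{j=1}^n F(t,x_t,z^j_t,q,a)$, which is jointly measurable, so (F.3) holds; and (F.2) holds because $f$ depends on $\mu$ only through $\mu_t=(\mu^t)_t$. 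Integrating $|F(t,x_t,y_t,q,a)|\le c(\psi_0(x_t)+\psi_0(y_t))$ against $\mu$ and using $\psi_0(x_t)\le\psi(x)$ and $\int_\C\psi_0(y_t)\,\mu(dy)\le\int\psi\,d\mu$ yields $|f(t,x,\mu,q,a)|\le c\big(\psi(x)+\int\psi\,d\mu\big)$, which is (F.5) and, a fortiori, (S.4). Finally, for fixed $(t,x,\mu,q)$ the continuity of $a\mapsto f(t,x,\mu,q,a)$ required in (S.3) follows by dominated convergence: $F$ is continuous in $(q,a)$ and dominated by the $\mu$-integrable function $c(\psi(x)+\psi(\cdot))$.

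The core is the sequential continuity (E), and here the main obstacle is that the measure $\mu_n$ moves along with the integrand while $\tau_\psi(\C)$ is too weak a topology for a naive dominated-convergence argument; the remedy is to exploit that sequential $\tau_\psi(\C)$-convergence is in fact quite strong. Fix $(t,x)$ and let $(\mu_n,q_n,a_n)\to(\mu,q,a)$ with $\tau_\psi(\C)$ on the first coordinate; set $G_n(y):=F(t,x_t,y_t,q_n,a_n)$ and $G(y):=F(t,x_t,y_t,q,a)$, so that
\[
f(t,x,\mu_n,q_n,a_n)-f(t,x,\mu,q,a)=\int_\C(G_n-G)\,d\mu_n+\int_\C G\,d(\mu_n-\mu).
\]
Since $G$ lies in $B_\psi(\C)$ (its $\psi$-norm is at most $c(\psi(x)+1)$), the second term vanishes by definition of $\tau_\psi(\C)$-convergence. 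For the first term I would extract two consequences of the convergence: (i) $\mu_n\to\mu$ setwise, by testing against Borel indicator functions, which lie in $B_\psi(\C)$; and (ii) $\psi$ is uniformly integrable along $\{\mu_n\}$, since $\int(\psi-\psi\wedge M)\,d\mu_n\to\int(\psi-\psi\wedge M)\,d\mu$ for each $M$ (both $\psi$ and $\psi\wedge M$ lying in $B_\psi(\C)$) and $\int(\psi-\psi\wedge M)\,d\mu\to0$ as $M\to\infty$. Given $\varepsilon>0$, (ii) furnishes $M$ with $\sup_n\int_{\{\psi>M\}}\psi\,d\mu_n+\int_{\{\psi>M\}}\psi\,d\mu<\varepsilon$, whence $\int_{\{\psi>M\}}|G_n-G|\,d\mu_n\le 2c(\psi(x)+1)\varepsilon$; on $\{\psi\le M\}$ the integrand $(G_n-G)\mathbf 1_{\{\psi\le M\}}$ is uniformly bounded and tends to $0$ pointwise (continuity of $F$ in $(q,a)$), so Vitali--Hahn--Saks (uniform absolute continuity of $\{\mu_n\}$ with respect to $\lambda:=\mu+\sum_m 2^{-m}\mu_m$) together with Egorov's theorem give $\int_{\{\psi\le M\}}(G_n-G)\,d\mu_n\to0$. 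Hence $\limsup_n|f(t,x,\mu_n,q_n,a_n)-f(t,x,\mu,q,a)|\le 2c(\psi(x)+1)\varepsilon$, and letting $\varepsilon\downarrow0$ gives (E). Equivalently, the Vitali--Hahn--Saks/Egorov step may be replaced by a Fatou lemma for setwise convergent measures applied to the nonnegative functions $c(\psi(x)+\psi)\pm(G_n-G)$.

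It remains to address (F), of which (F.2), (F.3) and (F.5) were obtained above. The genuinely new requirement is (F.4), which asks for continuity with respect to the topology $\tau_\psi(\C)$ itself and not merely sequential continuity; this is the subtlest point, because $\tau_\psi(\C)$ is neither metrizable nor first countable, so the uniform-integrability/Vitali--Hahn--Saks/Egorov arguments above — being inherently sequential — do not transfer to nets. I would circumvent this as in Lemma \ref{sigmagrowthlemma2}: in the cases arising in our applications one has $\psi_0(x)=1+|x|^p$, so $\psi=\psi_{\C,p}$, and Lemma \ref{sigmagrowthlemma2} reduces (F.4) to sequential continuity of $f$ in $(\mu,q,a)$ for the metric $\W_{\C,p}$ on $\P_\psi(\C)$, which follows from the characterization of $\W_{\C,p}$-convergence (weak convergence together with convergence of $p$-th moments) combined with the same domination argument, provided $F$ is also continuous in its $y$-variable — the mild additional regularity under which (F.4) is to be read. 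Absent that regularity, $f$ remains $\tau_\psi(\C)$-sequentially continuous, which is all that the existence theorem (Theorem \ref{existence}) requires.
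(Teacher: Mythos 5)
Your treatment of the measurability and growth parts, and your sequential argument for (E) (setwise convergence from indicator test functions, uniform $\psi$-integrability, then Vitali--Hahn--Saks/Egorov or a Fatou lemma for setwise convergent measures), is correct, though heavier than necessary. The genuine gap is (F.4). You acknowledge that your sequential machinery does not transfer to nets, and you then ``circumvent'' this by (i) restricting to $\psi_0(x)=1+|x|^p$ so that Lemma \ref{sigmagrowthlemma2} applies, and (ii) assuming in addition that $F$ is continuous in its $y$-variable, calling this ``the mild additional regularity under which (F.4) is to be read.'' Neither hypothesis is in the statement: the proposition assumes only that $\psi_0$ is lower semicontinuous with the stated growth bound, and that $F$ is continuous in $(q,a)$ only. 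The extra continuity in $y$ in particular defeats the purpose of the proposition, which is invoked in the paper precisely for integrands that are discontinuous in $y$ — the rank effects $\mu_t(-\infty,x_t]$ of Example \ref{example-rank}, the type functionals, and the indicator kernels $1_{B(\iota(t,v),r)}$ in the proof of Proposition \ref{pr:flocking}, where Proposition \ref{proplinear} is cited exactly to establish (F.4). So as written, your argument proves a strictly weaker statement.

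The missing idea is the paper's Lusin-type argument, which yields genuine $\tau_\psi$-continuity along nets without any regularity of $F$ in $y$. Fix $(t,x)$, set $E=\P(A)\times A$ (compact metric) and $F_0(y,\eta):=F(t,x_t,y,\eta)$. Since $F_0$ is a Carath\'eodory-type function, the map $\R^d\ni y\mapsto F_0(y,\cdot)\in C(E)$ is Borel measurable (Theorem 4.55 of \cite{aliprantisborder}), so Lusin's theorem applied with respect to the finite measure $\psi_0\,d\mu_t$ produces a compact $K\subset\R^d$ with $\int_{K^c}\psi_0\,d\mu_t<\epsilon$ on which $y\mapsto F_0(y,\cdot)$ is continuous; hence $F_0$ is uniformly continuous on $K\times E$. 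Writing
\begin{align*}
f(t,x,\mu^\alpha,\eta^\alpha)-f(t,x,\mu,\eta)
= \int_{\R^d}\bigl(F_0(y,\eta^\alpha)-F_0(y,\eta)\bigr)\mu^\alpha_t(dy)
+\int_{\R^d}F_0(y,\eta)\,(\mu^\alpha_t-\mu_t)(dy),
\end{align*}
the second term vanishes since $F_0(\cdot,\eta)\in B_{\psi_0}(\R^d)$, and the first is bounded by $\sup_{y\in K}|F_0(y,\eta^\alpha)-F_0(y,\eta)|$ plus $2c\int_{K^c}(\psi_0(x_t)+\psi_0(y))\,\mu^\alpha_t(dy)$; the former goes to zero by uniform continuity on $K\times E$, and the latter converges to $\int_{K^c}(\psi_0(x_t)+\psi_0(y))\,\mu_t(dy)\le(1+\psi_0(x_t))\epsilon$ because $1_{K^c}(\psi_0(x_t)+\psi_0(\cdot))$ is itself a fixed element of $B_{\psi_0}(\R^d)$, so its integral converges along the net by the very definition of $\tau_{\psi_0}$. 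This single argument gives (F.4) in full generality (and (E) as a corollary), whereas your route either stops at sequential continuity or proves (F.4) only under added hypotheses.
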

\begin{proof}
Note that $\psi : \C \rightarrow [1,\infty)$ is lower-semicontinuous and thus measurable. Note also that the function $\C \ni y \mapsto F(t,x,y_t,q,a) \in \R$ is in $B_\psi(\C)$ for each $(t,x,q,a) \in [0,T] \times \R^d \times \P(A) \times A$, and thus $f$ is indeed well defined for $\mu \in \P_\psi(\C)$. Property (F.2) is obvious, and property (F.5) follows from the inequality
\[
|f(t,x,\mu,q,a)| \le c\left(\psi_0(x_t) + \int_\C\psi_0(y_t)\mu(dy)\right).
\]
The measurability assumption (F.3) is easy to verify. Condition (E) will follow from (F.4), which we prove now. 

Fix $(t,x) \in [0,T] \times \C$, and let $E = \P(A) \times A$. Let $F_0(y,\eta) := F(t,x_t,y,\eta)$ for $(y,\eta) \in \R^d \times E$. Fix $(\mu,\eta) \in \P_\psi(\C) \times E$ and a net $(\mu^\alpha,\eta^\alpha)$ converging to $(\mu,\eta)$. We also have $\mu^\alpha_t \rightarrow \mu_t$ in $\tau_{\psi_0}(\R^d)$. Note that
\begin{align*}
f(t,x,\mu^\alpha,\eta^\alpha) - f(t,x,\mu,\eta) = &\int_{\R^d} (F_0(y,\eta^\alpha) - F_0(y,\eta))\mu^\alpha_t(dy) \\
	&+ \int_{\R^d} F_0(y,\eta)(\mu^\alpha_t - \mu_t)(dy)
\end{align*}
The second term clearly tends to zero. For the first term, fix $\epsilon > 0$. Since $E$ is compact metric, the function $\R^d \ni y \mapsto F_0(y,\cdot) \in C(E)$ is measurable, using the Borel $\sigma$-field generated by the supremum norm on the space $C(E)$ of continuous real-valued functions of $E$; see Theorem 4.55 of \cite{aliprantisborder}. Thus, by Lusin's theorem (12.8 of \cite{aliprantisborder}), there exists a compact set $K \subset \R^d$ such that $\int_{K^c}\psi_0\,d\mu_t < \epsilon$ and $K \ni y \mapsto F_0(y,\cdot) \in C(E)$ is continuous. Since $|F_0(y,\eta^\prime)| \le c(\psi_0(x_t) + \psi_0(y))$ for all $(y,\eta^\prime) \in \R^d \times E$, 
\begin{align*}
\left|\int_{\R^d} (F_0(y,\eta^\alpha) - F_0(y,\eta))\mu^\alpha_t(dy)\right| \le &\sup_{y \in K}|F_0(y,\eta^\alpha) - F_0(y,\eta)| \\
	&+ 2c\int_{K^c}(\psi_0(x_t) + \psi_0(y))\,\mu^\alpha_t(dy).
\end{align*}
It follows from the compactness of $E$ and Lemma \ref{lemma-joint-continuity} below that the restriction of $F_0$ to $K \times E$ is uniformly continuous. Since $K$ is compact, we use Lemma \ref{lemma-joint-continuity} again in the other direction to get $\sup_{y \in K}|F_0(y,\eta^\alpha) - F_0(y,\eta)| \rightarrow 0$. Since also
\[
\lim\int_{K^c}(\psi_0(x_t) + \psi_0(y))\,\mu^\alpha_t(dy) = \int_{K^c}(\psi_0(x_t) + \psi_0(y))\,\mu_t(dy) \le (1 + \psi_0(x_t))\epsilon,
\]
we have
\[
\limsup\left|\int_{\R^d} (F_0(y,\eta^\alpha) - F_0(y,\eta))\mu^\alpha_t(dy)\right| \le 2c(1 + \psi_0(x_t))\epsilon.
\]
\end{proof}

\begin{corollary} \label{scalar-corollary}
Let $F$ and $\psi_0$ be as in Proposition \ref{proplinear}, and suppose
\[
f(t,x,\mu,q,a) = G\left(t,x_t,\int_{\R^d} F(t,x_t,y,q,a) \mu_t(dy),q,a\right),
\]
where $G : [0,T] \times \R^d \times \R \times \P(A) \times A \rightarrow \R$ is jointly measurable and continuous in its last three arguments. If also
\[
|G(t,x,y,q,a)| \le c\left(\psi_0(x) + |y|\right)
\]
for some $c > 0$, then $f$ satisfies the relevant parts of assumptions (S.3), (S.4), (E), (F).
\end{corollary}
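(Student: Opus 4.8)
The plan is to reduce everything to Proposition~\ref{proplinear} by factoring $f$ through the auxiliary function
\[
\tilde{f}(t,x,\mu,q,a) := \int_{\R^d} F(t,x_t,y,q,a)\,\mu_t(dy),
\]
so that $f(t,x,\mu,q,a) = G\big(t,x_t,\tilde{f}(t,x,\mu,q,a),q,a\big)$. Since $F$ and $\psi_0$ satisfy the hypotheses of Proposition~\ref{proplinear}, that proposition applies to $\tilde{f}$: for each fixed $(\mu,q,a)$ the map $(t,x)\mapsto\tilde{f}(t,x,\mu,q,a)$ is progressively measurable, $\tilde{f}$ is empirically measurable, one has the bound $|\tilde{f}(t,x,\mu,q,a)|\le c\big(\psi_0(x_t)+\int_{\R^d}\psi_0\,d\mu_t\big)$, and, for each fixed $(t,x)$, the map $(\mu,q,a)\mapsto\tilde{f}(t,x,\mu,q,a)$ is continuous on $\P_\psi(\C)\times\P(A)\times A$ at every point with $\mu\sim\X$ (hence sequentially continuous on $\P_X\times\P(A)\times A$). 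It then suffices to push each of these properties through the outer composition with $G$.

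For the growth bounds in (S.4) and (F.5), combine $|G(t,x,y,q,a)|\le c(\psi_0(x)+|y|)$ with the bound on $|\tilde{f}|$ to get
\[
|f(t,x,\mu,q,a)| \le c\Big(\psi_0(x_t)+c\big(\psi_0(x_t)+\textstyle\int_{\R^d}\psi_0\,d\mu_t\big)\Big) \le C\Big(\psi(x)+\textstyle\int_\C\psi\,d\mu\Big),
\]
using $\psi_0(x_t)\le\psi(x)$ and $\int\psi_0\,d\mu_t\le\int\psi\,d\mu$. For measurability, fix $(\mu,q,a)$: the map $(t,x)\mapsto\big(t,x_t,\tilde{f}(t,x,\mu,q,a),q,a\big)$ is progressively measurable coordinatewise, and composing with the jointly measurable $G$ yields (S.3); substituting $e_n(\cdot)$ for $\mu$ and invoking empirical measurability of $\tilde{f}$ gives (F.3) in exactly the same way. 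Property (F.2) is immediate, since $f$ depends on $\mu$ only through $\mu_t$ and $\mu_t=(\mu^t)_t$.

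For continuity, fix $(t,x)$. By Proposition~\ref{proplinear} the map $(\mu,q,a)\mapsto\big(\tilde{f}(t,x,\mu,q,a),q,a\big)\in\R\times\P(A)\times A$ is continuous at every point with $\mu\sim\X$, and $G(t,x_t,\cdot,\cdot,\cdot)$ is continuous on $\R\times\P(A)\times A$; composing the two gives continuity of $(\mu,q,a)\mapsto f(t,x,\mu,q,a)$ at points with $\mu\sim\X$, i.e. (F.4). Restricting to sequences in $\P_X$ yields the sequential continuity demanded by (E), and freezing $\mu$ and $q$ yields continuity of $a\mapsto f(t,x,\mu,q,a)$, which completes (S.3). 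I do not expect any genuine obstacle here; the only points needing a little care are keeping straight which clauses of (S.3), (S.4), (E), (F) are actually relevant to $f$ (the hypotheses on $b$ being vacuous), and the fact that the inner continuity statement is with respect to $\tau_\psi(\C)$ while the outer one lives on the metrizable space $\R\times\P(A)\times A$ — which is harmless, since a composition of continuous maps between topological spaces is continuous and Proposition~\ref{proplinear} supplies precisely the continuity of the inner map.
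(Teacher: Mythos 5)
Your proposal is correct and follows essentially the same route the paper intends: Corollary \ref{scalar-corollary} is read off from Proposition \ref{proplinear} by viewing $f$ as the composition of the inner integral map $\tilde{f}$ with the function $G$, which is continuous in its last three arguments and has the stated growth, exactly as you do. One small remark: for the continuity of $a \mapsto f(t,x,\mu,q,a)$ required by (S.3) at an \emph{arbitrary} $\mu \in \P_\psi(\C)$ (not necessarily $\mu \sim \X$), you should invoke either the fact that the proof of Proposition \ref{proplinear} actually establishes continuity of $(\mu,q,a) \mapsto \tilde{f}(t,x,\mu,q,a)$ at every point of $\P_\psi(\C) \times \P(A) \times A$, or a direct dominated convergence argument using the bound $|F(t,x_t,y,q,a)| \le c(\psi_0(x_t)+\psi_0(y))$, since the continuity statement you record (only at points with $\mu \sim \X$) does not by itself cover this clause.
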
 

We will occasionally need the following simple lemma, which was used in the proof of Proposition \ref{proplinear}. Its proof is straightforward and thus omitted. 

\begin{lemma} \label{lemma-joint-continuity}
Let $E$ and $K$ be topological spaces with $K$ compact, let $G : E \times K \rightarrow \R$, and let $x_0 \in E$ be fixed. Then $G$ is jointly continuous at points of $\{x_0\} \times K$ if and only if $G(x_0,\cdot)$ is continuous and $x \mapsto \sup_{y \in K}|G(x,y) - G(x_0,y)|$ is continuous at $x_0$. 
\end{lemma}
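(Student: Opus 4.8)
The plan is to prove the two implications separately, and it is convenient to record at the outset that the function
\[
\Phi(x) := \sup_{y \in K}|G(x,y) - G(x_0,y)|
\]
satisfies $\Phi(x_0) = 0$ and $\Phi \ge 0$, so that ``continuity of $\Phi$ at $x_0$'' means precisely $\lim_{x \to x_0}\Phi(x) = 0$.

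For the ``if'' direction I would assume that $G(x_0,\cdot)$ is continuous and that $\Phi$ is continuous at $x_0$, and check joint continuity at an arbitrary point $(x_0,y_0)$ with $y_0 \in K$. Given a net $(x_\gamma,y_\gamma) \to (x_0,y_0)$ in $E \times K$, I split
\[
|G(x_\gamma,y_\gamma) - G(x_0,y_0)| \le |G(x_\gamma,y_\gamma) - G(x_0,y_\gamma)| + |G(x_0,y_\gamma) - G(x_0,y_0)|.
\]
The first term is bounded by $\Phi(x_\gamma)$, which tends to $\Phi(x_0) = 0$ by hypothesis; the second tends to $0$ by continuity of $G(x_0,\cdot)$ at $y_0$. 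Hence $G(x_\gamma,y_\gamma) \to G(x_0,y_0)$. Note that this direction uses neither compactness of $K$ nor a subnet argument.

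For the ``only if'' direction, assume $G$ is jointly continuous at every point of $\{x_0\}\times K$. Continuity of $G(x_0,\cdot)$ is immediate, since the inclusion $K \ni y \mapsto (x_0,y) \in E \times K$ is continuous and its composition with $G$ is $G(x_0,\cdot)$. The substantive claim is continuity of $\Phi$ at $x_0$, and this is where compactness of $K$ is used. I would argue by contradiction: if $\Phi(x) \not\to 0$, there exist $\epsilon > 0$ and a net $x_\gamma \to x_0$ with $\Phi(x_\gamma) \ge \epsilon$, and therefore points $y_\gamma \in K$ with $|G(x_\gamma,y_\gamma) - G(x_0,y_\gamma)| \ge \epsilon/2$. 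By compactness of $K$ the net $(y_\gamma)$ has a subnet converging to some $y^* \in K$; along this subnet $x_\gamma$ still converges to $x_0$, so $(x_\gamma,y_\gamma) \to (x_0,y^*)$ in $E \times K$. Joint continuity at $(x_0,y^*)$ gives $G(x_\gamma,y_\gamma) \to G(x_0,y^*)$, and continuity of $G(x_0,\cdot)$ gives $G(x_0,y_\gamma) \to G(x_0,y^*)$; subtracting yields $|G(x_\gamma,y_\gamma) - G(x_0,y_\gamma)| \to 0$, contradicting the lower bound $\epsilon/2$.

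The only point requiring care is the subnet extraction: since $E$ and $K$ are merely topological spaces, the entire argument must be phrased with nets rather than sequences, and one must invoke the characterization of compactness by convergent subnets of arbitrary nets. Everything else is elementary, which is presumably why the authors call the proof straightforward.
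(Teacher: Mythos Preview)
Your argument is correct and is exactly the straightforward verification the paper alludes to; since the paper omits the proof entirely, there is nothing to compare against beyond noting that your use of nets and the compactness-via-subnets step are precisely what the general topological setting requires.
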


\begin{example}[Geometric Brownian motion] \label{ex:geometricBM}
Requiring $\sigma^{-1}b$ to be \\ bounded rather than $\sigma^{-1}$ and $b$ each to be bounded notably allows for state processes of a geometric Brownian motion type. For example, if $d=1$, our assumptions allow for coefficients of the form
\begin{align*}
b(t,x,\mu,a) &= \hat{b}(t,\mu,a)x_t, \\
\sigma(t,x) &= \hat{\sigma}(t)x_t,
\end{align*}
where $\hat{\sigma}(t) > 0$ for all $t$ and $\hat{\sigma}^{-1}\hat{b}$ is bounded.
\end{example}

\begin{remark}
\label{re:bounded}
We close the subsection with a remark on the assumption of boundedness of $\sigma^{-1}b$, which could certainly be relaxed. The reason for this assumption lies in the BSDE (\ref{BSDEvalue}) for the value function; boundedness of $\sigma^{-1}b$ equates to a standard Lipschitz driver, as covered in \cite{pardouxpengbsde}. The results of Hamadene and Lepeltier in \cite{hamadenelepeltier-control} may be applied if $b$ and $\sigma$ have linear growth in $x$ and $\sigma$ is bounded below, but this increases the technicalities and rules out a direct application of the results of \cite{hupeng-stability}. However, we only really need \cite{hupeng-stability} in order to treat mean field interactions in the control, and thus our analysis should still work under appropriate linear growth assumptions. Our assumptions of boundedness of $\sigma^{-1}b$ and compactness of $A$ unfortunately rule out common linear-quadratic models, but, nonetheless, the same general techniques could be used to study a large class of linear-quadratic problems (still, of course, with uncontrolled volatility) in which both these assumptions fail. More care is required in the choice of admissible controls, and the BSDE for the value function becomes quadratic in $z$; this program was carried out for stochastic optimal control problems in \cite{fuhrmanhutessitore-control}, and could presumably be adapted to mean field games.
\end{remark}

\subsection{Additional Examples}
Corollary \ref{scalar-corollary} allows us to treat many mean field interactions which are not weakly continuous, as they may involve integrals of discontinuous functions. This is useful in the following examples.

\begin{example}[Rank effects] \label{example-rank}
Suppose an agent's reward depends on the rank of his state process among the population. That is, suppose $d=1$ and $f(t,x,\mu,q,a)$ involves a term of the form $G(\mu_t(-\infty,x_t])$, where $G : [0,1] \rightarrow \R$ is continuous. Such terms with $G$ monotone are particularly interesting for applications, as suggested for a model of oil production in \cite{gueantlasrylionsmfg}. The intuition is that an oil producer prefers to produce before his competitors, in light of the uncertainty about the longevity of the oil supply. The state process $X$ represents oil reserves, and $G$ should be decreasing in their model. Proposition \ref{proplinear} shows that the inclusion of such terms as $\mu_t(-\infty,x_t]$ in $f$ or $g$ is compatible with all of our assumptions. If $b$ contains such rank effects, no problem is posed for assumptions (S) and (E), but of course (F.1) is violated.
\end{example}

\begin{example}[Types]
In \cite{huangmfg1}, Huang, Caines, and Malham\'{e} consider multiple types of agents, and a dependence on the mean field within each type. The number of types is fixed, and an agent cannot change type during the course of the game. Using the construction of Remark \ref{re:extrarandomness}, we may model this by giving each agent a random but i.i.d. type at time zero. Alternatively, in some models an agent's type may change with his state (or with time, or with his strategy); for example, a person's income bracket depends on his wealth. Suppose, for example, that $A_1,A_2,\ldots,A_m \subset \R^d$ are Borel sets of positive Lebesgue measure, and define $F_i : \P(\R^d) \rightarrow \P(\R^d)$ by $F_i(\mu)(B) := \mu(B \cap A_i) / \mu(A_i)$ when $\mu(A_i) > 0$ and $F_i(\mu) = 0$ otherwise. As long as $\sigma$ is bounded away from zero, then $\X_t \sim \L$ where $\L$ is again Lebesgue measure on $\R^d$, and indeed $F_i$ are $\tau_1(\R^d)$-continuous at points $\mu \sim \X_t$. So we can treat functionals of the form
\[
f(t,x,\mu,q,a) = G(t,x_t,F(\mu_t),q,a),
\]
where $F = (F_1,\ldots,F_m)$, and $G : [0,T] \times \R^d \times (\P(\R^d))^m \times \P(A) \times A \rightarrow \R$.
\end{example}

\section{Applications revisited} \label{section-applicationsrevisited}
Before proving the main results, we return briefly to the models presented in Section \ref{section-applications}, for which we demonstrate the applicability of the existence and approximation theorems (\ref{existence} and \ref{approximationtheorem}).

\subsection{Price impact models}
We restrict our attention to finite-volume order books. We suppose that $A \subset \R$ is a compact interval containing the origin, $c' : A \rightarrow \R$ is continuous and nondecreasing, $\sigma > 0$, $f : [0,T] \times \R \rightarrow \R$ and $g : \R \rightarrow \R$ are measurable, and finally that there exists $c_1 > 0$ such that
\[
|f(t,x)| + |g(x)| \le c_1 e^{c_1|x|}, \text{  for all } (t,x)\in [0,T]\times\R.
\]
Let $c(x) = \int_0^xc'(a)da$. Assume that $X^i_0$ are i.i.d. and that their common distribution $\lambda_0 \in \P(\R)$ satisfies $\int_\R e^{p|x|}\lambda_0(dx) < \infty$ for all $p > 0$. In the notation of the paper, we have $b(t,x,\mu,a) = a$, $\sigma(t,x) = \sigma$, $f(t,x,\mu,q,a) = \gamma x_t\int_Ac'dq - c(a) - f(t,x_t)$, $g(x,\mu) = g(x_T)$, and $\psi(x) = e^{c_1\|x\|}$.

It is quite easy to check the assumptions of the previous sections, at least with the help of Lemma \ref{sigmagrowthlemma1} below, yielding the following theorem. Moreover, in this simple case we can estimate the rate of convergence, as proven at the end of Section \ref{section-finiteproof}.

\begin{proposition} \label{pr:priceimpact}
Under the above assumptions, the existence and approximation theorems \ref{existence} and \ref{approximationtheorem} apply to the price impact model. Moreover, in the approximation theorem, there exists a constant $C > 0$ such that
\[
\epsilon_n \le C/\sqrt{n}.
\]
\end{proposition}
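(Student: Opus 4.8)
The plan is to verify that the price impact model, cast in the abstract notation above, satisfies all the standing assumptions (S), the existence assumption (E), the convexity assumption (C), and the approximation assumptions (F), and then to invoke Theorems \ref{existence} and \ref{approximationtheorem} directly. Once this is done, the only additional content is the quantitative bound $\epsilon_n \le C/\sqrt{n}$, whose derivation is deferred to Section \ref{section-finiteproof}; here I would simply announce that it follows from the structure of the proof of Theorem \ref{approximationtheorem} together with explicit estimates available in this simple setting.

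First I would check (S). The control space $A \subset \R$ is a compact interval containing $0$, hence compact and convex, so (S.1) holds; $b(t,x,\mu,a) = a$ is trivially continuous in $a$ and progressively measurable, and $\sigma(t,x) = \sigma$ is a positive constant. For (S.2), the driftless equation is $dX_t = \sigma \, dW_t$, which has the obvious strong solution $X_t = \xi + \sigma W_t$; since $A$ is bounded and $\sigma$ is a fixed positive constant, $\sigma^{-1}b(t,x,\mu,a) = a/\sigma$ is uniformly bounded. With $\psi(x) = e^{c_1\|x\|}$ and $\lambda_0$ having exponential moments of all orders, Lemma \ref{sigmagrowthlemma1} (with $\psi_0(x) = e^{c_1|x|}$) gives $\E[\psi^2(X)] < \infty$ and confirms (S.2). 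Assumptions (S.3)--(S.5) concern $f(t,x,\mu,q,a) = \gamma x_t\int_A c'\,dq - c(a) - f(t,x_t)$ and $g(x,\mu) = g(x_T)$: measurability in $(t,x)$ and continuity in $a$ are clear since $c = \int_0^\cdot c'$ is continuous (indeed $C^1$) on $A$; the growth bound (S.4) follows from $|f(t,x_t)| + |g(x_T)| \le c_1 e^{c_1|x_t|} + c_1 e^{c_1|x_T|} \le 2c_1\psi(x)$, from boundedness of $c$ on the compact $A$, and from $|\gamma x_t\int_A c'\,dq| \le \gamma \|c'\|_{\infty,A}\|x\| \le C\psi(x)$; and (S.5) holds with $f_1(t,x,\mu,a) = -c(a) - f(t,x_t)$ and $f_2(t,x,\mu,q) = \gamma x_t\int_A c'\,dq$, which has no $a$-dependence as required.

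Next I would check (C), (E), and (F). For (C): since $b$ is affine (in fact constant) in $a$ and $f$ is concave in $a$ (because $c$ is convex, so $-c(a)$ is concave, and the rest of $f$ is affine in $a$), the Hamiltonian $h$ is concave in $a$, hence $A(t,x,\mu,z)$ is convex. For (E) and (F.4): the measure-dependence of $f$ enters only through the term $\gamma x_t\int_A c'\,dq$, which is continuous in $q$ with respect to weak convergence on $\P(A)$ (as $c'$ is bounded continuous on the compact $A$) and does not involve $\mu$ at all, while $g$ does not depend on $\mu$; so the required continuity in $(\mu,q,a)$ is immediate and holds everywhere, not just on $\P_X$. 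Assumption (F.1) holds since $b(t,x,\mu,a) = a$ has no mean field term; (F.2) holds since $f(t,x,\mu,q,a)$ depends on $x$ only through $x_t$, so a fortiori through $\mu^t$ in the trivial sense that it does not depend on $\mu$; (F.3) is the empirical measurability of $b$, $f$, $g$, which is routine since the only measure-argument is $q$ entering through $\int_A c'\,dq = n^{-1}\sum_j c'(a^j)$, a measurable function of $(a^1,\ldots,a^n)$; and (F.5) is the same growth bound as (S.4) with the slightly different but equally valid right-hand side.

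The main obstacle, such as it is, will be the quantitative estimate $\epsilon_n \le C/\sqrt{n}$, which does not follow from the abstract Theorem \ref{approximationtheorem} (that theorem gives only $\epsilon_n \to 0$). For this I would revisit the proof of Theorem \ref{approximationtheorem} in Section \ref{section-finiteproof} and observe that, in this model, the discrepancy between the finite-player objective $J_{n,i}$ and the mean-field value comes down to controlling $\E|\int_A c'\,dq^n_t - \int_A c'\,d\hat{q}_t|$ and a terminal term involving $g$; because $\hat{q}_t$ is the common law of the i.i.d. optimally-controlled controls $\hat\alpha(t,X^i)$ and $c'$ is bounded, a standard central-limit-order estimate for i.i.d. averages gives $\E|\int_A c'\,dq^n_t - \int_A c'\,d\hat{q}_t| \le C/\sqrt n$ uniformly in $t$, and the remaining terms are handled by the exponential integrability of $\lambda_0$ together with the Lipschitz/growth bounds already established. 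Integrating over $t \in [0,T]$ preserves the $1/\sqrt n$ rate, yielding the claimed constant $C$.
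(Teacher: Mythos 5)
Your verification of (S), (C), (E), and (F) is correct and matches what the paper does (the paper dispatches this step in one sentence via Lemma \ref{sigmagrowthlemma1}), and your overall strategy for the rate --- revisit the proof of Theorem \ref{approximationtheorem}, note that the only mean field dependence left is the term $\gamma x_t\int_A c'\,dq$, and use boundedness of $c'$ plus the i.i.d.\ structure to get a variance bound of order $1/n$ --- is exactly the paper's route. (One small slip: since $g(x,\mu)=g(x_T)$ has no measure argument, there is no terminal discrepancy term at all in this model.)

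However, the rate argument as you state it has a gap: the quantity to control is $\sup_{\beta\in\A_n}\E^{P_n(\beta^\alpha)}\bigl[\,|\gamma X^1_t\int_A c'\,d(q^n(\beta^\alpha_t)-\hat q_t)|\,\bigr]$, and your appeal to ``a standard central-limit-order estimate for i.i.d.\ averages'' does not directly apply to it, for two reasons. First, the empirical measure $q^n(\beta^\alpha_t)$ contains the deviating player's control $\beta_t$, which is $\mathbb{F}^n$-adapted and need not have law $\hat q_t$; the paper removes it with the one-point perturbation bound $|\int_A c'\,d(q^n(\beta^\alpha_t)-\hat q_t)|\le 2C_1/n+|\int_A c'\,d(q^n(\alpha_t)-\hat q_t)|$, $C_1=\sup_A|c'|$. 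Second, the expectation is under the tilted measure $P_n(\beta^\alpha)$, under which the controls $\alpha^2_t,\ldots,\alpha^n_t$ are no longer i.i.d.\ with law $\hat q_t$ (the density may depend on all the $X^i$); the paper transfers back to $P$ by H\"older's inequality, using that $\E[\|X^1\|^4]<\infty$ and that the densities $dP_n(\beta^\alpha)/dP$ are bounded in $L^p(P)$ uniformly in $\beta$ and $n$ (the argument of Lemma \ref{RNDbound}), and only then applies the variance computation $\E[(\int_A c'\,d(q^n(\alpha_t)-\hat q_t))^2]=\mathrm{Var}(c'(\alpha^1_t))/n\le 4C_1^2/n$ under $P$. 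These two steps are elementary but they constitute essentially all of the paper's proof of the $C/\sqrt n$ bound beyond the variance estimate, so your sketch should include them explicitly.
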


\subsection{Flocking models}
To work around the degeneracy of the diffusion $(X^i,V^i)$, we consider only $V^i$ as the state variable, and recover $X^i$ by making the coefficients path-dependent. Let $b(t,v,\mu,a) = a$, $\sigma > 0$ constant, $g \equiv 0$, and $A \subset \R^d$ compact convex. Define $\iota : [0,T] \times \C \rightarrow \R^d$ and $I : [0,T] \times \P(\C) \rightarrow \P(\R^d)$ by
\[
\iota(t,v) := \int_0^tv_sds, \quad I(t,\mu) := \mu \circ \iota(t,\cdot)^{-1}.
\]
Note that $\iota(t,V^i)$ represents the position of the individual at time $t$; we are assuming each individual starts at the origin to keep the notation simple and consistent, although any initial distribution of positions could be accounted for by using the construction of Remark \ref{re:extrarandomness}. For flocking models, \eqref{flocking-objective} is captured by choosing a running reward function of the form:
\[
f^{(1)}(t,v,\mu,a) = -|\alpha|_R^2 - \left|\int_\C\mu(dv')(v'_t-v_t)\phi(|\iota(t,v'-v)|)\right|^2_Q.
\]
The minus signs are only to turn the problem into a maximization, to be consistent with the notation of the rest of the paper. Recall that $\phi : [0,\infty) \rightarrow [0,\infty)$ is nonincreasing and thus Borel measurable. Assume the initial data $V^i$ are i.i.d. and square-integrable, with law $\lambda_0 \in \P_2(\R^d)$. Take $\psi(x) = 1 + \|x\|^2$ for $x \in \C$. For the nearest neighbor model, we use
\begin{align*}
&f^{(2)}(t,v,\mu,a) \\
	& \ = -|\alpha|_R^2 - \left|\frac{c}{I(t,\mu)(B(\iota(t,v),r))}\int_\C\mu(dv')(v'_t-v_t)1_{B(\iota(t,v),r)}(\iota(t,v'))\right|^2_Q,
\end{align*}
where $r > 0$ was given, and $B(x,r')$ denotes the closed ball of radius $r'$ centered at $x$. Consider the second term above to be zero whenever $I(t,\mu)(B(\iota(t,v),r)) = 0$. Finally, for the $k$-nearest-neighbor model, we choose $\eta \in (0,1)$ to represent a fixed percentage of neighbors, which amounts to keeping $k/n$ fixed in the finite-player game as we send $n \rightarrow \infty$. We define $r : \P(\R^d) \times \R^d \rightarrow [0,\infty)$ by 
\[
r(\mu,x,y) := \inf\left\{r^\prime > 0 : \mu(B(x,r^\prime)) \ge y\right\},
\]
and
\begin{align*}
&f^{(3)}(t,v,\mu,a) \\
	& \ = -|\alpha|_R^2 - \left|\frac{c}{\eta}\int_\C\mu(dv')(v'_t-v_t)1_{B(\iota(t,v),r(I(t,\mu),\iota(t,v)),\eta)}(\iota(t,v'))\right|^2_Q.
\end{align*}
It is straightforward to check that the objective \eqref{flocking-objective-nn} for the nearest neighbor models is equivalent to maximizing
\[
\E\int_0^Tf^{(1)}(t,V^i,\mu^n,\alpha^i_t)dt, \quad \text{ where } \mu^n = \frac{1}{n}\sum_{j=1}^n\delta_{V^j},
\]
replacing $f^{(1)}$ by $f^{(2)}$ in the case of the $k$-nearest neighbor model.

\begin{proposition} \label{pr:flocking}
Under the above assumptions, the existence and approximation theorems \ref{existence} and \ref{approximationtheorem} apply to each of the flocking models.
\end{proposition}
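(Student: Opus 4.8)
The plan is to verify, for each of the running rewards $f^{(1)},f^{(2)},f^{(3)}$ together with the choices $b(t,v,\mu,a)=a$, $\sigma$ a constant nonsingular matrix, $g\equiv 0$, and $\psi(v)=1+\|v\|^2$, that Assumptions (S), (C) and (F) hold; Theorems \ref{existence} and \ref{approximationtheorem} then apply directly, and (E) follows from the stronger (F.4) (for $b\equiv a$ the continuity of $b$ demanded in (E) is trivial). The routine items come first. (S.1) is immediate. (S.2) is Lemma \ref{sigmagrowthlemma1} with $\psi_0(x)=1+|x|^2$, whose hypotheses hold since $\sigma$ is constant and positive and $\sigma^{-1}b=\sigma^{-1}a$ is bounded on the compact set $A$ (on the moment side Lemma \ref{sigmagrowthlemma1} requires $\lambda_0$ to have a fourth moment). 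The progressive-measurability parts of (S.3), together with (F.1)--(F.3), are routine once one notes that $v\mapsto v_t$, $\iota(t,v)=\int_0^t v_s\,ds$ and $I(t,\mu)=\mu\circ\iota(t,\cdot)^{-1}$ are progressively measurable and unchanged under truncating paths at time $t$, that $\phi$ and the indicators of closed balls are Borel, and that for $\mu=e_n$ all integrals become finite sums and the radius $r(\cdot)$ an order statistic. (S.4) and (F.5) follow from $\phi\le\phi(0)$, compactness of $A$ and Jensen's inequality, which give $|f^{(i)}(t,v,\mu,a)|\le c(\psi(v)+\int\psi\,d\mu)$; and (S.5) is vacuous since $f^{(i)}$ has no $q$-argument. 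For (C): the Hamiltonian is $-a^\top Ra+z^\top\sigma^{-1}a$ plus terms not involving $a$, hence concave in $a$ because $R\succeq 0$, so $A(t,v,\mu,z)$ is convex (cf.\ the discussion of (C) in Section \ref{section-discussion}).

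The substance is (F.4), continuity of $\mu\mapsto f^{(i)}(t,v,\mu,a)$ at points $\mu\sim\X$, and here the key structural fact --- exactly where the hypothesis ``$\sigma$ constant and nonsingular'' enters --- is that for $t>0$ the pushforward $I(t,\X)=\X\circ\iota(t,\cdot)^{-1}$ is the law of $t\xi+\sigma\int_0^t W_s\,ds$ and, being a convolution with a nondegenerate Gaussian, has a strictly positive density on $\R^d$; likewise the law of $\iota(t,\cdot-w)$ under $\X$ has a density for each fixed $w\in\C$. Consequently, for $\mu\sim\X$: $I(t,\mu)\sim I(t,\X)$ has an a.e.\ positive density, so it charges every ball of positive radius and gives zero mass to every sphere, and the countable discontinuity set of the nonincreasing $\phi$ pulls back to a $\mu$-null set. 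By Lemma \ref{sigmagrowthlemma2} it suffices for (F.4) to verify $\W_{\C,2}$-continuity at such $\mu$ (and this also gives (E), since $\W_{\C,2}$ is coarser than $\tau_\psi$), and $\W_{\C,2}$-convergence $\mu_n\to\mu$ yields weak convergence plus convergence of second moments, hence $\int h\,d\mu_n\to\int h\,d\mu$ for every $h$ of at most quadratic growth continuous $\mu$-a.e. For $f^{(1)}$ the integrand $v'\mapsto(v'_t-v_t)\phi(|\iota(t,v'-v)|)$ has linear growth and is continuous off a $\mu$-null set, so the inner integral is continuous and composing with $|\cdot|_Q^2$ finishes ($t=0$ being trivial, the reward then depending on $\mu$ only through the mean of $\mu_0$). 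For $f^{(2)}$, both the numerator $\int(v'_t-v_t)1_{B(\iota(t,v),r)}(\iota(t,v'))\,\mu(dv')$ and the denominator $I(t,\mu)(B(\iota(t,v),r))$ are integrals of linear-growth functions continuous off the sphere $\{\iota(t,v')\in\partial B(\iota(t,v),r)\}$, hence continuous at $\mu\sim\X$, and the denominator stays bounded away from $0$ near $\mu$ because $I(t,\X)$ charges the ball; so the ratio, and then $f^{(2)}$, is continuous.

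The main obstacle is $f^{(3)}$, because there the radius $r(I(t,\mu),\iota(t,v),\eta)$ of the integrating ball itself depends on $\mu$. The plan is to first prove $r_n:=r(I(t,\mu_n),\iota(t,v),\eta)\to r_\infty:=r(I(t,\mu),\iota(t,v),\eta)$: $\iota(t,\cdot)$ and $v'\mapsto v'_t$ are Lipschitz from $(\C,\|\cdot\|)$ to $\R^d$, so $\W_{\C,2}$-convergence of $\mu_n$ forces $\W_{\R^d,2}$-convergence, hence weak convergence, of $I(t,\mu_n)$, while the radial distribution function $r'\mapsto I(t,\mu)(B(\iota(t,v),r'))$ is continuous and strictly increasing on $(0,\infty)$ (a.e.\ positive density), so its $\eta$-level generalized inverse is single-valued and stable under weak convergence. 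Then one splits $\int(v'_t-v_t)1_{B(\iota(t,v),r_n)}(\iota(t,v'))\,\mu_n(dv')-\int(v'_t-v_t)1_{B(\iota(t,v),r_\infty)}(\iota(t,v'))\,\mu(dv')$ into a ``changing-ball'' piece dominated via Cauchy--Schwarz by $(\int|v'_t-v_t|^2\,d\mu_n)^{1/2}(I(t,\mu_n)(\text{annulus between radii }r_n,r_\infty))^{1/2}$ --- which vanishes since the second moments are uniformly bounded and the annulus mass converges to $I(t,\mu)(\partial B(\iota(t,v),r_\infty))=0$ by the portmanteau theorem --- plus a ``fixed-ball'' piece handled exactly as for $f^{(2)}$; and the denominator $I(t,\mu_n)(B(\iota(t,v),r_n))\to\eta>0$ by the same estimates together with continuity of the CDF of $I(t,\mu)$. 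I expect the quantile-continuity step and this bookkeeping for the $\mu$-dependent integrand of $f^{(3)}$ to be the only genuinely delicate points; everything else is routine, and the role of ``$\sigma$ constant and nonsingular'' is solely to provide the density of $I(t,\X)$, which is precisely what makes the discontinuous nearest-neighbor functionals continuous at $\mu\sim\X$.
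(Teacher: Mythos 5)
Your verification is correct, and it follows the paper's overall scheme: check (S), (C), (F); reduce (F.4) to $\W_{\C,2}$-continuity at points $\mu\sim\X$ via Lemma \ref{sigmagrowthlemma2}; and exploit, exactly as the paper does, that $\iota(t,X)=t\xi+\sigma\int_0^tW_s\,ds$ has law equivalent to Lebesgue for $t>0$, so that the nearest-neighbor discontinuities are invisible at $\mu\sim\X$. The technical route differs in two places, though. For $f^{(1)}$ and $f^{(2)}$ the paper invokes Proposition \ref{proplinear}, whose Lusin-type argument yields $\tau_\psi$-continuity without requiring any regularity of the integrand in the $y$-variable; you instead work directly in $\W_{\C,2}$, using that the integrands have linear growth and are continuous off a $\mu$-null set (spheres and the discontinuity set of the monotone $\phi$ being Lebesgue-null), together with uniform integrability coming from convergence of second moments. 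For $f^{(3)}$ the paper proves convergence of the radius through Lemma \ref{lemma-radius} and then treats the term $\int(v'_t-v_t)1_{B_{\mu^n}}(\iota(t,v'))\,d(\mu^n-\mu)$, in which the ball still varies with $n$, by appealing (twice) to Theorem 4.2 of \cite{rao-weakuniformconvergence} on uniform convergence of $\mu_n(B)$ over convex sets. Your decomposition is arranged differently: the varying ball is paired with $\mu_n$ and controlled by Cauchy--Schwarz plus a portmanteau annulus estimate (using that spheres are $I(t,\mu)$-null), while $\mu_n-\mu$ is paired with the \emph{fixed} limiting ball and handled by a.e.-continuity plus uniform integrability; the radius convergence itself comes from an elementary quantile argument (pointwise convergence of the monotone functions $r'\mapsto I(t,\mu_n)(B(x,r'))$ to a continuous, strictly increasing limit). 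This makes your proof of the $f^{(3)}$ case more self-contained --- it avoids the uniform-convergence-over-convex-sets theorem entirely --- at the cost of invoking the Skorokhod/uniform-integrability machinery that Proposition \ref{proplinear} was designed to bypass; both arguments are valid. Your parenthetical observation that Lemma \ref{sigmagrowthlemma1} with $\psi_0(x)=1+|x|^2$ actually requires a fourth moment of $\lambda_0$, rather than the square-integrability stated in Section \ref{section-applicationsrevisited}, is also well taken.
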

\begin{proof}
Assumptions (S.1), (S.4), (S.5), (C), (F.1), (F.2), and (F.5) are easy to check. Lemma \ref{sigmagrowthlemma1} below takes care of (S.2). Also, (S.3) and (F.3) are clear for $f^{(1)}$ and $f^{(2)}$, and follow from Lemma \ref{lemma-radius} below for $f^{(3)}$. It remains to check the continuity assumption (F.4). For $f^{(1)}$, this follows from Proposition \ref{proplinear} below. Apply It\^{o}'s formula to $tW_t$ to get
\[
\iota(t,X) = \int_0^tX_sds = \int_0^t(\xi + \sigma W_s)ds = t\xi + \sigma t W_t - \sigma\int_0^tsdW_s.
\]
Since $\xi$ and $W$ are independent, we see that $I(t,\X) \sim \L$ for $t \in (0,T]$, where $\L$ denotes Lebesgue measure on $\R^d$. Hence $I(t,\mu) \sim \L$ for $\mu \sim \X$, and so $\mu \mapsto 1/I(t,\mu)(B(x,r))$ is $\tau_\psi(\C)$-continuous at points $\mu \sim \X$, for each $(x,r) \in \R^d \times (0,\infty)$. This along with Proposition \ref{proplinear} below establish (F.4) for $f^{(2)}$. Finally, we prove (F.4) for $f^{(3)}$. Fix $(t,v) \in (0,T] \times \C$, and define
\begin{align*}
B_\mu &:= B\left(\iota(t,v),r(I(t,\mu),\iota(t,v),\eta)\right), \\
F(\mu) &:= \int_\C(v'_t - v_t)1_{B_\mu}(\iota(t,v'))\mu(dv'),
\end{align*}
for $\mu \in \P_\psi(\C)$. In light of Lemma \ref{sigmagrowthlemma2} and the discussion preceding it, it suffices to show $F$ is $\W_{\C,2}$-continuous at points $\mu \sim \X$. Let $\mu^n \rightarrow \mu$ in $\W_{\C,2}$ with $\mu \sim \X$, and note that $I(t,\mu) \sim I(t,\X) \sim \L$. Then
\begin{align*}
F(\mu^n) - F(\mu) = &\int_\C(v'_t - v_t)\left(1_{B_{\mu^n}} - 1_{B_\mu}\right)(\iota(t,v'))\mu(dv') \\
	&+ \int_\C(v'_t - v_t)1_{B_{\mu^n}}(\iota(t,v'))[\mu^n-\mu](dv') \\
	=: I_n + II_n.
\end{align*}
Note that $I(t,\mu^n) \rightarrow I(t,\mu)$ weakly, and thus
\[
r(I(t,\mu^n),\iota(t,x),\eta) \rightarrow r(I(t,\mu),\iota(t,x),\eta)
\]
by Lemma \ref{lemma-radius}. Since $1_{B_{\mu^n}} \rightarrow 1_{B_\mu}$ holds $\L$-a.e. (and thus $I(t,\mu)$-a.e.) and $\int_\C(v'_t-v_t)[\mu^n-\mu](dv') \rightarrow 0$, the dominated convergence theorem yields $I_n \rightarrow 0$. To show $II_n \rightarrow 0$, note that
note that 
\[
I(t,(v'_t - v_t)\mu^n(dv')) \rightarrow I(t,(v'_t - v_t)\mu(dv')), \text{ weakly}.
\]
Since the latter measure is absolutely continuous with respect to Lebesgue measure, Theorem 4.2 of \cite{rao-weakuniformconvergence} implies
\[
II_n = \left[I(t,(v'_t - v_t)\mu^n(dv')) - I(t,(v'_t - v_t)\mu(dv'))\right](B_{\mu^n}) \rightarrow 0.
\]
In fact, we should consider separately the positive and negative parts of each of the $d$ components of the signed vector measures $(v'_t - v_t)\mu^n(dv')$, since Theorem 4.2 of \cite{rao-weakuniformconvergence} is stated only for nonnegative real-valued measures.
\end{proof}

\begin{lemma} \label{lemma-radius}
The function $r$ is empirically measurable, and $r(\cdot,x,y)$ is weakly continuous at points $\mu \sim \L$.
\end{lemma}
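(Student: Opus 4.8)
The plan is to treat the two assertions separately: empirical measurability of $r(\mu,x,y)$ as a function of its three arguments, and weak continuity of $\mu \mapsto r(\mu,x,y)$ at fixed $(x,y)$ with $\mu \sim \L$. Recall $r(\mu,x,y) = \inf\{r' > 0 : \mu(B(x,r')) \ge y\}$. For measurability, the key observation is that $r'\mapsto\mu(B(x,r'))$ is nondecreasing and right-continuous, so the defining infimum can be rewritten as a countable expression: $r(\mu,x,y) = \inf\{r'\in\mathbb{Q}_{>0} : \mu(B(x,r')) \ge y\}$ (using right-continuity to replace $r'>0$ by rationals). Consequently $\{(\mu,x,y) : r(\mu,x,y) < s\} = \bigcup_{r'\in\mathbb{Q}\cap(0,s)}\{(\mu,x,y) : \mu(B(x,r')) \ge y\}$, and the set on the right is measurable once we know $(\mu,x)\mapsto\mu(B(x,r'))$ is jointly measurable. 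To get empirical measurability in the sense of Definition \ref{measurable}, I would plug in $\mu = e_n(z_1,\ldots,z_n)$, so $e_n(z)(B(x,r')) = \frac{1}{n}\sum_j 1_{\{|z_j - x|\le r'\}}$, which is manifestly jointly Borel measurable in $(z_1,\ldots,z_n,x)$; intersecting with the rationals as above yields joint measurability of $(z,x,y)\mapsto r(e_n(z),x,y)$ for each $n$.

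For the continuity claim, fix $x\in\R^d$ and $y$, and suppose $\mu^n \to \mu$ weakly with $\mu \sim \L$. Write $G_\mu(r') := \mu(B(x,r'))$ and $G_n(r') := \mu^n(B(x,r'))$; these are nondecreasing and $r(\mu,x,y)$, $r(\mu^n,x,y)$ are their generalized inverses evaluated at $y$. The crucial point is that $\mu\sim\L$ forces $G_\mu$ to be \emph{strictly increasing and continuous} on the range of radii where $0 < G_\mu(r') < 1$: continuity holds because $\mu(\partial B(x,r')) = 0$ for every $r'$ (spheres are $\L$-null), and strict monotonicity holds because annuli $B(x,r'')\setminus B(x,r')$ have positive Lebesgue measure, hence positive $\mu$-measure. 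Weak convergence $\mu^n\to\mu$ together with $\mu(\partial B(x,r'))=0$ gives $G_n(r')\to G_\mu(r')$ for every fixed $r'>0$. Then a standard argument for convergence of quantile functions of a pointwise-convergent sequence of distribution functions whose limit is continuous and strictly increasing at the relevant level yields $r(\mu^n,x,y)\to r(\mu,x,y)$: given $\epsilon>0$, strict monotonicity gives $G_\mu(r(\mu,x,y)-\epsilon) < y < G_\mu(r(\mu,x,y)+\epsilon)$ (for $y$ in the open range; the boundary cases $y$ near $0$ or attaining $1$ need a separate, easy check), and pointwise convergence then pins $r(\mu^n,x,y)$ into $(r(\mu,x,y)-\epsilon, r(\mu,x,y)+\epsilon)$ for large $n$.

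I expect the main obstacle to be the bookkeeping around degenerate levels of $y$ — namely handling $y \le 0$ (where $r=0$ trivially), $y$ very small, or the possibility that $\mu(B(x,r'))$ never reaches $y$ (giving $r = +\infty$) — and making sure the argument is uniform enough that these edge cases do not break pointwise convergence; but under $\mu\sim\L$ one has $\mu(B(x,r'))\uparrow 1$ as $r'\to\infty$, so for $y\in(0,1)$ everything is well inside the strictly-monotone continuous regime and the pathologies are confined to $y\notin(0,1)$, which is handled directly. The other mildly delicate point is confirming joint Borel measurability of $(\mu,x)\mapsto\mu(B(x,r'))$ on $\P(\R^d)$ with the weak topology — this follows from a monotone class / bounded pointwise limit argument approximating $1_{B(x,r')}$ by continuous functions — but since for empirical measurability we only ever evaluate at $\mu = e_n(z)$, even this can be bypassed in favor of the explicit sum formula above.
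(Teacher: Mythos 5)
Your proof is correct, and while the measurability half is essentially the paper's argument (the paper writes the superlevel set $\{r(e_n(z),x,y)>c\}=\{\frac1n\sum_i 1_{B(x,c)}(z_i)<y\}$ directly, you write sublevel sets as a countable union over rational radii — the same idea, and your rational-radii reduction in fact needs only monotonicity of $r'\mapsto\mu(B(x,r'))$, not right-continuity), the continuity half takes a genuinely different and more elementary route. The paper fixes $\epsilon>0$, uses continuity of the inverse function $r(\mu,x,\cdot)$ (coming from $r'\mapsto\mu(B(x,r'))$ being continuous and strictly increasing when $\mu\sim\L$) to pick $\delta$, and then invokes Theorem 4.2 of Rao to get \emph{uniform} convergence $\sup_{(x,r)}|\mu_n(B(x,r))-\mu(B(x,r))|<\delta$ over all balls, from which the two-sided bound $r(\mu,x,y\pm\delta)$ sandwiches $r(\mu_n,x,y)$. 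You instead use only \emph{pointwise} convergence $\mu_n(B(x,r'))\to\mu(B(x,r'))$ at the two radii $r\pm\epsilon$, which follows from the portmanteau theorem since spheres are $\L$-null and $\mu\ll\L$, combined with the standard quantile-pinching argument (strict monotonicity of $G_\mu$, which correctly uses the full equivalence $\mu\sim\L$ for positivity on annuli, gives $G_\mu(r-\epsilon)<y<G_\mu(r+\epsilon)$). Your version avoids citing Rao's uniform-convergence theorem entirely and is self-contained; the paper's version buys uniformity over $x$ and over all convex sets, which it does not need inside this lemma but does reuse in the surrounding proof of Proposition \ref{pr:flocking}, so citing Rao there costs the paper nothing extra. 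Your handling of the degenerate levels is also consistent with the paper's implicit restriction $y\in(0,1)$ (the only case used, with $y=\eta$), where $r(\mu,x,y)$ is finite and the pinching argument applies without pathology.
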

\begin{proof}
To prove measurability, note that for any $c > 0$
\begin{align*}
\left\{(z,x,y) : r(e_n(z),x,y) > c\right\} = \left\{(z,x,y) : \frac{1}{n}\sum_{i=1}^n1_{B(x,c)}(z_i) < y\right\}
\end{align*}
is clearly a Borel set in $(\R^d)^n \times \R^d \times (0,1)$ for each $n$. To prove continuity, let $\mu_n \rightarrow \mu$ weakly in $\P(\R^d)$ with $\mu \sim \L$. Let $\epsilon > 0$. Since $\mu \sim \L$, the map $r \mapsto \mu(B(x,r))$ is continuous and strictly increasing. Thus the inverse function $r(\mu,x,\cdot)$ is also continuous, and we may find $\delta > 0$ such that $|r(\mu,x,y) - r(\mu,x,z)| < \epsilon$ whenever $|z-y| \le \delta$. Theorem 4.2 of \cite{rao-weakuniformconvergence} tells us that $\mu_n(B) \rightarrow \mu(B)$ uniformly over measurable convex sets $B$, since $\mu \ll \L$. Hence, for $n$ sufficiently large,
\[
\sup_{(x,r) \in \R^d \times (0,\infty)}\left|\mu(B(x,r)) - \mu_n(B(x,r)) \right| < \delta.
\]
Thus, for sufficiently large $n$,
\begin{align*}
r(\mu_n,x,y) &= \inf\left\{r' > 0 : \mu(B(x,r')) \ge y + (\mu-\mu_n)(B(x,r'))\right\} \\
	&\ge \inf\left\{r' > 0 : \mu(B(x,r')) \ge y - \delta\right\} \\
	&= r(\mu,x,y-\delta) \ge r(\mu,x,y) - \epsilon,
\end{align*}
and similarly
\[
r(\mu_n,x,y) \le \inf\left\{r' > 0 : \mu(B(x,r')) \ge y + \delta \right\} = r(\mu,x,y + \delta) \le r(\mu,x,y) + \epsilon.
\]
\end{proof}

\section{Proofs of existence and uniqueness theorems} \label{section-mfgproof}
This section is devoted to the proofs of the existence and uniqueness results of Theorems \ref{existence} and \ref{uniqueness}. Throughout the section, we work with the canonical probability space described in the second paragraph of Section \ref{section-mfg}. Since BSDEs will be used repeatedly, it is important to note that the classical existence, uniqueness, and comparison results for BSDEs do indeed hold in our setting, despite the fact that $\mathbb{F}$ is not the Brownian filtration. The purpose of working with the Brownian filtration is of course for martingale representation, which we still have with our slightly larger filtration: It follows from Theorem 4.33 of \cite{jacodshiryaevbook}, for example, that every square integrable $\mathbb{F}$-martingale $(M_t)_{0 \le t \le T}$ admits the representation $M_t = M_0 + \int_0^t \phi_s dW_s$ for some $\phi \in \mathbb{H}^{2,d}$. However, note that in our case the initial value of the solution of a BSDE is random, since $\F_0$ is not trivial.

To find a fixed point for the law of the control, we will make use of the space $\M$ of positive Borel measures $\nu$ on $[0,T] \times \P(A)$ (using the weak topology on $\P(A)$) whose first projection is Lebesgue measure; that is, $\nu([s,t] \times \P(A)) = t-s$ for $0 \le s \le t \le T$. Endow $\M$ with the weakest topology making the map $\nu \mapsto \int\phi\,d\nu$ continuous for each bounded measurable function $\phi : [0,T] \times \P(A) \rightarrow \R$ for which $\phi(t,\cdot)$ is continuous for each $t$. This is known as the stable topology, which was studied thoroughly by Jacod and M\'{e}min in \cite{jacodmemin-stable}. In particular, since $A$ is a compact metrizable space, so is $\P(A)$, and thus so is $\M$. Note that a measure $\nu \in \M$ disintegrates into $\nu(dt,dq) = \nu_t(dq)dt$, where the measurable map $[0,T] \ni t \mapsto \nu_t \in \P(\P(A))$ is uniquely determined up to almost everywhere equality. For any bounded measurable function $F : \P(A) \rightarrow \R^k$, we extend $F$ to $\P(\P(A))$ in the natural way by defining
\[
F(\nu) := \int_{\P(A)}\nu(dq)F(q).
\]
In this way, $F(\delta_q) = F(q)$ for $q \in \P(A)$.

\begin{remark}
Because of condition (S.5), the aforementioned convention will not lead to any confusion regarding the meaning of $H(t,x,\mu,\nu,z)$, for $\nu \in \P(\P(A))$. In particular, it is consistent with the relationship
\[
H(t,x,\mu,\nu,a) := \sup_{a \in A}h(t,x,\mu,\nu,z,a),
\]
since the only dependence of $h$ on $\nu$ is outside of the supremum.
\end{remark}

For each $(\mu,\nu) \in \P_\psi(\C) \times \M$, we now construct the corresponding control problem. The standing assumptions (S) are in force throughout, and the following construction is valid without any of the other assumptions. Recall the definitions of $h$ and $H$ from (\ref{hdef}) in Section \ref{section-mfg}. That $(t,x,z) \mapsto H(t,x,\mu,\nu_t,z)$ is jointly measurable for each $(\mu,\nu)$ follows, for example, from the measurable maximum Theorem 18.19 of \cite{aliprantisborder}. Boundedness of $\sigma^{-1}b$ guarantees that $H$ is uniformly Lipschitz in $z$. Since $\mu \in \P_\psi(\C)$, it follows from assumptions (S.2) and (S.4) that $g(X,\mu) \in L^2(P)$ and that $(H(t,X,\mu,\nu_t,0))_{0 \le t \le T} = (\sup_af(t,X,\mu,\nu_t,a))_{0 \le t \le T} \in \mathbb{H}^{2,1}$. Hence the classical result of Pardoux and Peng \cite{pardouxpengbsde} (or rather a slight extension thereof, as remarked above) applies, and there exists a unique solution $(Y^{\mu,\nu},Z^{\mu,\nu}) \in \mathbb{H}^{2,1} \times \mathbb{H}^{2,d}$ of the BSDE
\begin{align}
Y^{\mu,\nu}_t = g(X,\mu) + \int_t^TH(s,X,\mu,\nu_s, Z^{\mu,\nu}_s)ds - \int_t^TZ^{\mu,\nu}_sdW_s. \label{BSDEvalue}
\end{align}
For each $\alpha \in \A$, we may similarly solve the BSDE
\begin{align*}
Y^{\mu,\nu,\alpha}_t &= g(X,\mu) + \int_t^Th(s,X,\mu,\nu_s, Z^{\mu,\nu,\alpha}_s,\alpha_s)ds - \int_t^TZ^{\mu,\nu,\alpha}_sdW_s \\
	&= g(X,\mu) + \int_t^Tf(s,X,\mu,\nu_s,\alpha_s)ds - \int_t^TZ^{\mu,\nu,\alpha}_sdW^{\mu,\alpha}_s.
\end{align*}
Since $W^{\mu,\alpha}$ is a Wiener process under $P^{\mu,\alpha}$ and $Y^{\mu,\alpha}$ is adapted, we get
\[
Y^{\mu,\nu,\alpha}_t = \E^{\mu,\alpha}\left[\left.g(X,\mu) + \int_t^Tf(s,X,\mu,\nu_s,\alpha_s)ds\right| \F_t\right].
\]
In particular, $\E[Y^{\mu,\nu,\alpha}_0] = J^{\mu,\nu}(\alpha)$.

It is immediate from the comparison principle for BSDEs (e.g. Theorem 2.2 of \cite{bsdefinance}) that $\E[Y^{\mu,\nu}_0] \ge \E[Y^{\mu,\nu,\alpha}_0] = J^{\mu,\nu}(\alpha)$ for each $\alpha \in \A$, and thus $\E[Y^{\mu,\nu}_0] \ge V^{\mu,\nu}$. By a well-known measurable selection theorem (e.g. Theorem 18.19 of \cite{aliprantisborder}), there exists a function $\hat{\alpha} : [0,T] \times \C \times \P_\psi(\C) \times \R^d \rightarrow A$ such that
\begin{align}
\hat{\alpha}(t,x,\mu,z) \in A(t,x,\mu,z), \quad \text{ for all } (t,x,\mu,z), \label{selection}
\end{align}
and such that for each $\mu$ the map $(t,x,z) \mapsto \hat{\alpha}(t,x,\mu,z)$ is jointly measurable with respect to the progressive $\sigma$-field on $[0,T] \times \C$ and $\B(\R^d)$. Letting 
\begin{align}
\alpha^{\mu,\nu}_t := \hat{\alpha}(t,X,\mu,Z^{\mu,\nu}_t),  \label{selection2}
\end{align}
the uniqueness of solutions of BSDEs implies $Y^{\mu,\nu}_t = Y^{\mu,\nu,\alpha^{\mu,\nu}}_t$, which in turn implies $V^{\mu,\nu} = J^{\mu,\nu}(\alpha^{\mu,\nu})$ since $J^{\mu,\nu}(\alpha^{\mu,\nu}) \le V^{\mu,\nu}$.

The process $\alpha^{\mu,\nu}$ is an optimal control, but so is any process in the set
\begin{align}
\A(\mu,\nu) := \left\{ \alpha \in \A : \alpha_t \in A(t,X,\mu,Z^{\mu,\nu}_t) \ dt \times dP-a.e.\right\}. \label{alphamudef}
\end{align}
Define $\Phi : \P_\psi(\C) \times \A \rightarrow \P(\C) \times \M$ by
\[
\Phi(\mu,\alpha) := (P^{\mu,\alpha} \circ X^{-1}, \delta_{P^{\mu,\alpha} \circ \alpha^{-1}_t}(dq)dt)
\]
The goal now is to find a point $(\mu,\nu) \in \P_\psi(\C) \times \M$ for which there exists $\alpha \in \A(\mu,\nu)$ such that $(\mu,\nu) = \Phi(\mu,\alpha)$. In other words, we seek a fixed point of the set-valued map $(\mu,\nu) \mapsto \Phi(\mu,\A(\mu,\nu)) := \{\Phi(\mu,\alpha) : \alpha \in \A(\mu,\nu)\}$. Note that under condition (U.1), $\alpha^{\mu,\nu}$ is the unique element of $\A(\mu,\nu)$ (up to almost everywhere equality), and this reduces to a fixed point problem for a single-valued function.

\begin{remark} \label{nonempty}
It is worth emphasizing that the preceding argument demonstrates that the set $\A(\mu,\nu)$ is always nonempty, under only the standing assumptions (S).
\end{remark}

\begin{remark} \label{zdifficulty}
The main difficulty in the analysis is the adjoint process $Z^{\mu,\nu}$. Note that for each $(\mu,\nu)$ there exists a progressively measurable function $\zeta_{\mu,\nu} : [0,T] \times \C \rightarrow \R^d$ such that $Z^{\mu,\nu}_t = \zeta_{\mu,\nu}(t,X)$. If we choose a measurable selection $\hat{\alpha}$ as in (\ref{selection}), any weak solution of the following McKean-Vlasov SDE provides a solution of the MFG:
\[
\begin{cases}
dX_t \!\!\!\!\!&= b(t,X,\mu,\hat{\alpha}(t,X,\mu,\zeta_{\mu,\nu}(t,X)))dt + \sigma(t,X)dW_t, \\
X &\sim \mu, \ \mu \circ (\hat{\alpha}(t,\cdot,\mu,\zeta_{\mu,\nu}(t,\cdot)))^{-1} = \nu_t \ a.e.
\end{cases}
\]
The notation $X \sim \mu$ means that $\mu$ should equal the law of $X$. This map $\zeta_{\mu,\nu}$ is typically quite inaccessible, which is why we do not appeal to any existing results on McKean-Vlasov equations, even when $\nu$ is not present. All such results require some kind of continuity of the map
\[
(x,\mu) \mapsto b(t,x,\mu,\hat{\alpha}(t,x,\mu,\zeta_{\mu,\nu}(t,x))),
\]
as far as the authors know. It is possible to make assumptions on the data which would guarantee, for example, that $\zeta_{\mu,\nu}(t,\cdot)$ is continuous, but continuous dependence on $\mu$ would be a much trickier matter.
\end{remark}

\subsection{Some results of set-valued analysis}
We precede the main proofs with some useful lemmas. Without assumption (U), the optimal controls need not be unique, and thus we will need a fixed point theorem for set-valued maps. We first summarize some terminology from set-valued analysis.

For a point $y$ in a metric space $(E,d)$ and $\delta > 0$, let $B(y,\delta)$ denote the open ball of radius $\delta$ centered at $y$. Similarly, for $F \subset E$, let $B(F,\delta) = \{x \in E : \inf_{y \in F}d(x,y) < \delta\}$. For two subsets $F,G$ of $E$, we (abusively) define
\[
d(F,G) := \sup_{y \in G}d(F,y) = \sup_{y \in G}\inf_{x \in F}d(x,y).
\]
Note that $d$ is not symmetric. If $K$ is another metric space, a set-valued function $\Gamma : K \rightarrow 2^E$ is said to be \emph{upper hemicontinuous} at $x \in K$ if for all $\epsilon > 0$ there exists $\delta > 0$ such that $\Gamma(B(x,\delta)) \subset B(\Gamma(x),\epsilon)$. It is straightforward to prove that $\Gamma$ is upper hemicontinuous at $x \in K$ if and only if $d(\Gamma(x),\Gamma(x_n)) \rightarrow 0$ for every sequence $x_n$ converging to $x$.

In order to relax somewhat the convexity assumption of Kakutani's fixed point theorem, we adapt results of Cellina in \cite{cellina-approximation} to derive a slight generalization of Kakutani's theorem, which will assist in the proof of Theorem \ref{existence}.

\begin{proposition} \label{fixedpointtheorem}
Let $K$ be a compact convex metrizable subset of a locally convex topological vector space, and let $E$ be a normed vector space. Suppose $\Gamma : K \rightarrow 2^E$ is upper hemicontinuous and has closed and convex values, and suppose $\phi : K \times E \rightarrow K$ is continuous. Then there exists $x \in K$ such that $x \in \phi(x,\Gamma(x)) := \{\phi(x,y) : y \in \Gamma(x)\}$.
\end{proposition}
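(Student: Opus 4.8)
The plan is to approximate the set-valued map $\Gamma$ by continuous single-valued maps, solve the resulting ordinary fixed point problems, and then pass to the limit. The reason Kakutani's theorem cannot be applied directly to the self-map $x \mapsto \phi(x,\Gamma(x))$ of $K$ is that, although these sets lie in the compact set $K$, the section $\phi(x,\cdot)$ need not be affine, so $\phi(x,\Gamma(x))$ is generally not convex. Instead, for each $n \in \N$ I would invoke the graph-approximation theorem for upper hemicontinuous maps with nonempty closed convex values (adapted from \cite{cellina-approximation}) to produce a continuous map $f_n : K \to E$ whose graph lies within $1/n$ of $\mathrm{graph}(\Gamma)$, with $f_n(x)$ in the closed convex hull of $\Gamma(B(x,1/n))$ for every $x$. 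Then $g_n := \phi(\cdot, f_n(\cdot)) : K \to K$ is continuous, and since $K$ is a compact convex subset of a locally convex topological vector space, the Schauder--Tychonoff fixed point theorem yields $x_n \in K$ with $x_n = \phi(x_n, f_n(x_n))$.

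To conclude I would pass to the limit. Since $K$ is compact and metrizable, a subsequence satisfies $x_n \to x^* \in K$; the graph estimate lets one choose $(u_n,v_n)$ with $v_n \in \Gamma(u_n)$, $d(x_n,u_n) \to 0$, and $\|f_n(x_n) - v_n\| \to 0$, so $u_n \to x^*$, and upper hemicontinuity of $\Gamma$ at $x^*$ forces $\mathrm{dist}(v_n, \Gamma(x^*)) \to 0$. The crucial remaining step is to extract from $(v_n)$ a further subsequence converging to some $y^* \in \Gamma(x^*)$: granting relative compactness of $\{v_n\}$, the fact that the graph of an upper hemicontinuous closed-valued map is closed gives $y^* \in \Gamma(x^*)$, and then continuity of $\phi$ gives $\phi(x^*,y^*) = \lim \phi(x_n, f_n(x_n)) = \lim x_n = x^*$, i.e.\ $x^* \in \phi(x^*,\Gamma(x^*))$, as desired. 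For the relative compactness of $\{v_n\}$ one localizes: since $u_n \to x^*$, upper hemicontinuity pins the $v_n$ into a neighborhood of $\Gamma(x^*)$, and one exploits that $\Gamma$ carries the compact set $\{x^*\} \cup \{u_n : n \ge 1\}$ onto a relatively compact set (which is the point at which one really wants $\Gamma$ to have compact, not merely closed, values).

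The step I expect to be the main obstacle is exactly this last compactness issue. In a general normed space the values $\Gamma(x)$ are only closed and convex, so a priori the Cellina approximants $f_n(x_n)$---and hence the $v_n$---need not remain in a compact set, and the limiting argument as sketched requires care; in the mean field game applications, however, the relevant values are sets of admissible controls in a suitably compact topology, so this difficulty disappears and the argument runs cleanly. The other ingredients---Cellina's approximation and the Schauder--Tychonoff theorem---are standard, and the verification that $\mathrm{graph}(\Gamma)$ is closed and that $g_n$ is continuous is routine. This proposition will then be applied in the proof of Theorem \ref{existence} with $\Gamma$ the (set-valued) map of optimal controls and $\phi$ built from the map $\Phi$.
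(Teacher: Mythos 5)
Your route is essentially the paper's: Cellina's graph-approximation theorem \cite{cellina-approximation} to get continuous maps $\gamma_n:K\to E$ whose graphs lie within $1/n$ of $\mathrm{Gr}(\Gamma)$, Schauder's theorem applied to the continuous self-map $x\mapsto\phi(x,\gamma_n(x))$ to produce $x_n=\phi(x_n,\gamma_n(x_n))$, and a passage to the limit using closedness of the graph and continuity of $\phi$. The one step you explicitly leave open---extracting a convergent subsequence of the points $(x_n,\gamma_n(x_n))$ toward a point of $\mathrm{Gr}(\Gamma)$---is precisely the step the paper settles by citation: it invokes Lemma 17.8 and Theorem 17.10 of \cite{aliprantisborder} to conclude that $\Gamma(K)$ is compact and $\mathrm{Gr}(\Gamma)$ is closed, hence $\mathrm{Gr}(\Gamma)\subset K\times\Gamma(K)$ is compact; since $d\bigl((x_n,\gamma_n(x_n)),\mathrm{Gr}(\Gamma)\bigr)<1/n$, compactness of the whole graph immediately yields a subsequence converging to some $(x,y)\in\mathrm{Gr}(\Gamma)$, and continuity of $\phi$ gives $x=\phi(x,y)$. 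This is a bit more direct than your localization through auxiliary points $u_n\to x^*$, $v_n\in\Gamma(u_n)$ and upper hemicontinuity at $x^*$, which becomes unnecessary once the full graph is known to be compact (though your localized variant would need only compactness of the single value $\Gamma(x^*)$ rather than of $\Gamma(K)$).

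So, relative to the statement as written (closed convex values in a normed space), your proposal does not close the compactness step, and in that sense it is incomplete; but your instinct that this is the delicate point is accurate, because the paper's own resolution leans on the same property: Lemma 17.8 of \cite{aliprantisborder} (image of a compact set under an upper hemicontinuous correspondence is compact) is a statement about compact-valued correspondences, and compactness of $\Gamma(K)$ does not follow from upper hemicontinuity with merely closed values (a constant correspondence whose value is the closed unit ball of an infinite-dimensional $E$ already fails it). In other words, both your argument and the paper's require, in effect, compact values (or at least compactness of $\Gamma(K)$ or of $\mathrm{Gr}(\Gamma)$), a property that must be checked when the proposition is applied with $\Gamma=\A(\cdot)$ as in Lemma \ref{auhc}; if you want your write-up to be self-contained, state this extra compactness hypothesis explicitly and verify it in the application rather than deferring it informally.
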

\begin{proof}
Let $Gr(\Gamma) := \{(x,y) \in K \times E : y \in \Gamma(x)\}$ denote the graph of $\Gamma$. By Cellina's result (Theorem 1 of \cite{cellina-approximation}), for each positive integer $n$ we may find a continuous (singe-valued) function $\gamma_n : K \rightarrow E$ such that the graph of $\gamma_n$ is contained in the $1/n$ neighborhood of $Gr(\Gamma)$. That is, for all $x \in K$,
\[
d((x,\gamma_n(x)),Gr(\Gamma)) := \inf\left\{d\left((x,\gamma_n(x)), (y,z)\right) : y \in K, z \in \Gamma(y)\right\} < 1/n,
\]
where $d$ denotes some metric on $K \times E$. Since $K \ni x \mapsto \phi(x,\gamma_n(x)) \in K$ is continuous, Schauder's fixed point theorem implies that there exists $x_n \in K$ such that $x_n = \phi(x_n,\gamma_n(x_n))$. By Lemma 17.8 and Theorem 17.10 of \cite{aliprantisborder}, $\Gamma(K) := \bigcup_{x \in K}\Gamma(x) \subset X$ is compact and $Gr(\Gamma)$ is closed. Thus $Gr(\Gamma) \subset K \times \Gamma(K)$ is compact. Since $d((x_n,\gamma_n(x)),Gr(\Gamma)) \rightarrow 0$ and $Gr(\Gamma)$ is compact, there exist a subsequence $x_{n_k}$ and a point $(x,y) \in Gr(\Gamma)$ such that $(x_{n_k},\gamma_{n_k}(x_{n_k})) \rightarrow (x,y)$. This completes the proof, since $y \in \Gamma(x)$ and since continuity of $\phi$ yields
\[
x = \lim x_{n_k} = \lim \phi(x_{n_k},\gamma_{n_k}(x_{n_k})) = \phi(x,y).
\]
\end{proof}

A special case of Berge's maximum theorem (17.31 of \cite{aliprantisborder}) will be useful:

\begin{theorem}[Berge's Theorem] \label{maximizer}
Let $E$ be a metric space, $K$ a compact metric space, and $\phi : E \times K \rightarrow \R$ a continuous function. Then $\gamma(x) := \max_{y \in K}\phi(x,y)$ is continuous, and the following set-valued function is upper hemicontinuous and compact-valued:
\[
E \ni x \mapsto \arg\max_{y \in K}\phi(x,y) := \{y \in K : \gamma(x) = \phi(x,y)\} \in 2^K
\]
\end{theorem}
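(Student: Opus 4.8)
The plan is to prove the two assertions separately. Write $M(x) := \arg\max_{y \in K}\phi(x,y) = \{y \in K : \phi(x,y) = \gamma(x)\}$ for the argmax correspondence. That $M(x)$ is nonempty and compact is immediate: for fixed $x$ the map $y \mapsto \phi(x,y)$ is continuous on the compact space $K$, hence attains its maximum, so $M(x) \ne \emptyset$; and $M(x)$, being the preimage of the singleton $\{\gamma(x)\}$ under this continuous map, is closed in $K$, hence compact.

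For continuity of $\gamma$, I would fix a sequence $x_n \to x$ in $E$ and show $\liminf_n \gamma(x_n) \ge \gamma(x)$ and $\limsup_n \gamma(x_n) \le \gamma(x)$. The first is easy: picking any $y^\ast \in M(x)$, joint continuity of $\phi$ gives $\gamma(x_n) \ge \phi(x_n,y^\ast) \to \phi(x,y^\ast) = \gamma(x)$. For the second, choose $y_n \in M(x_n)$, so $\gamma(x_n) = \phi(x_n,y_n)$; passing to a subsequence along which $\gamma(x_n)$ converges to $\limsup_n\gamma(x_n)$ and, using compactness of $K$, along which $y_n \to y_0$ for some $y_0 \in K$, joint continuity of $\phi$ yields $\limsup_n \gamma(x_n) = \phi(x,y_0) \le \gamma(x)$. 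Hence $\gamma(x_n) \to \gamma(x)$.

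For the upper hemicontinuity of $M$, I would first observe that $M$ has closed graph: if $(x_n,y_n) \to (x,y)$ with $y_n \in M(x_n)$, then $\phi(x_n,y_n) = \gamma(x_n)$ for all $n$, and letting $n \to \infty$ using joint continuity of $\phi$ together with the continuity of $\gamma$ just established gives $\phi(x,y) = \gamma(x)$, i.e. $y \in M(x)$. Since $M$ takes values in the compact metric space $K$, I would then invoke the sequential characterization of upper hemicontinuity recalled just before Proposition \ref{fixedpointtheorem}: were $M$ not upper hemicontinuous at some $x$, there would exist $x_n \to x$, $\epsilon > 0$, and $y_n \in M(x_n)$ with $\inf_{z \in M(x)}d(z,y_n) \ge \epsilon$; extracting a convergent subsequence $y_{n_k} \to y_0$ by compactness of $K$, the closed graph property forces $y_0 \in M(x)$, contradicting the lower bound since $d(y_0,y_{n_k}) \to 0$. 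The argument is entirely standard; the only points requiring a little care are that it is the compactness of $K$ (not of $E$) that supplies the convergent subsequences, and that the passage from closed graph to upper hemicontinuity relies on the range being compact.
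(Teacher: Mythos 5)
Your proof is correct. Note, however, that the paper does not prove this statement at all: it is quoted as a special case of Berge's maximum theorem and justified by a citation (Theorem 17.31 of Aliprantis--Border), so there is no internal argument to compare against. What you supply instead is the standard self-contained sequential proof, and it is sound: nonemptiness and compactness of the argmax set from continuity of $\phi(x,\cdot)$ on the compact $K$; continuity of $\gamma$ via the two-sided $\liminf$/$\limsup$ argument, where the $\limsup$ bound correctly uses compactness of $K$ to extract a convergent subsequence of maximizers; and upper hemicontinuity via the closed-graph property (which needs the continuity of $\gamma$ you established first) combined with the fact that a closed-graph correspondence into a compact space is upper hemicontinuous, argued by contradiction exactly as in the sequential characterization the paper records before Proposition \ref{fixedpointtheorem}. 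You are also right to flag the two places where care is needed: the subsequences come from compactness of $K$, not $E$, and the passage from closed graph to upper hemicontinuity genuinely requires the compact range (it is false for general closed-graph correspondences). The trade-off is simply self-containedness versus brevity: your argument makes the result independent of the cited general theorem (which handles a variable constraint correspondence, not needed here since the constraint is constantly $K$), at the cost of a page of routine analysis the authors chose to outsource.
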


\subsection{Proof of Theorem \ref{existence} (existence)} \label{section-existenceproof}
We now turn toward the proof of Theorem \ref{existence}. In what follows, we always use the topology $\tau_\psi(\C)$ on $\P_\psi(\C)$, except when stated otherwise. Despite its simplicity, we state the following result as a lemma for later references.  

\begin{lemma} \label{RND}
Let $(E,\mathcal{E})$ and $(F,\F)$ be measurable spaces, and let $\mu,\nu \in \P(E)$ with $\nu \ll \mu$. If $X:E \rightarrow F$ is measurable, then 
\[
\frac{d\nu \circ X^{-1}}{d\mu \circ X^{-1}} \circ X = \E^\mu\left[\left. \frac{d\nu}{d\mu} \right| X \right] \ \ \mu-a.s.
\]
\end{lemma}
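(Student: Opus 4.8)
The plan is to verify the claimed identity by checking the defining property of conditional expectation: the right-hand side is, by definition, the (a.s.\ unique) $\sigma(X)$-measurable random variable $g$ with $\E^\mu[g\,\mathbf{1}_B] = \E^\mu[(d\nu/d\mu)\,\mathbf{1}_B]$ for every $B \in \sigma(X)$. So it suffices to show the left-hand side, namely $\frac{d\nu\circ X^{-1}}{d\mu\circ X^{-1}}\circ X$, has these two properties. First I would note that this left-hand side is manifestly $\sigma(X)$-measurable, since it is a measurable function of $X$; also $\nu\circ X^{-1}\ll\mu\circ X^{-1}$ because $\nu\ll\mu$, so the Radon--Nikodym derivative $d(\nu\circ X^{-1})/d(\mu\circ X^{-1})$ exists on $(F,\F)$.

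The substantive step is the integral identity. Every $B\in\sigma(X)$ has the form $B = X^{-1}(C)$ for some $C\in\F$. Writing $h := d(\nu\circ X^{-1})/d(\mu\circ X^{-1})$, I would compute, using the change-of-variables (image measure) formula twice and the definition of $h$:
\[
\E^\mu\!\left[(h\circ X)\,\mathbf{1}_{X^{-1}(C)}\right] = \int_F h\,\mathbf{1}_C\,d(\mu\circ X^{-1}) = \int_C h\,d(\mu\circ X^{-1}) = (\nu\circ X^{-1})(C) = \nu(X^{-1}(C)).
\]
On the other hand, directly from the definition of $d\nu/d\mu$,
\[
\E^\mu\!\left[\tfrac{d\nu}{d\mu}\,\mathbf{1}_{X^{-1}(C)}\right] = \nu(X^{-1}(C)).
\]
Since these agree for all $C\in\F$, i.e.\ for all $B\in\sigma(X)$, and $h\circ X$ is $\sigma(X)$-measurable, the a.s.\ uniqueness of conditional expectations gives $h\circ X = \E^\mu[\,d\nu/d\mu \mid X\,]$ $\mu$-a.s., which is the assertion.

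There is essentially no obstacle here: the only point requiring a word of care is the justification of the change-of-variables step $\E^\mu[(h\circ X)\mathbf{1}_{X^{-1}(C)}] = \int_F h\mathbf{1}_C\, d(\mu\circ X^{-1})$, which is the standard pushforward formula applied to the nonnegative (or integrable) measurable function $h\mathbf{1}_C$ on $(F,\F)$ — valid without any topological hypotheses, exactly as the statement requires. (One may first reduce to $h$ bounded by truncation and monotone convergence if one prefers to avoid invoking the pushforward formula for unbounded functions directly, but this is not really necessary.)
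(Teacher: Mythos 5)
Your proof is correct: checking $\sigma(X)$-measurability of $h\circ X$ (where $h = d(\nu\circ X^{-1})/d(\mu\circ X^{-1})$) and verifying $\E^\mu[(h\circ X)\mathbf{1}_{X^{-1}(C)}] = \nu(X^{-1}(C)) = \E^\mu[(d\nu/d\mu)\mathbf{1}_{X^{-1}(C)}]$ for all $C\in\F$ via the pushforward formula is exactly the standard argument, and the a.s.\ uniqueness of conditional expectation finishes it. The paper in fact states this lemma without proof (calling it simple), so there is nothing to compare against; your write-up supplies precisely the verification the authors had in mind, with the measurability, absolute continuity of the image measures, and the change-of-variables step all handled correctly.
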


\begin{lemma} \label{RNDbound}
For any $q \in \R$ with $|q| \ge 1$, we have (recall that $\X := P \circ X^{-1}$)
\begin{align}
M_q &:= \sup_{(\mu,\alpha) \in \P_\psi(\C) \times \A}\int\left(d\Phi(\mu,\alpha)/d\X\right)^qd\X < \infty. \label{mdef}
\end{align}
\end{lemma}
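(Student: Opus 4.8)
The plan is to reduce the path-space integral, via Lemma~\ref{RND}, to an expectation on $(\Omega,\F_T,P)$ of a power of a conditional expectation of the Girsanov density, and then to control that quantity using conditional Jensen's inequality together with the uniform boundedness of $\sigma^{-1}b$ from (S.2).

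First I would fix $(\mu,\alpha)\in\P_\psi(\C)\times\A$ and set $Z^{\mu,\alpha}:=dP^{\mu,\alpha}/dP$. Because $\sigma^{-1}b$ is bounded, Girsanov's theorem gives $P^{\mu,\alpha}\sim P$, hence $P^{\mu,\alpha}\circ X^{-1}\sim\X$ and the first component of $\Phi(\mu,\alpha)$ is absolutely continuous with respect to $\X$. Applying Lemma~\ref{RND} with the state process $X:\Omega\to\C$, with $\nu=P^{\mu,\alpha}$ and base measure $P$, yields
\[
\frac{d\Phi(\mu,\alpha)}{d\X}\circ X = \E^P\left[Z^{\mu,\alpha}\,\big|\,X\right]\quad P\text{-a.s.},
\]
and since $\X=P\circ X^{-1}$, a change of variables gives
\[
\int\left(\frac{d\Phi(\mu,\alpha)}{d\X}\right)^q d\X = \E^P\left[\left(\E^P\left[Z^{\mu,\alpha}\,\big|\,X\right]\right)^q\right].
\]

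Next, since $|q|\ge1$, the function $t\mapsto t^q$ is convex on $(0,\infty)$, so conditional Jensen gives $\left(\E^P[Z^{\mu,\alpha}\,|\,X]\right)^q\le\E^P[(Z^{\mu,\alpha})^q\,|\,X]$; taking expectations, the left-hand side above is at most $\E^P[(Z^{\mu,\alpha})^q]$. Then I would write $N_t:=\int_0^t\sigma^{-1}b(s,X,\mu,\alpha_s)\,dW_s$, so that $Z^{\mu,\alpha}=\mathcal{E}(N)_T$ and
\[
(Z^{\mu,\alpha})^q = \mathcal{E}(qN)_T\,\exp\left(\frac{q^2-q}{2}\langle N\rangle_T\right).
\]
By (S.2) there is a constant $C_0$ with $|\sigma^{-1}b|\le C_0$, so $\langle N\rangle_T\le C_0^2T$; since $q^2-q\ge0$ whenever $|q|\ge1$, the exponential factor is bounded by $\exp\left(\frac{q^2-q}{2}C_0^2T\right)$. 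Moreover $\langle qN\rangle_T\le q^2C_0^2T$ is bounded, so Novikov's condition makes $\mathcal{E}(qN)$ a true martingale with mean $1$. Hence $\E^P[(Z^{\mu,\alpha})^q]\le\exp\left(\frac{q^2-q}{2}C_0^2T\right)$, a bound independent of $(\mu,\alpha)$, and taking the supremum proves $M_q\le\exp\left(\frac{q^2-q}{2}C_0^2T\right)<\infty$.

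The argument is essentially routine; the only points needing care are the correct bookkeeping in applying Lemma~\ref{RND} (identifying the base measure and the pushforward correctly, and checking the absolute continuity so that the Radon--Nikodym derivative is well defined) and noting that $t\mapsto t^q$ is convex precisely for $q\le0$ or $q\ge1$ --- the same condition $|q|\ge1$ that makes $q^2-q\ge0$ --- so that the Jensen step and the sign of the exponent in the stochastic exponential identity align.
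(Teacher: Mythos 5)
Your proof is correct and follows essentially the same route as the paper: reduce via Lemma \ref{RND} and (conditional) Jensen's inequality to bounding $\E[(dP^{\mu,\alpha}/dP)^q]$, then use the identity $\mathcal{E}(N)_T^q=\mathcal{E}(qN)_T\exp\bigl(q(q-1)[N,N]_T/2\bigr)$ together with $[N,N]_T\le C_0^2T$ and $q(q-1)\ge0$. The only difference is that you spell out the Novikov/martingale step and the change of variables, which the paper leaves implicit.
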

\begin{proof}
Recall that $\sigma^{-1}b$ is bounded, say by $c > 0$. Fix $(\mu,\alpha) \in \P_\psi(\C) \times \A$. Letting $N_t := \int_0^t\sigma^{-1}b(t,X,\mu,\alpha_t)dW_t$, we see that $[N,N]_T \le Tc^2$, and thus, since $q(q-1) \ge 0$,
\[
\mathcal{E}(N)_T^q = \mathcal{E}(qN)_T\exp\left(q(q-1)[N,N]_T/2\right) \le \mathcal{E}(qN)_T\exp\left(q(q-1)Tc^2/2\right).
\]
Hence, Lemma \ref{RND} and Jensen's inequality yield
\begin{align*}
\int\left(d\Phi(\mu,\alpha)/d\X\right)^qd\X &= \E\left[\E\left[\left.dP^{\mu,\alpha}/dP\right| X\right]^q\right] \le \E\left[(dP^{\mu,\alpha}/dP)^q\right] \\
	&\le \exp\left(q(q-1)Tc^2/2\right).
\end{align*}
Since this bound is independent of $(\mu,\alpha)$, we indeed have $M_q < \infty$.
\end{proof}

In terms of the notation from Lemma \ref{RNDbound}, let $M := \max(M_2,M_{-1})$. Let
\begin{align}
\Q := \left\{\mu \in \P_\psi(\C) : \mu \sim \X, \ \int(d\mu/d\X)^2d\X \le M, \ \int(d\X/d\mu)d\X \le M\right\}. \label{qdef}
\end{align}
By construction, the range of $\Phi$ is contained in $\Q \times \M$. Critical to our fixed point theorem is the following compactness result, which probably exists in various forms elsewhere in the literature. Part of the result may be found, for example, in Lemma 6.2.16 of \cite{dembozeitouni}. But, for lack of a concise reference, and to keep the paper fairly self-contained, we include a complete proof of the following:

\begin{proposition} \label{Qcompact}
The space $(\Q,\tau_\psi(\C))$ is convex, compact, and metrizable. Moreover, $\tau_1(\C)$ and $\tau_\psi(\C)$ induce the same topology on $\Q$.
\end{proposition}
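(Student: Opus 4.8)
The plan is to identify $\Q$ with a bounded convex subset of the separable Hilbert space $L^2(\X)$ through Radon--Nikodym densities, and to deduce all three properties (and the coincidence of the two topologies) from the corresponding facts about the weak topology there. Convexity of $\Q$ itself is immediate: for $\mu,\mu' \in \Q$ and $\lambda \in [0,1]$, the measure $\lambda\mu + (1-\lambda)\mu'$ is again equivalent to $\X$ with density $\lambda\, d\mu/d\X + (1-\lambda)\, d\mu'/d\X$, and the pointwise convexity of $t \mapsto t^2$ on $[0,\infty)$ and of $t \mapsto 1/t$ on $(0,\infty)$, followed by integration against $\X$, preserve the two moment bounds $\int (d\mu/d\X)^2\, d\X \le M$ and $\int (d\X/d\mu)\, d\X \le M$ defining $\Q$.

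Next I would let $\iota : \Q \to L^2(\X)$ send $\mu$ to the $\X$-class of $d\mu/d\X$. Since $\C$ is a separable metric space, $L^2(\X)$ is separable; $\iota$ is injective, and $\D := \iota(\Q)$ lies in the closed ball of radius $\sqrt{M}$, on which the weak topology is metrizable. The key claim is that $\iota$ is a homeomorphism from $(\Q, \tau_\psi(\C))$ onto $\D$ equipped with the weak topology, which at once yields metrizability of $(\Q,\tau_\psi(\C))$ and reduces compactness to weak compactness of $\D$. For continuity of $\iota^{-1}$: every $f \in B_\psi(\C)$ satisfies $|f| \le c\psi$ for some $c$, and $\int\psi^2\,d\X < \infty$ by (S.2), so $f \in L^2(\X)$ and $\int f\,d\mu = \langle f, d\mu/d\X\rangle_{L^2(\X)}$; hence weak convergence of the densities forces convergence of $\int f\,d\mu$. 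For continuity of $\iota$: given $g \in L^2(\X)$, approximate it in $L^2(\X)$ by bounded functions $g_k$ (which lie in $B_\psi(\C)$); the estimate $|\int (g-g_k)\,d\mu| \le \|g-g_k\|_{L^2(\X)}\sqrt{M}$, uniform over $\mu \in \Q$, exhibits $\mu \mapsto \int g\,d\mu = \langle g, d\mu/d\X\rangle$ as a uniform limit of $\tau_\psi(\C)$-continuous maps on $\Q$, hence $\tau_\psi(\C)$-continuous.

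For compactness it then suffices to show $\D$ is weakly compact; being a bounded convex subset of a Hilbert space, it is enough to show $\D$ is closed in $L^2(\X)$-norm, whereupon Mazur's theorem gives weak closedness and a bounded weakly closed subset of a (reflexive) Hilbert space is weakly compact. So let $g_n = d\mu_n/d\X \to g$ in $L^2(\X)$ and pass to an $\X$-a.e.\ convergent subsequence: then $g \ge 0$ a.e., $\int g\,d\X = \lim\int g_n\,d\X = 1$, and $\int g^2\,d\X = \lim\int g_n^2\,d\X \le M$; applying Fatou's lemma to $1/g_n \to 1/g$ (with $1/0 := \infty$) gives $\int (1/g)\,d\X \le \liminf\int (1/g_n)\,d\X \le M < \infty$, so $g > 0$ a.e.\ and $\int\psi\,g\,d\X < \infty$. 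Thus $\mu := g\,d\X$ lies in $\Q$ with $g = d\mu/d\X \in \D$, proving $\D$ is norm-closed.

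Finally, $\psi \ge 1$ gives $B_1(\C) \subseteq B_\psi(\C)$, so $\tau_1(\C) \subseteq \tau_\psi(\C)$ on all of $\P_\psi(\C)$; for the reverse inclusion on $\Q$, I truncate: for $f \in B_\psi(\C)$ with $|f| \le c\psi$, the bounded functions $f_k := (f\wedge k)\vee(-k)$ lie in $B_1(\C)$ and
\[
\sup_{\mu \in \Q}\left|\int f\,d\mu - \int f_k\,d\mu\right| \;\le\; \sup_{\mu \in \Q}\int_{\{c\psi > k\}} c\psi\,d\mu \;\le\; c\sqrt{M}\left(\int_{\{c\psi > k\}}\psi^2\,d\X\right)^{1/2},
\]
which tends to $0$ as $k \to \infty$ by dominated convergence, since $\psi^2 \in L^1(\X)$ and $\{c\psi > k\} \downarrow \emptyset$; hence each $\mu \mapsto \int f\,d\mu$ is $\tau_1(\C)$-continuous on $\Q$, forcing $\tau_\psi(\C) \subseteq \tau_1(\C)$ there. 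The step I expect to be most delicate is the norm-closedness of $\D$: preserving the constraint $\int (d\X/d\mu)\,d\X \le M$, and hence the equivalence $\mu \sim \X$, under the limit is precisely where the lower semicontinuity of $h \mapsto \int(1/h)\,d\X$ and the passage to an a.e.\ convergent subsequence do the real work, and it is why the exact definition of $\Q$ (not merely an $L^2$-ball of densities) is needed.
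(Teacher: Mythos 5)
Your proof is correct, and at its core it relies on the same ingredients as the paper's: identify measures in $\Q$ with their Radon--Nikodym densities, get compactness from weak compactness of the density set, preserve the constraint $\int (d\X/d\mu)\,d\X \le M$ (hence equivalence with $\X$) by Fatou along an a.e.\ convergent subsequence, and compare $\tau_1(\C)$ with $\tau_\psi(\C)$ by truncation plus the Cauchy--Schwarz bound $\int_{\{\psi\ge k\}}\psi\,d\mu \le \sqrt{M}\,(\int_{\{\psi\ge k\}}\psi^2\,d\X)^{1/2}$. The packaging, however, is genuinely different. The paper splits $\Q$ into the intersection of $\Q_1$ (the $L^2$-bound on $d\mu/d\X$) and $\Q_2$ (the bound on $d\X/d\mu$), maps densities into $L^1(\X)$ with its weak topology, obtains compactness of the relevant set via uniform integrability and the Dunford--Pettis theorem (and metrizability from weak compactness in a separable Banach space), proves everything first for $\tau_1(\C)$, and only then transfers to $\tau_\psi(\C)$ via the coincidence of the two topologies. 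You instead embed into the Hilbert space $L^2(\X)$, show the density map is a homeomorphism directly between $(\Q,\tau_\psi(\C))$ and the image with the weak $L^2$ topology (using the uniform bound $\sqrt{M}$ to pass from bounded test functions to all of $L^2(\X)$ by uniform approximation), and get weak compactness from norm-closedness plus Mazur's theorem and reflexivity, with metrizability coming from weak metrizability of bounded sets in a separable Hilbert space. Your route avoids Dunford--Pettis and delivers metrizability and the $\tau_\psi$-homeomorphism in one stroke; the paper's route isolates cleanly the roles of the two defining constraints ($\Q_1$ compact, $\Q_2$ closed) and postpones all $\psi$-dependence to the final topology comparison. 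Two small points you should make explicit, though both are one-line Cauchy--Schwarz observations: in the closedness step, $\int \psi g\,d\X < \infty$ (so that the limit measure indeed lies in $\P_\psi(\C)$), and the convexity of the image set of densities (inherited from convexity of $\Q$ and affineness of the density map), which is what Mazur's theorem is applied to.
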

\begin{proof}
Of course, by $\tau_1(\C)$ we mean $\tau_{\phi}(\C)$ with $\phi \equiv 1$. Define
\begin{align*}
\Q_1 &= \left\{\mu \in \P(\C) : \mu \ll \X, \ \int(d\mu/d\X)^2d\X \le M\right\}, \\
\Q_2 &= \left\{\mu \in \P(\C) : \mu \sim \X, \ \int(d\X/d\mu)d\X \le M\right\}.
\end{align*}
Cleary each set is convex. We will show that $\Q_1$ is compact and metrizable under $\tau_1(\C)$, that $\Q_2$ is $\tau_1(\C)$-closed, and that $\tau_1(\C)$ and $\tau_\psi(\C)$ induce the same topology on $\Q_1$.

Let $q \in \R$ with $|q| \ge 1$. The set $K_q := \{Z \in L^1(\X) : Z \ge 0 \ \X-a.s., \ \int |Z|^qd\X \le M\}$ is clearly convex. It is also norm-closed: if $Z_n \rightarrow Z$ in $L^1(\X)$ with $Z_n \in K_q$, then $Z_n \rightarrow Z$ $\X$-a.s. along a subsequence, and thus Fatou's lemma yields $\int |Z|^qd\X \le \liminf\int |Z_n|^qd\X \le M$. Hence, $K_q$ is weakly closed (see Theorem 5.98 of \cite{aliprantisborder}). For $q > 1$, the set $K_q$ is uniformly integrable and thus weakly compact, by the Dunford-Pettis theorem; moreover, $K_q$ is metrizable, since it is a weakly compact subset of separable Banach space (Theorem V.6.3 of \cite{dunfordschwartz1}). Now, for $\mu \ll \X$, define $F(\mu) := d\mu/d\X$. Then $F$ is a homeomorphism from $(\Q_2,\tau_1(\C))$ to $K_{-1}$ equipped with the weak topology of $L^1(\X)$, and so $\Q_2$ is $\tau_1(\C)$-closed. Similarly, $F$ is a homeomorphism from $(\Q_1,\tau_1(\C))$ to $K_2$ with the weak topology, and so $(\Q_1,\tau_1(\C))$ is compact and metrizable.

It remains to prove that $\tau_1(\C)$ and $\tau_\psi(\C)$ coincide on $\Q_1$. Let $\phi \in B_\psi(\C)$ with $|\phi| \le \psi$, $\mu \in \P_\psi(\C)$, and $\epsilon > 0$, and define $U = \{\nu \in \P_\psi(\C) : |\int\phi\,d(\nu - \mu)| < \epsilon\}$. Since $\tau_\psi(\C)$ is stronger than $\tau_1(\C)$, it suffices to find a $\tau_1(\C)$-neighborhood $V$ of $\mu$ with $V \cap \Q_1 \subset U \cap \Q_1$. First, note that for any $c>0$ and $\nu \in \Q_1$, the Cauchy-Schwarz inequality yields
\[
\left(\int_{\{\psi \ge c\}}\psi\,d\nu\right)^2 \le \int\left(\frac{d\nu}{d\X}\right)^2d\X\int_{\{\psi \ge c\}}\psi^2d\X \le M\int_{\{\psi \ge c\}}\psi^2d\X.
\]
Since $\int\psi^2d\X < \infty$ by (S.2), we may find $c > 0$ such that $\int_{\{\psi \ge c\}}\psi\,d\nu \le \epsilon/3$ for all $\nu \in \Q_1$. Then, for any $\nu \in \Q_1$,
\begin{align*}
\left|\int\phi\,d(\nu - \mu)\right| &\le \left|\int_{\{\psi < c\}}\phi\,d(\nu - \mu)\right| + \left|\int_{\{\psi \ge c\}}\phi\,d\nu\right| + \left|\int_{\{\psi \ge c\}}\phi\,d\mu\right| \\
	&\le \frac{2\epsilon}{3} + \left|\int_{\{\psi < c\}}\phi\,d(\nu - \mu)\right|.
\end{align*}
Set $V = \{\nu \in \P(\C) : |\int_{\{\psi < c\}}\phi\,d(\nu - \mu)| < \epsilon/3\}$, so that $V \cap \Q_1 \subset U \cap \Q_1$. Since $|\phi| \le \psi$, we have $\phi 1_{\{\psi < c\}} \in B_1(\C)$, and thus $V \in \tau_1(\C)$.
\end{proof}

The next two lemmas pertain to the $Z^{\mu,\nu}$ terms that arise in the BSDE representations above; in particular, a kind of continuity of the map $(\mu,\nu) \mapsto Z^{\mu,\nu}$ is needed. 

\begin{lemma} \label{continuity}
Suppose assumption (E) holds. Then for each $(t,x) \in [0,T] \times \C$, the function $\Q \times \P(A) \times \R^d \ni (\mu,q,z) \mapsto H(t,x,\mu,q,z)$ is continuous, and the set-valued function $\Q \times \R^d \ni (\mu,z) \mapsto A(t,x,\mu,z)$ is upper hemicontinuous.
\end{lemma}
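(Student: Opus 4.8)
The plan is to reduce the statement to Berge's maximum theorem (Theorem~\ref{maximizer}), applied to the Hamiltonian $h$, once the joint continuity of $h$ over the relevant domain has been established. The only genuinely structural input is Proposition~\ref{Qcompact}: it tells us that $(\Q,\tau_\psi(\C))$ is metrizable, which is exactly what allows us to convert the \emph{sequential} continuity postulated in assumption~(E) into honest continuity.

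First I would note that $\Q\subseteq\P_X$, since every $\mu\in\Q$ satisfies $\mu\sim\X$ by the definition~\eqref{qdef} of $\Q$. Because $A$ is a compact subset of a normed space and $\P(A)$ with the weak topology is compact metrizable, the products $\Q\times A$ and $\Q\times\P(A)\times A$ are metrizable, and carry the subspace topologies inherited from $\P_X\times A$ and $\P_X\times\P(A)\times A$ respectively. Restricting the maps in~(E) to these metrizable subspaces therefore upgrades sequential continuity to continuity: for the fixed $(t,x)$, the maps $(\mu,a)\mapsto b(t,x,\mu,a)$ and $(\mu,q,a)\mapsto f(t,x,\mu,q,a)$ are continuous on $\Q\times A$ and $\Q\times\P(A)\times A$. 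Since $(t,x)$ is fixed, $\sigma^{-1}(t,x)$ is a fixed matrix and $\sigma^{-1}b$ is bounded by~(S.2), so $(\mu,a)\mapsto\sigma^{-1}b(t,x,\mu,a)$ is continuous on $\Q\times A$ as well; combining this with the continuity of $f$ and of the pairing $(z,v)\mapsto z\cdot v$, the map
\[
\Q\times\P(A)\times\R^d\times A\ni(\mu,q,z,a)\longmapsto h(t,x,\mu,q,z,a)=f(t,x,\mu,q,a)+z\cdot\sigma^{-1}b(t,x,\mu,a)
\]
is jointly continuous and real-valued.

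Finally I would invoke Berge's maximum theorem (Theorem~\ref{maximizer}) with metric space $E:=\Q\times\P(A)\times\R^d$, compact metric space $K:=A$, and $\phi:=h(t,x,\cdot,\cdot,\cdot,\cdot)$. This yields simultaneously that $H(t,x,\cdot,\cdot,\cdot)=\max_{a\in A}h(t,x,\cdot,\cdot,\cdot,a)$ is continuous on $\Q\times\P(A)\times\R^d$ and that $(\mu,q,z)\mapsto A(t,x,\mu,q,z)=\arg\max_{a\in A}h(t,x,\mu,q,z,a)$ is upper hemicontinuous (and compact-valued). Since assumption~(S.5) makes $A(t,x,\mu,q,z)$ independent of $q$, restricting this set-valued map to a single slice $\{q=q_0\}$ gives that $(\mu,z)\mapsto A(t,x,\mu,z)$ is upper hemicontinuous, which is the claim. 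The one step requiring care is the topological bookkeeping in the second paragraph — in particular identifying the subspace topology on $\Q$ with a metric topology, for which Proposition~\ref{Qcompact} is essential; after that, everything is an immediate application of Berge's theorem, and I do not foresee any substantive obstacle.
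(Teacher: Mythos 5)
Your proof is correct and follows essentially the same route as the paper: it uses the metrizability of $(\Q,\tau_\psi(\C))$ from Proposition \ref{Qcompact} to upgrade the sequential continuity in assumption (E) to continuity, and then applies Berge's maximum theorem (Theorem \ref{maximizer}) with $E = \Q \times \P(A) \times \R^d$ and $K = A$, with (S.5) disposing of the $q$-dependence of the argmax. The paper's own proof is just a terser statement of exactly this argument, so your additional bookkeeping (e.g.\ $\Q \subset \P_X$, joint continuity of $h$) only makes explicit what the paper leaves implicit.
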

\begin{proof}
Since $\Q$ is metrizable by Lemma \ref{Qcompact}, this is simply a combination of assumption (E) with Theorem \ref{maximizer}, using $E = \Q \times \P(A) \times \R^d$ and $K = A$. Recall from (S.1) that $A$ is compact.
\end{proof}

\begin{lemma} \label{zconvergence}
Suppose assumption (E) holds. Suppose $(\mu^n,\nu^n) \rightarrow (\mu,\nu)$ in $\Q \times \M$, using $\tau_\psi(\C)$ on $\Q$. Then
\[
\lim_{n \rightarrow \infty}\E\left[\int_0^T\left|Z^{\mu^n,\nu^n}_t - Z^{\mu,\nu}_t\right|^2dt\right] = 0.
\]
\end{lemma}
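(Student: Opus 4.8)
The plan is to exploit the splitting provided by (S.5). Writing $f=f_1+f_2$ as there, the Hamiltonian becomes
\[
H(t,x,\mu,q,z) = f_2(t,x,\mu,q) + \tilde{H}(t,x,\mu,z), \qquad \tilde{H}(t,x,\mu,z) := \sup_{a\in A}\left\{f_1(t,x,\mu,a) + z\cdot\sigma^{-1}b(t,x,\mu,a)\right\},
\]
so that all of the $z$-dependence lives in $\tilde{H}$, which is $c$-Lipschitz in $z$ (with $c$ a bound for $\sigma^{-1}b$) and, by Lemma~\ref{continuity}, continuous in $\mu$ on $\Q$; after a harmless renormalization of the splitting (moving a $\mu$-dependent constant from $f_1$ to $f_2$) one may take $f_2(t,x,\mu,q)=f(t,x,\mu,q,a_0)$ for a fixed $a_0\in A$, so that $f_2$ obeys the growth bound in (S.4) and is continuous in $(\mu,q)$ by (E). The difficulty specific to this lemma is that $\nu^n\to\nu$ holds only in the \emph{stable} sense: fixing $\omega$ and applying the definition of the stable topology to the bounded, $s$-measurable, $q$-continuous integrand $f_2(s,X(\omega),\mu,\cdot\,)$ shows $\int_0^T f_2(s,X(\omega),\mu,\nu^n_s)\,ds\to\int_0^T f_2(s,X(\omega),\mu,\nu_s)\,ds$, but $\int_0^T|f_2(s,X,\mu,\nu^n_s)-f_2(s,X,\mu,\nu_s)|^2\,ds$ need \emph{not} tend to zero, so feeding \eqref{BSDEvalue} directly into the usual BSDE stability estimate does not close the argument.

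To circumvent this I would subtract the $f_2$-contribution before comparing. For $(\mu,\nu)$ set $P^{\mu,\nu}_t:=\E\big[\int_t^T f_2(s,X,\mu,\nu_s)\,ds\,\big|\,\F_t\big]$ and let $\pi^{\mu,\nu}\in\mathbb{H}^{2,d}$ be the integrand in its martingale representation, so that $P^{\mu,\nu}_t=\int_t^T f_2(s,X,\mu,\nu_s)\,ds-\int_t^T\pi^{\mu,\nu}_s\,dW_s$ with $P^{\mu,\nu}_T=0$; this uses only $\int_0^T f_2(s,X,\mu,\nu_s)\,ds\in L^2(P)$, which follows from (S.4) and (S.2). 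Subtracting this from \eqref{BSDEvalue}, the pair $(Y^{\mu,\nu}-P^{\mu,\nu},\,Z^{\mu,\nu}-\pi^{\mu,\nu})$ solves the BSDE with terminal value $g(X,\mu)$ and driver $(s,z)\mapsto\tilde{H}(s,X,\mu,z+\pi^{\mu,\nu}_s)$, which is still $c$-Lipschitz in $z$ and has the square-integrability needed for well-posedness. Applying the classical $L^2$-stability estimate for BSDEs with uniformly Lipschitz driver (valid in the present setting, as recalled at the start of this section) to this transformed pair, for $(\mu^n,\nu^n)$ against $(\mu,\nu)$, and then writing $Z^{\mu,\nu}=(Z^{\mu,\nu}-\pi^{\mu,\nu})+\pi^{\mu,\nu}$, one controls $\E\int_0^T|Z^{\mu^n,\nu^n}_s-Z^{\mu,\nu}_s|^2\,ds$ by a constant multiple of
\[
\E\big|g(X,\mu^n)-g(X,\mu)\big|^2 \;+\; \E\int_0^T\big|\tilde{H}(s,X,\mu^n,Z^{\mu,\nu}_s-\pi^{\mu,\nu}_s+\pi^{\mu^n,\nu^n}_s)-\tilde{H}(s,X,\mu,Z^{\mu,\nu}_s)\big|^2\,ds \;+\; \E\int_0^T\big|\pi^{\mu^n,\nu^n}_s-\pi^{\mu,\nu}_s\big|^2\,ds.
\]

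Each of these three terms then goes to zero by dominated convergence, once one records the uniform bound $c_0:=\sup_{\mu\in\Q}\int\psi\,d\mu<\infty$ (Cauchy--Schwarz against $d\mu/d\X$, using $\int\psi^2\,d\X<\infty$ from (S.2) and the defining inequalities \eqref{mdef}--\eqref{qdef} of $\Q$), which makes $g(\cdot,\mu)$, $f_2(\cdot,\mu,\cdot)$ and $\tilde{H}(\cdot,\mu,\cdot)$ uniformly $L^2$-dominated over $\mu\in\Q$. The first term vanishes by (E) and dominated convergence, since $\mu^n,\mu\in\Q\subset\P_X$. In the second term, the Lipschitz bound splits off $c\,|\pi^{\mu^n,\nu^n}_s-\pi^{\mu,\nu}_s|$ (which merges into the third term), leaving $|\tilde{H}(s,X,\mu^n,Z^{\mu,\nu}_s)-\tilde{H}(s,X,\mu,Z^{\mu,\nu}_s)|$, handled by the $\mu$-continuity of $\tilde{H}$ from Lemma~\ref{continuity} and dominated convergence. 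For the third term, It\^o's isometry together with the $L^2$-contractivity of conditional expectation bound $\E\int_0^T|\pi^{\mu^n,\nu^n}_s-\pi^{\mu,\nu}_s|^2\,ds$ by a constant multiple of $\E\big|\int_0^T f_2(s,X,\mu^n,\nu^n_s)\,ds-\int_0^T f_2(s,X,\mu,\nu_s)\,ds\big|^2$; splitting the integrand as $[f_2(s,X,\mu^n,\nu^n_s)-f_2(s,X,\mu,\nu^n_s)]+[f_2(s,X,\mu,\nu^n_s)-f_2(s,X,\mu,\nu_s)]$, the first bracket is dominated by $\sup_{q\in\P(A)}|f_2(s,X,\mu^n,q)-f_2(s,X,\mu,q)|$, which tends to $0$ by Lemma~\ref{lemma-joint-continuity} applied on the metrizable space $\Q\times\P(A)$, while the time integral of the second bracket tends to $0$ for each fixed $\omega$ by the stable convergence of $\nu^n$; since both brackets are uniformly dominated by a multiple of $\psi(X)\in L^2$, dominated convergence finishes it. The main obstacle throughout is exactly the weakness of stable convergence, which is why the $f_2$-term must be routed through $\pi^{\mu,\nu}$: via martingale representation one only needs $L^2(P)$-convergence of the scalar random variable $\int_0^T f_2(s,X,\cdot,\cdot)\,ds$ --- which does hold --- rather than the pathwise $L^2$-smallness of the driver difference, which fails.
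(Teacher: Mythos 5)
Your argument is correct, but it takes a genuinely different route from the paper's. The paper handles exactly the obstacle you identify (stable convergence of $\nu^n$ is too weak for the naive $L^2$ stability estimate) by invoking the BSDE stability theorem of Hu and Peng \cite{hupeng-stability}, whose hypothesis is precisely what stable convergence can deliver: after checking uniform integrability, one only needs $\E\bigl[|g(X,\mu^n)-g(X,\mu)|^2\bigr]\rightarrow 0$ and $\E\bigl[\bigl(\int_t^T\bigl(H(s,X,\mu,\nu_s,Z^{\mu,\nu}_s)-H(s,X,\mu^n,\nu^n_s,Z^{\mu,\nu}_s)\bigr)ds\bigr)^2\bigr]\rightarrow 0$ for each $t$, i.e.\ convergence of the \emph{time-integrated} driver increments along the limiting $Z$, which is then proved with the same ingredients you use (Lemma \ref{continuity}, compactness of $\P(A)$ via Lemma \ref{lemma-joint-continuity}, the definition of the stable topology, and dominated convergence). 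You instead avoid this external stability theorem by exploiting (S.5) structurally: shifting the solution by the martingale part $\pi^{\mu,\nu}$ of $\E[\int_t^T f_2\,ds\,|\,\F_t]$ removes all $\nu$-dependence from the driver, the classical Lipschitz stability estimate then applies to the shifted pairs, and stable convergence enters only through $L^2(P)$-convergence of the scalar variable $\int_0^T f_2\,ds$, converted into $\E\int_0^T|\pi^{\mu^n,\nu^n}_s-\pi^{\mu,\nu}_s|^2ds\rightarrow 0$ by It\^o's isometry; your renormalization $f_2(t,x,\mu,q):=f(t,x,\mu,q,a_0)$ is the right care to take so that $f_2$ inherits (S.4) and (E), and your uniform bound $\sup_{\mu\in\Q}\int\psi\,d\mu<\infty$ (the paper uses $\sup_n\int\psi\,d\mu^n<\infty$ from $\tau_\psi$-convergence instead) supplies the needed domination. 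What each approach buys: yours is self-contained and shows that, under the standing assumption (S.5), the appeal to \cite{hupeng-stability} can be dispensed with entirely (sharpening the comment in Remark \ref{re:bounded} that Hu--Peng is needed ``only'' for the mean field interaction in the control); the paper's route, by contrast, does not use the additive separation of the $q$-dependence from the $z$-dependence at this stage---only continuity in $q$, uniform over the compact $\P(A)$---and so is marginally more robust and shorter, at the cost of citing a less elementary stability result.
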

\begin{proof}
Note that the functions $H(s,x,\mu^\prime,\nu^\prime,\cdot)$ have the same Lipschitz constant for each $(t,x,\mu^\prime,\nu^\prime)$, coinciding with the uniform bound for $\sigma^{-1}b$. Assumption (S.4) implies
\begin{align*}
\E\left[\int_0^T|H(t,X,\mu^n,\nu^n_t,0)|^2dt\right] &= \E\left[\int_0^T\sup_{a \in A}|f(t,X,\mu^\prime,\nu^\prime_t,a)|^2dt\right] \\
	&\le 2c^2T\E[\psi^2(X)] + 2c^2T\rho^2\left(\int\psi\,d\mu^n\right)
\end{align*}
for all $1 \le n \le \infty$, where $(\mu^\infty,\nu^\infty) := (\mu,\nu)$. Since $\mu^n \in \P_\psi(\C)$ and $\mu^n \rightarrow \mu$ in $\tau_\psi(\C)$ it follows that $\sup_n\int\psi\,d\mu^n < \infty$. Since $\rho$ is increasing and nonnegative,
\begin{align}
\sup_n\rho^2\left(\int \psi \, d\mu^n\right)  = \rho^2\left(\sup_n\int\psi\,d\mu^n\right) < \infty. \label{zconv1}
\end{align}
Assumption (S.2) yields $\E[\psi^2(X)] < \infty$. Hence, we will be able to conclude via a convergence result for BSDEs proven by Hu and Peng in \cite{hupeng-stability}, as soon as we show that
\[
I_n := \E\left[\left|g(X,\mu^n) - g(X,\mu)\right|^2\right] \rightarrow 0,
\]
and 
\[
II_n := \E\left[\left(\int_t^T(H(s,X,\mu,\nu_s,Z^{\mu,\nu}_s) - H(s,X,\mu^n,\nu^n_s,Z^{\mu,\nu}_s))ds\right)^2 \right] \rightarrow 0,
\]
for all $t \in [0,T]$.

We first check that the integrands of $I_n$ and $II_n$ are uniformly integrable. Assumption (S.4) gives
\[
\left|g(X,\mu^n) - g(X,\mu)\right| \le c\left(2\psi(X) + \rho\left(\int \psi \, d\mu\right) + \rho\left(\int \psi \, d\mu_n\right)\right),
\]
which is indeed square integrable in light of (S.2) and (\ref{zconv1}). Note that
\begin{align}
|H(t,X,\mu,&\nu_t,Z^{\mu,\nu}_t) - H(t,X,\mu^n,\nu^n_t,Z^{\mu,\nu}_t)| \nonumber \\
	&\le \sup_{a \in A}\left|f(t,X,\mu,\nu_t,a) + Z^{\mu,\nu}_t \cdot \sigma^{-1}b(t,X,\mu,a) \right. \nonumber \\ 	
	& \left. \quad - f(t,X,\mu^n,\nu^n_t,a) - Z^{\mu,\nu}_t \cdot \sigma^{-1}b(t,X,\mu^n,a) \right| \nonumber \\
	&\le \left|\Delta^{f,n}_t\right| + |Z^{\mu,\nu}_t||\Delta^{b,n}_t|. \label{exist3}
\end{align}
where
\begin{align*}
\Delta^{f,n}_t &:= \sup_{a \in A}\left|f(t,X,\mu,\nu_t,a) - f(t,X,\mu^n,\nu^n_t,a)\right|, \text{  and} \\ 
\Delta^{b,n}_t &:= \sup_{a \in A}\left|\sigma^{-1}b(t,X,\mu,a) - \sigma^{-1}b(t,X,\mu^n,a)\right|.
\end{align*}
Again, (S.4) lets us bound $|\Delta^{f,n}|$ by the same term with which we bounded $|g(X,\mu^n) - g(X,\mu)|$. Since $Z^{\mu,\nu} \in \mathbb{H}^{2,1}$ and $|\Delta^{b,n}|$ is bounded, the integrands are indeed uniformly integrable.

It is clear now that $I_n \rightarrow 0$, because of assumption (E) and the dominated convergence theorem. Rewrite $II_n$ as
\begin{align*}
II_n = &\E\left[\left|\int_t^Tds\left(\int_{\P(A)}\nu_s(dq)H(s,X,\mu,q,Z^{\mu,\nu}_s) - \right.\right.\right. \\
&\quad\quad\quad\quad\quad\left.\left.\left. \int_{\P(A)}\nu^n_s(dq)H(s,X,\mu^n,q,Z^{\mu,\nu}_s)\right)\right|^2 \right].
\end{align*}
For fixed $s$ and $\omega$, the function $\Q \times \P(A) \ni (\mu^\prime, q) \mapsto H(s,X,\mu^\prime,q,Z^{\mu,\nu}_s)$ is continuous, by Lemma \ref{continuity}. Compactness of $\P(A)$ implies that the function $\Q \ni \mu^\prime \mapsto H(s,X,\mu^\prime,q,Z^{\mu,\nu}_s)$ is continuous, uniformly in $q$ (see Lemma \ref{lemma-joint-continuity}). Thus
\[
\int_{\P(A)}\nu^n_s(dq)H(s,X,\mu^n,q,Z^{\mu,\nu}_s) - \int_{\P(A)}\nu^n_s(dq)H(s,X,\mu,q,Z^{\mu,\nu}_s) \rightarrow 0.
\]
By definition of the stable topology of $\M$, we also have
\[
\int_t^Tds\int_{\P(A)}(\nu^n_s-\nu_s)(dq)H(s,X,\mu,q,Z^{\mu,\nu}_s) \rightarrow 0.
\]
It is now clear that $II_n \rightarrow 0$, and the proof is complete.
\end{proof}

The last ingredient of the proof is to establish the applicability of Proposition \ref{fixedpointtheorem}. Note that $A$ is a compact subset of a normed space, say $(A',\|\cdot\|_A)$, and thus $\A$ may also be viewed as a subset of the normed space of (equivalence classes of $dt \times dP$-a.e. equal) progressively measurable $A'$-valued processes, with the norm
\[
\|\alpha\|_\A := \E\int_0^T\|\alpha_t\|_Adt.
\]
\begin{lemma} \label{auhc}
Under assumptions (E) and (C), the function $\A : \Q \times \M \rightarrow 2^\A$ defined by (\ref{alphamudef}) is upper hemicontinuous and has closed and convex values.
\end{lemma}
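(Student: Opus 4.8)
The plan is to dispose of the convexity and closedness of the values directly, and to derive upper hemicontinuity from the $L^2$-stability of the adjoint processes (Lemma \ref{zconvergence}) together with the pointwise upper hemicontinuity of the maximizer correspondence (Lemma \ref{continuity}). Fix $(\mu,\nu)\in\Q\times\M$. Convexity of $\A(\mu,\nu)$ is immediate from assumption (C): a convex combination of two processes each taking values $dt\times dP$-a.e.\ in the convex sets $A(t,X,\mu,Z^{\mu,\nu}_t)$ again does so, and stays progressively measurable and $A$-valued. For closedness in the norm $\|\cdot\|_\A$, I would take $\alpha^k\to\alpha$ with $\alpha^k\in\A(\mu,\nu)$, pass to a subsequence along which $\alpha^k_t(\omega)\to\alpha_t(\omega)$ for a.e.\ $(t,\omega)$, and invoke that each $A(t,X(\omega),\mu,Z^{\mu,\nu}_t(\omega))$ is compact --- being the set where the $a$-continuous map $h(t,X(\omega),\mu,Z^{\mu,\nu}_t(\omega),\cdot)$ attains its maximum over the compact set $A$ --- so that the limit $\alpha_t(\omega)$ remains in it a.e., whence $\alpha\in\A(\mu,\nu)$.

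For upper hemicontinuity, recall that it amounts to $d(\A(\mu,\nu),\A(\mu^n,\nu^n))\to0$ for every sequence $(\mu^n,\nu^n)\to(\mu,\nu)$ in $\Q\times\M$, and that $d(\A(\mu,\nu),\A(\mu^n,\nu^n))=\sup_{\beta\in\A(\mu^n,\nu^n)}\mathrm{dist}_\A(\beta,\A(\mu,\nu))$, where $\mathrm{dist}_\A(\beta,F):=\inf_{\alpha\in F}\|\alpha-\beta\|_\A$. A routine subsequence argument reduces the claim to showing that, for an arbitrary selection $\beta^n\in\A(\mu^n,\nu^n)$ (nonempty by Remark \ref{nonempty}), $\mathrm{dist}_\A(\beta^n,\A(\mu,\nu))\to0$. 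Given $\beta^n$, set $\rho_n(t,\omega):=\inf\{\|a-\beta^n_t(\omega)\|_A:a\in A(t,X(\omega),\mu,Z^{\mu,\nu}_t(\omega))\}$; the infimum is attained by compactness, and applying the measurable maximum theorem (Theorem 18.19 of \cite{aliprantisborder}) to $a\mapsto-\|a-\beta^n_t(\omega)\|_A$ over the progressively measurable compact-valued correspondence $(t,\omega)\mapsto A(t,X(\omega),\mu,Z^{\mu,\nu}_t(\omega))$ produces a progressively measurable $\alpha^n\in\A(\mu,\nu)$ with $\|\alpha^n_t-\beta^n_t\|_A=\rho_n(t,\omega)$ a.e. Then $\mathrm{dist}_\A(\beta^n,\A(\mu,\nu))\le\E\int_0^T\rho_n\,dt$, and it remains to prove the latter tends to $0$. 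By Lemma \ref{zconvergence}, $Z^{\mu^n,\nu^n}\to Z^{\mu,\nu}$ in $\mathbb{H}^{2,d}$, so along any subsequence there is a further subsequence with $Z^{\mu^n,\nu^n}_t(\omega)\to Z^{\mu,\nu}_t(\omega)$ for a.e.\ $(t,\omega)$; at such a point, $\mu^n\to\mu$ in $\Q$ and Lemma \ref{continuity} (upper hemicontinuity of $(\mu,z)\mapsto A(t,X(\omega),\mu,z)$, which does not depend on $q$ by (S.5)) force $\rho_n(t,\omega)\to0$, since $\beta^n_t(\omega)\in A(t,X(\omega),\mu^n,Z^{\mu^n,\nu^n}_t(\omega))$. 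As $\rho_n\le\mathrm{diam}(A)<\infty$, dominated convergence and the subsequence principle yield $\E\int_0^T\rho_n\,dt\to0$.

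I expect the only real obstacle to be this last step: transferring the pointwise (in $(t,\omega)$) upper hemicontinuity of the argmax set into the integrated statement. This hinges on three points that must be handled with care --- (i) upgrading the $\mathbb{H}^{2,d}$-convergence of Lemma \ref{zconvergence} to a.e.\ convergence along a subsequence; (ii) the measurable-selection step realizing the pointwise infimum $\rho_n$ by a genuinely admissible process $\alpha^n\in\A(\mu,\nu)$; and (iii) the fact that $A$ is merely a compact convex subset of a normed space, not a Hilbert space, so one cannot use metric projections and must instead argue via distance-minimizing measurable selections. By contrast, convexity and closedness of the values are essentially immediate.
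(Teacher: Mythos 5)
Your proposal is correct and follows essentially the same route as the paper: convexity from (C), closedness via a.e.\ subsequential convergence plus compactness of the argmax sets, and upper hemicontinuity by combining Lemma \ref{zconvergence}, the pointwise upper hemicontinuity from Lemma \ref{continuity}, a distance-minimizing measurable selection via Theorem 18.19 of \cite{aliprantisborder}, and bounded convergence with the subsequence principle. The only cosmetic difference is that the paper exchanges both the supremum and the infimum with the expectation through two measurable selections, obtaining an exact identity with the Hausdorff-type distance $c^n_t$ between the argmax sets, whereas you handle the supremum by an extraction/contradiction argument and bound the relevant distance by the pointwise quantity $\rho_n \le c^n_t$ --- a harmless variation.
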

\begin{proof}
Convexity follows immediately from assumption (C). We first show $\A(\cdot)$ has closed values. Let $\L$ denote Lebesgue measure on $[0,T]$. Note that $\|\cdot\|_A$ is bounded on $A$, and thus $\|\cdot\|_\A$ metrizes convergence in $\L \times P$-measure. To prove closedness, fix a sequence $\alpha^n \in \A(\mu,\nu)$ such that $\|\alpha^n - \alpha\|_\A \rightarrow 0$ for some $\alpha \in \A$. By passing to a subsequence, we may assume $\alpha^n_t(\omega) \rightarrow \alpha_t(\omega)$ for all $(t,\omega) \in N$, for some $N \subset [0,T] \times \Omega$ with $\L \times P(N) = 1$. We may assume also that $\alpha^n_t(\omega) \in A(t,X(\omega),\mu,Z^{\mu,\nu}_t(\omega))$ for all $n$ and $(t,\omega) \in N$. By Theorem \ref{maximizer}, for each $(t,\omega)$ the set $A(t,X(\omega),\mu,Z^{\mu,\nu}_t(\omega)) \subset A$ is compact, and thus $\alpha_t(\omega) \in A(t,X(\omega),\mu,Z^{\mu,\nu}_t(\omega))$ for all $(t,\omega) \in N$.

To prove upper hemicontinuity, let $(\mu^n,\nu^n) \rightarrow (\mu,\nu)$ in $\Q \times \M$. We must show that
\begin{align*}
d(\A(\mu,\nu), \A(\mu^n,\nu^n)) = \sup_{\alpha^n \in \A(\mu^n,\nu^n)}\inf_{\alpha \in \A(\mu,\nu)}\E\int_0^T\|\alpha^n_t - \alpha_t\|_Adt \rightarrow 0.
\end{align*}
Define 
\begin{align*}
A_t(\omega) &:= A(t,X(\omega),\mu,Z^{\mu,\nu}_t(\omega)), \\
A^n_t(\omega) &:= A(t,X(\omega),\mu^n,Z^{\mu^n,\nu^n}_t(\omega)),
\end{align*}
and
\begin{align*}
c^n_t(\omega) &:= d\left(A_t(\omega), A^n_t(\omega)\right) \\
	&= \sup\left\{\inf\left\{\|a - b\|_A : b \in A_t(\omega)\right\} : a \in A^n_t(\omega) \right\}.
\end{align*}
Lemma \ref{zconvergence} implies that $Z^{\mu^n,\nu^n} \rightarrow Z^{\mu,\nu}$ in $\L \times P$-measure; it follows then from upper hemicontinuity of $A(t,x,\cdot,\cdot)$ (Lemma \ref{continuity}) that $c^n \rightarrow 0$ in $\L \times P$-measure as well. Since of course $c^n$ is bounded, the proof will be complete once we establish
\[
\sup_{\alpha^n \in \A(\mu^n,\nu^n)}\inf_{\alpha \in \A(\mu,\nu)}\E\int_0^T\|\alpha^n_t - \alpha_t\|_Adt = \E\int_0^Tc^n_tdt.
\]

To prove that we can pass the infimum and supremum inside of the integrals, we first use Theorem 18.19 of \cite{aliprantisborder} to draw a number of conclusions. First, the map $(t,\omega) \mapsto A_t(\omega)$ is measurable, in the sense of Definition 18.1 of \cite{aliprantisborder}, and thus also weakly measurable since it is compact-valued (see Lemma 18.2 of \cite{aliprantisborder}). Second, there exists a measurable function $\hat{\beta} : [0,T] \times \Omega \times A \rightarrow A$ such that
\begin{align*}
\|a - \hat{\beta}(t,\omega,a)\|_A &= \inf\left\{\|a - b\|_A : b \in A_t(\omega)\right\}, \\
\hat{\beta}(t,\omega,a) &\in A_t(\omega).
\end{align*}
Note that for any $\alpha^n \in \A$, the process $\hat{\beta}(t,\omega,\alpha^n_t(\omega))$ is in $\A(\mu,\nu)$. Hence, we may exchange the infimum and the expectation to get
\[
\inf_{\alpha \in \A(\mu,\nu)}\E\int_0^T\|\alpha^n_t - \alpha_t\|_Adt = \E\int_0^T\inf\left\{\|\alpha^n_t - b\|_A : b \in A_t(\omega)\right\}dt,
\]
It follows from Theorem \ref{maximizer} that $a \mapsto \inf\left\{\|a - b\|_A : b \in A_t(\omega)\right\}$ is continuous for each $(t,\omega)$. Hence, Theorem 18.19 of \cite{aliprantisborder} also tells us that there exists a measurable selection $\hat{\beta}^n : [0,T] \times \Omega \rightarrow A$ such that
\begin{align*}
c^n_t(\omega) &= \inf\left\{\|\hat{\beta}^n(t,\omega) - b\|_A : b \in A_t(\omega)\right\}, \\
\hat{\beta}^n(t,\omega) &\in A^n_t(\omega).
\end{align*}
The process $\hat{\beta}^n(t,\omega)$ is in $\A(\mu^n,\nu^n)$, and so we exchange the supremum and the expectation to get
\[
\sup_{\alpha^n \in \A(\mu^n,\nu^n)}\E\int_0^T\inf\left\{\|\alpha^n_t - b\|_A : b \in A_t(\omega)\right\}dt = \E\int_0^Tc^n_tdt.
\]
\end{proof}

\begin{proof}[Proof of Theorem \ref{existence}]
The proof of Theorem \ref{existence} is an application of Proposition \ref{fixedpointtheorem}, with $K = \Q \times \M$ and $E = \A$. Let $\S$ denote the vector space of bounded measurable functions $\phi : [0,T] \times \P(A) \rightarrow \R$ such that $\phi(t,\cdot)$ is continuous for each $t$. Endow $\S$ with the supremum norm, and let $\S^*$ denote its continuous dual space. Note that $\M \subset \S^*$. Let $\Y := B_\psi(\C) \oplus \S$, endowed with the norm
\[
\|(\phi,\eta)\|_\Y := \sup_{x \in \C}\frac{|\phi(x)|}{\psi(x)} + \sup_{(t,q) \in [0,T] \times \P(A)}|\eta(t,q)|.
\]
The dual of $\Y$ is $\Y^* = B^*_\psi(\C) \oplus \S^*$, which contains $\Q \times \M$ as a subset. Using $\tau_\psi(\C)$ on $\Q$, the product topology of $\Q \times \M$ coincides with the topology induced by the weak*-topology of $\Y^*$. By Lemma \ref{RNDbound}, the function $\Phi$ takes values in $\Q \times \M$, noting that $\Q \times \M$ is convex and compact by Lemma \ref{Qcompact}. Let $\tau_\M$ denote the topology of $\M$. To prove that $\Phi : (\Q,\tau_\psi(\C)) \times (\A,\|\cdot\|_\A) \rightarrow (\Q,\tau_\psi(\C)) \times (\M,\tau_\M)$ is continuous, Lemma \ref{Qcompact} tells us that it suffices to show that $\Phi : (\Q,\tau_\psi(\C)) \times (\A,\|\cdot\|_\A) \rightarrow (\Q,\tau_1(\C)) \times (\M,\tau_\M)$ is sequentially continuous. We will instead prove the stronger statement that $\Phi : (\Q,\tau_\psi(\C)) \times (\A,\|\cdot\|_\A) \rightarrow (\Q,\V_1) \times (\M,\tau_\M)$ is sequentially continuous, where $\V_1$ denotes the total variation metric,
\[
\V_1(\mu,\nu) := \sup\int\phi\,d(\mu-\nu),
\]
where the supremum is over measurable real-valued functions $\phi$ with $|\phi| \le 1$. Denote by $\H(\nu | \mu)$ the relative entropy,
\[
\H(\nu|\mu) = \begin{cases}
\int \log \frac{d\nu}{d\mu} d\nu & \text{if } \nu \ll \mu \\
+ \infty & \text{otherwise}.
\end{cases}
\]

Now let $(\mu^n,\alpha^n) \rightarrow (\mu,\alpha)$ in $(\Q,\tau_\psi(\C)) \times (\A,\|\cdot\|_\A)$. We first show that $P^{\mu^n,\alpha^n} \rightarrow P^{\mu,\alpha}$. By Pinsker's inequality, it suffices to show
\[
\H(P^{\mu,\alpha}|P^{\mu^n,\alpha^n})) \rightarrow 0.
\]
Since
\[
\frac{dP^{\mu^n,\alpha^n}}{dP^{\mu,\alpha}} = \mathcal{E}\left(\int_0^\cdot\left(\sigma^{-1}b\left(t,X,\mu^n,\alpha^n_t\right) - \sigma^{-1}b\left(t,X,\mu,\alpha_t\right) \right) dW^{\mu,\alpha}_t \right)_T,
\]
and since $\sigma^{-1}b$ is bounded, we compute
\begin{align*}
\H(P^{\mu,\alpha}|P^{\mu^n,\alpha^n}) &= -\E^{\mu,\alpha}\left[\log\frac{dP^{\mu^n,\alpha^n}}{dP^{\mu,\alpha}} \right] \\
	&= \frac{1}{2}\E^{\mu,\alpha}\left[\int_0^T\left|\sigma^{-1}b\left(t,X,\mu^n,\alpha^n_t\right) - \sigma^{-1}b\left(t,X,\mu,\alpha_t\right) \right|^2dt\right]. 
\end{align*}
Since $P^{\mu,\alpha} \sim P$ and $\alpha^n \rightarrow \alpha$ in $\L \times P$-measure, it follows from Lemma \ref{zconvergence} that $Z^{\mu^n,\nu^n} \rightarrow Z^{\mu,\nu}$ in $\L \times P^{\mu,\alpha}$-measure, where $\L$ denotes Lebesgue measure on $[0,T]$. By assumption (E), the map $\sigma^{-1}b(t,x,\cdot,\cdot)$ is continuous for each $(t,x)$. Conclude from the bounded convergence theorem that $P^{\mu^n,\alpha^n} \rightarrow P^{\mu,\alpha}$ in total variation. It follows immediately that $P^{\mu^n,\alpha^n} \circ X^{-1} \rightarrow P^{\mu,\alpha} \circ X^{-1}$ in total variation, and that
\[
\V_1\left(P^{\mu^n,\alpha^n} \circ (\alpha^n_t)^{-1}, P^{\mu,\alpha} \circ (\alpha^n_t)^{-1}\right) \le \V_1\left(P^{\mu^n,\alpha^n}, P^{\mu,\alpha}\right) \rightarrow 0.
\]
Moreover, $P^{\mu,\alpha} \circ (\alpha^n_t)^{-1} \rightarrow P^{\mu,\alpha} \circ \alpha_t^{-1}$ in $\L$-measure, since $\alpha^n \rightarrow \alpha$ in $\L \times P$-measure. Thus $P^{\mu^n,\alpha^n} \circ (\alpha^n_t)^{-1} \rightarrow P^{\mu,\alpha} \circ \alpha_t^{-1}$ in $\L$-measure, which finally implies
\[
\delta_{P^{\mu^n,\alpha^n} \circ (\alpha^n_t)^{-1}}(dq)dt \rightarrow \delta_{P^{\mu,\alpha} \circ \alpha_t^{-1}}(dq)dt, \text{  in } \M.
\]

With continuity of $\Phi$ established, $\Phi$ and $\A(\cdot)$ verify the assumptions of Proposition \ref{fixedpointtheorem}, and thus there exists a fixed point $(\mu,\nu) \in \Phi(\mu,\A(\mu,\nu)) = \{\Phi(\mu,\alpha) : \alpha \in \A(\mu,\nu)\}$. It remains to notice that the function $\Phi$ takes values in $\Q \times \M^0$, where
\begin{align*}
\M^0 := &\left\{\nu \in \M : \nu(dt,dq) = \delta_{\hat{q}(t)}(dq)dt \right. \\
	&\quad\quad\quad\left. \text{ for some measurable } \hat{q} : [0,T] \rightarrow \P(A)\right\}.
\end{align*}
For an element in $\M^0$, the correponding map $\hat{q}$ is uniquely determined, up to almost everywhere equality. Hence, for our fixed point $(\mu,\nu)$, we know that there exist $\alpha \in \A(\mu,\nu)$ and a measurable function $\hat{q} : [0,T] \rightarrow \P(A)$ such that $\nu_t = \delta_{\hat{q}(t)}$ and $\hat{q}(t) = P^{\mu,\alpha} \circ \alpha_t^{-1}$ for almost every $t$.
\end{proof}

\begin{remark} \label{strongformulation}
Assume for the moment that there is no mean field interaction in the control. Following the notation of Remark \ref{zdifficulty}, we may ask if the SDE
\[
dX_t = b(t,X,\mu,\hat{\alpha}(t,X,\mu,\zeta_\mu(t,X)))dt + \sigma(t,X)dW_t,
\]
admits a strong solution, with $\mu$ equal to the law of $X$. This would allow us to solve the mean field game in a strong sense, on a given probability space, as is required in \cite{carmonadelarue-mfg} and \cite{bensoussan-lqmfg}. Since $\zeta_\mu(t,X) = Z^\mu_t$, this forward SDE is coupled with the backward SDE:
\[
\begin{cases}
dX_t \!\!\!\!\!\!&= b(t,X,\mu,\hat{\alpha}(t,X,\mu,Z_t))dt + \sigma(t,X)dW_t, \\
dY_t &= -H(t,X,\mu,Z_t)dt + Z_tdW_t, \\
\mu_0 &= \lambda_0, \quad X \sim \mu, \quad Y_T = g(X,\mu).
\end{cases}
\]
To solve the mean field game in a strong sense, one must therefore resolve this ``mean field FBSDE'', studied in some generality in \cite{carmonadelarue-mffbsde}. The solution must consist of $(X,Y,Z,\mu)$, such that $(X,Y,Z)$ are processes adapted to the filtration generated by $(W_t,X_0)_{t \in [0,T]}$ and satisfying the above SDEs, and such that the law of $X$ is $\mu$. Our formulation is a relaxation of the more common formulation (e.g. \cite{carmonadelarue-mfg} and \cite{bensoussan-lqmfg}) in that the forward SDEs no longer need to be solved in a strong sense. Note, however, that the FBSDE written here is of a different nature from those of \cite{carmonadelarue-mfg, bensoussan-lqmfg}, which were obtained from the maximum principle. Our FBSDE is more like a stochastic form of the PDE systems of Lasry and Lions; indeed, in the Markovian case, the Feynman-Kac formula for the backward part is nothing but the HJB equation.
\end{remark}

\subsection{Proof of Theorem \ref{uniqueness} (uniqueness)} 
\label{section-uniquenessproof}
\begin{proof}[Proof of Theorem \ref{uniqueness}]
Recall that $\A(\mu,\nu)$ is always nonempty, as in Remark \ref{nonempty}. By condition (U.1), we know $\A(\mu,\nu)$ is a singleton for each $(\mu,\nu) \in \P_\psi(\C) \times \M$. Its unique element $\alpha^{\mu,\nu}$ is defined given by
\[
\alpha^{\mu,\nu}_t = \hat{\alpha}(t,X,Z^{\mu,\nu}_t),
\]
where the function $\hat{\alpha}$ is defined as in (\ref{selection}); note that assumptions (U.2) and (U.3) imply that $\hat{\alpha} = \hat{\alpha}(t,x,z)$ does not depend on $\mu$ or $\nu$. Suppose now that $(\mu^1,\nu^1),(\mu^2,\nu^2) \in \P_\psi(\C) \times \M$ are two solutions of the MFG; that is, they are fixed points of the (single-valued) function $\Phi(\cdot,\A(\cdot))$. Abbreviate $Y^i = Y^{\mu^i,\nu^i}$, $Z^i = Z^{\mu^i,\nu^i}$, $\alpha^i = \alpha^{\mu^i,\nu^i}$, $f^i_t := f(t,X,\mu^i,\nu^i_t,\alpha^i_t)$ and $b^i_t := \sigma^{-1}b(t,X,\alpha^i_t)$. We begin by rewriting the BSDEs (\ref{BSDEvalue}) in two ways:
\begin{align*}
d(Y^1_t-Y^2_t) &= -\left[f^1_t - f^2_t + Z^1_t \cdot b^1_t - Z^2_t \cdot b^2_t\right]dt + (Z^1_t-Z^2_t)dW_t \\
	&= -\left[f^1_t - f^2_t + Z^2_t \cdot \left(b^1_t - b^2_t\right)\right]dt + (Z^1_t-Z^2_t)dW^{\mu^1,\alpha^1}_t \\
	&= -\left[f^1_t - f^2_t + Z^1_t \cdot \left(b^1_t - b^2_t\right)\right]dt + (Z^1_t-Z^2_t)dW^{\mu^2,\alpha^2}_t,
\end{align*}
with $Y^1_T - Y^2_T = g(X,\mu^1) - g(X,\mu^2)$. Recall that $P^{\mu,\alpha}$ agrees with $P$ on $\F_0$ for each $\mu \in \P_\psi(\C)$ and $\alpha \in \A$. In particular,
\[
\E^{\mu^1,\alpha^1}\left[Y^1_0-Y^2_0\right] = \E\left[Y^1_0-Y^2_0\right] = \E^{\mu^2,\alpha^2}\left[Y^1_0-Y^2_0\right].
\]
Thus, if $\Delta g(X) := g(X,\mu^1) - g(X,\mu^2)$, then
\begin{align}
\E\left[Y^1_0-Y^2_0\right] &= \E^{\mu^1,\alpha^1}\left[\Delta g(X) + \int_0^T\left(f^1_t - f^2_t + Z^2_t \cdot \left(b^1_t - b^2_t\right)\right)dt\right] \label{unique1} \\
	&= \E^{\mu^2,\alpha^2}\left[\Delta g(X) + \int_0^T\left(f^1_t - f^2_t + Z^1_t \cdot \left(b^1_t - b^2_t\right)\right)dt\right]. \label{unique2}
\end{align}

Since the optimal control maximizes the Hamiltonian,
\begin{align*}
f^1_t + Z^2_t \cdot b^1_t &= h(t,X,\mu^1,\nu^1_t,Z^2_t,\alpha^1_t) \le H(t,X,\mu^1,\nu^1_t,Z^2_t) \\
		&= f_1(t,X,\mu^1) + f_2(t,\mu^1,\nu^1_t) + f_3(t,X,\alpha^2_t) + Z^2_t \cdot b^2_t.
\end{align*}
Thus, since
\[
f^2_t = f_1(t,X,\mu^2) + f_2(t,\mu^2,\nu^2_t) + f_3(t,X,\alpha^2_t),
\]
defining $\Delta f_1(t,X) := f_1(t,X,\mu^1) - f_1(t,X,\mu^2)$ yields
\begin{align}
f^1_t - f^2_t + Z^2_t \cdot \left(b^1_t - b^2_t\right) \le \Delta f_1(t,X) + f_2(t,\mu^1,\nu^1_t) - f_2(t,\mu^2,\nu^2_t). \label{unique3}
\end{align}
By switching the place of the indices, the same argument yields
\begin{align}
f^1_t - f^2_t + Z^1_t \cdot \left(b^1_t - b^2_t\right) \ge \Delta f_1(t,X) + f_2(t,\mu^1,\nu^1_t) - f_2(t,\mu^2,\nu^2_t). \label{unique4}
\end{align}
Since $f_2(t,\mu^i,\nu^i_t)$ are deterministic, applying inequality (\ref{unique3}) to (\ref{unique1}) and (\ref{unique4}) to (\ref{unique2}) yields
\[
0 \le \left[\E^{\mu^1,\alpha^1} - \E^{\mu^2,\alpha^2}\right]\left[\Delta g(X) + \int_0^T\Delta f_1(t,X)dt\right].
\]
Hypothesis (U.4) implies that the right side is at most zero, so in fact
\begin{align}
0 = \left[\E^{\mu^1,\alpha^1} - \E^{\mu^2,\alpha^2}\right]\left[\Delta g(X) + \int_0^T\Delta f_1(t,X)dt\right]. \label{unique5}
\end{align}

Suppose $\alpha^1 \neq \alpha^2$ holds on a $(t,\omega)$-set of strictly positive $\L \times P$-measure, where $\L$ is again Lebesgue measure. Then assumption (U.1) implies that the inequalities (\ref{unique3}) and (\ref{unique4}) are strict on a set of positive $\L \times P$-measure. Since $P \sim P^{\mu^1,\alpha^1} \sim P^{\mu^2,\alpha^2}$, this implies
\[
0 < \left[\E^{\mu^1,\alpha^1} - \E^{\mu^2,\alpha^2}\right]\left[\Delta g(X) + \int_0^T\Delta f_1(t,X)dt\right],
\]
which contradicts (\ref{unique5}). Thus $\alpha^1 \neq \alpha^2$ must hold $\L \times P$-$a.e.$, which yields
\begin{align*}
\frac{dP^{\mu^1,\alpha^1}}{dP} &= \mathcal{E}\left(\int_0^\cdot\sigma^{-1}b(t,X,\alpha^1_t) dW_t\right)_T \\
	&= \mathcal{E}\left(\int_0^\cdot\sigma^{-1}b(t,X,\alpha^2_t) dW_t\right)_T = \frac{dP^{\mu^2,\alpha^2}}{dP}, \ \ a.s.
\end{align*}
Thus $\mu^1 = P^{\mu^1,\alpha^1} \circ X^{-1} = P^{\mu^2,\alpha^2} \circ X^{-1} = \mu^2$, and $\nu^1_t = \delta_{P^{\mu^1,\alpha^1} \circ (\alpha^1_t)^{-1}} = \delta_{P^{\mu^2,\alpha^2} \circ (\alpha^2_t)^{-1}} = \nu^2_t$ a.e. 
\end{proof}

\section{Proof of finite-player approximation theorems} \label{section-finiteproof}
This section justifies the mean field approximation by proving Theorem \ref{approximationtheorem}, the general approximation result, as well as Proposition \ref{pr:priceimpact}, the rate of convergence for the price impact model.

\subsection{Proof of Theorem \ref{approximationtheorem}}
We work on the probability space of Section \ref{section-finite}. Recall that under $P$, $X^1,X^2,\ldots$ are i.i.d. with common law $\hat{\mu}$ and $\alpha^1_t,\alpha^2_t,\ldots$ are i.i.d. with common law $\hat{q}_t$, for almost every $t$. By symmetry, we may prove the result for player 1 only. For $\beta \in \A_n$, define $\beta^\alpha := (\beta,\alpha^2,\ldots,\alpha^n) \in \A^n_n$. We abuse notation somewhat by writing $\alpha$ in place of $(\alpha^1,\ldots,\alpha^n) \in \A^n_n$. Note that $(\alpha^1)^\alpha = \alpha$ and $P_n(\alpha) = P$, in our notation. For $\beta \in \A_n$, let
\begin{align*}
J'_n(\beta) &:= \E^{P_n(\beta^\alpha)}\left[\int_0^Tf(t,X^1,\hat{\mu},\hat{q}_t,\beta_t)dt + g(X^1,\hat{\mu}) \right].
\end{align*}
Note that $J'_n(\alpha^1)$ does not depend on $n$. We divide the proof into three lemmas.

\begin{lemma} \label{empirical2}
Let $F : \C \times \P_\psi(\C) \rightarrow \R$ be empirically measurable, and suppose $F(x,\cdot)$ is $\tau_\psi(\C)$ continuous at $\hat{\mu}$ for each $x \in \C$. Assume also that there exists $c > 0$ such that
\[
|F(x,\mu)| \le c\left(\psi(x) + \int\psi\,d\mu\right), \text{ for all } (x,\mu) \in \C \times \P_\psi(\C).
\]
Then $\lim_{n \rightarrow \infty}\E[|F(X^i,\mu^n) - F(X^i,\hat{\mu})|^p] = 0$ for each $i$ and $p \in [1,2)$.
\end{lemma}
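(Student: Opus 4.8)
The plan is to prove almost sure convergence $F(X^i,\mu^n) \to F(X^i,\hat\mu)$ together with a uniform integrability estimate, and then conclude by Vitali's convergence theorem. Recall that under $P$ the processes $X^1,X^2,\ldots$ are i.i.d.\ with common law $\hat\mu$. Write $\mu^{n,-i} := \frac{1}{n-1}\sum_{j \le n,\, j\ne i}\delta_{X^j}$, which is independent of $X^i$, so that $\mu^n = \frac{n-1}{n}\mu^{n,-i} + \frac1n\delta_{X^i}$; this decomposition will let us treat $X^i$ as frozen while the remainder of the empirical measure is governed by the strong law of large numbers.

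For the integrability, fix $p' \in (p,2)$. Since $\hat\mu = P^{\hat\mu,\hat\alpha}\circ X^{-1}$ and the density $dP^{\hat\mu,\hat\alpha}/dP$ has finite moments of all orders $\ge 1$ (this is exactly the computation in the proof of Lemma \ref{RNDbound}), Hölder's inequality with exponents $\frac{2}{2-p'}$ and $\frac{2}{p'}$ combined with $\E[\psi^2(X)] < \infty$ from (S.2) gives $\int\psi^{p'}\,d\hat\mu < \infty$; note that $p' < 2$ is essential here, which is why the lemma is restricted to $p < 2$. The growth bound on $F$, convexity of $t \mapsto t^{p'}$, and $\E[\psi^{p'}(X^j)] = \int\psi^{p'}\,d\hat\mu$ then give $\sup_n\E[|F(X^i,\mu^n)|^{p'}] < \infty$ and $\E[|F(X^i,\hat\mu)|^{p'}] < \infty$. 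Since $p'/p > 1$, the family $\{|F(X^i,\mu^n)|^p\}_n$ is bounded in $L^{p'/p}$ and hence uniformly integrable, and $F(X^i,\hat\mu) \in L^p$.

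The heart of the matter is the almost sure convergence, and here the subtle point is that the empirical measures $\mu^n$ do \emph{not} converge to $\hat\mu$ in $\tau_\psi(\C)$ — in fact not even in probability — so one cannot feed $\mu^n$ directly into the continuity hypothesis. The resolution is that continuity \emph{at the single point} $\hat\mu$ only involves finitely many test functions at each level of accuracy. Precisely: for fixed $x \in \C$ and $\epsilon > 0$, $\tau_\psi(\C)$-continuity of $F(x,\cdot)$ at $\hat\mu$ furnishes $\phi_1,\dots,\phi_k \in B_\psi(\C)$ and $\delta > 0$ such that $|\int\phi_l\,d(\mu-\hat\mu)| < \delta$ for all $l$ implies $|F(x,\mu)-F(x,\hat\mu)| < \epsilon$. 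For each fixed $l$ the strong law gives $\int\phi_l\,d\mu^{n,-i} \to \int\phi_l\,d\hat\mu$ $P$-a.s. (the $X^j$ are i.i.d.\ and $\psi$, hence $\phi_l$, is $\hat\mu$-integrable by the previous step), and since $\frac1n\phi_l(x) \to 0$ we get $\int\phi_l\,d\mu^n \to \int\phi_l\,d\hat\mu$ $P$-a.s. with $x$ in place of $X^i$; intersecting these $k$ events yields $\limsup_n|F(x,\mu^n)-F(x,\hat\mu)| \le \epsilon$ a.s., and intersecting over $\epsilon = 1/m$, $m \in \N$, gives $F(x,\mu^n)\to F(x,\hat\mu)$ $P$-a.s. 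Since $F$ is empirically measurable and $X^i$ is independent of $(X^j)_{j\ne i}$, Fubini's theorem (over $x \sim \hat\mu$) upgrades this to $F(X^i,\mu^n) \to F(X^i,\hat\mu)$ $P$-a.s.; the same argument shows $x \mapsto F(x,\hat\mu)$ is measurable, being an a.s.\ pointwise limit of empirically measurable functions.

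With both ingredients in hand, Vitali's convergence theorem gives $\E[|F(X^i,\mu^n) - F(X^i,\hat\mu)|^p] \to 0$ for each $p \in [1,2)$. I expect the main obstacle to be the almost sure convergence step just described: the naive approach via $\tau_\psi$-convergence of $\mu^n$ fails outright, so making the argument work hinges on the observation that pointwise continuity only ``sees'' finitely many functionals at a time, together with the bookkeeping needed to disentangle $X^i$ from the empirical measure containing it.
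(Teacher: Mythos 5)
Your proposal is correct and follows essentially the same route as the paper's proof: isolate $X^i$ by writing $\mu^n$ in terms of the independent empirical measure of the other players, use the fact that $\tau_\psi(\C)$-continuity at the single point $\hat{\mu}$ involves only finitely many test functions in $B_\psi(\C)$ together with the strong law of large numbers to get a.s.\ convergence for each frozen $x$, and then conclude by uniform integrability. The only (minor) difference is in the integrability step: the paper bounds $\E[F^2(X^1,\mu^n)]$ directly by $4c^2\E[\psi^2(X^1)]$ with $X^1 \sim \hat{\mu}$, whereas you derive $\int\psi^{p'}\,d\hat{\mu}<\infty$ for $p'<2$ via H\"older and the Girsanov density moments, which is a slightly more careful justification of the same uniform-integrability bound and explains the restriction $p<2$ in the same way.
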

\begin{proof}
By symmetry, it suffices to prove this for $i = 1$. By replacing $F(x,\mu)$ with $|F(x,\mu) - F(x,\hat{\mu})|$, assume without loss of generality that $F \ge 0$ and $F(x,\hat{\mu}) = 0$ for all $x$. Define
\[
\nu^n := \frac{1}{n-1}\sum_{i=2}^n\delta_{X^i}.
\]
By independence of $X^1$ and $\nu^n$, we have
\begin{align*}
\E[F(X^1,\mu^n)] &= \E\left[\E\left[F\left(x, \frac{1}{n}\delta_x + \frac{n-1}{n}\nu^n\right)\right]_{x = X^1}\right].
\end{align*}
Now let $\epsilon > 0$. By continuity of $F(x,\cdot)$, there exist $\delta > 0$ and $\phi_1,\ldots,\phi_k \in B_\psi(\C)$ such that $F(x,\nu) < \epsilon$ whenever $|\int\phi_id(\nu - \hat{\mu})| < \delta$ for all $i=1,\ldots,k$. By the law of large numbers,
\[
\lim_{n\rightarrow\infty}\left|\int\phi_i\,d\left(\frac{1}{n}\delta_x + \frac{n-1}{n}\nu^n - \hat{\mu}\right)\right| = 0, \ \ a.s.
\]
Thus
\[
\limsup_{n\rightarrow\infty}F\left(x, \frac{1}{n}\delta_x + \frac{n-1}{n}\nu^n\right) \le \epsilon, \ a.s.,
\]
for each $\epsilon > 0$, and so $F\left(x, \frac{1}{n}\delta_x + \frac{n-1}{n}\nu^n\right) \rightarrow 0$ a.s. for each $x$. The growth assumption along with (S.2) yield
\[
\E\left[F^2(X^1,\mu^n)\right] \le 2c^2\E\left[\psi^2(X^1) + \left(\int\psi\,d\mu^n\right)^2\right] \le 4c^2\E\left[\psi^2(X^1)\right] < \infty,
\]
and we conclude by the dominated convergence theorem.
\end{proof}

\begin{lemma} \label{step1}
We have $\lim_{n\rightarrow \infty}\sup_{\beta \in \A_n}|J_{n,1}(\beta^\alpha) - J'_n(\beta)| = 0.$
\end{lemma}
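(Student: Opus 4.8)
The plan is to subtract the two expectations under their common measure $P_n(\beta^\alpha)$, pass to the reference measure $P$ by H\"{o}lder's inequality against a uniformly bounded Girsanov density, and then invoke the law of large numbers through Lemma \ref{empirical2}. Write
\[
J_{n,1}(\beta^\alpha) - J'_n(\beta) = \E^{P_n(\beta^\alpha)}\left[\int_0^T\left(\Delta^{(1)}_t + \Delta^{(2)}_t\right)dt + \Delta g\right],
\]
with $\Delta g := g(X^1,\mu^n) - g(X^1,\hat\mu)$, $\Delta^{(1)}_t := f(t,X^1,\mu^n,q^n(\beta^\alpha_t),\beta_t) - f(t,X^1,\hat\mu,q^n(\beta^\alpha_t),\beta_t)$, and $\Delta^{(2)}_t := f(t,X^1,\hat\mu,q^n(\beta^\alpha_t),\beta_t) - f(t,X^1,\hat\mu,\hat q_t,\beta_t)$. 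Fix $p \in (1,2)$ and let $p'$ be its conjugate exponent. H\"{o}lder's inequality gives
\[
\left|J_{n,1}(\beta^\alpha) - J'_n(\beta)\right| \le \left\|\frac{dP_n(\beta^\alpha)}{dP}\right\|_{L^{p'}(P)} \left\|\int_0^T\left(\Delta^{(1)}_t + \Delta^{(2)}_t\right)dt + \Delta g\right\|_{L^{p}(P)}.
\]
Since $b$ carries no mean field term by (F.1), all but the first summand in the stochastic exponential defining $dP_n(\beta^\alpha)/dP$ vanish, its quadratic variation is bounded by $4Tc^2$ with $c$ a bound for $\sigma^{-1}b$, and exactly as in the proof of Lemma \ref{RNDbound} one obtains $\|dP_n(\beta^\alpha)/dP\|_{L^{p'}(P)} \le M$ for a constant $M$ depending on neither $\beta$ nor $n$.

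It thus suffices to show that the $L^p(P)$-norm of the bracket tends to zero uniformly in $\beta\in\A_n$. Note first that $\hat\mu \sim \X$, because the density $dP^{\hat\mu,\hat\alpha}/dP$ is a.s.\ positive, so $\hat\mu = P^{\hat\mu,\hat\alpha}\circ X^{-1}\sim P\circ X^{-1} = \X$; hence (F.3)--(F.5) make $g$ eligible for Lemma \ref{empirical2}, which yields $\E[|\Delta g|^p]\to 0$. For the $\Delta^{(1)}$ term, dominate $|\Delta^{(1)}_t| \le \Psi(t,X^1,\mu^n)$, where the $\beta$-free majorant $\Psi(t,x,\mu) := \sup_{(q,a)\in\P(A)\times A}|f(t,x,\mu,q,a) - f(t,x,\hat\mu,q,a)|$ is, for each fixed $t$, a legitimate input to Lemma \ref{empirical2}: the growth bound comes from (F.5); empirical measurability follows by replacing the supremum with one over a countable dense subset of the separable space $\P(A)\times A$; and $\tau_\psi(\C)$-continuity of $\mu\mapsto\Psi(t,x,\mu)$ at $\hat\mu$ follows from Lemma \ref{lemma-joint-continuity} applied to $(\mu,(q,a))\mapsto f(t,x,\mu,q,a)$ on $\P_\psi(\C)\times\left(\P(A)\times A\right)$, which is jointly continuous at every point with $\mu\sim\X$ by (F.4), with $\P(A)\times A$ compact by (S.1). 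Lemma \ref{empirical2} then gives $\E[\Psi(t,X^1,\mu^n)^p]\to 0$ for each $t$, while (S.2) and (F.5) bound this quantity uniformly in $t$ and $n$; bounded convergence in $t$ and Jensen's inequality yield $\E\bigl[(\int_0^T\Psi(t,X^1,\mu^n)\,dt)^p\bigr]\to 0$.

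The $\Delta^{(2)}$ term is handled by the law of large numbers for the control processes rather than the states. Fix a metric $d$ on the compact space $\P(A)$ dominated by total variation (e.g.\ L\'{e}vy--Prokhorov), and observe that $q^n(\beta^\alpha_t)$ differs from the full empirical measure $q^n(\alpha_t) := \tfrac1n\sum_{i=1}^n\delta_{\alpha^i_t}$ by at most $2/n$ in total variation, hence in $d$, regardless of $\beta$. Therefore $|\Delta^{(2)}_t| \le \omega(t,X^1;\delta^n_t)$, where $\delta^n_t := 2/n + d(q^n(\alpha_t),\hat q_t)$ is independent of $\beta$ and $\omega(t,x;\delta) := \sup\{|f(t,x,\hat\mu,q,a) - f(t,x,\hat\mu,q',a)| : d(q,q')\le\delta,\ a\in A\}$. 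Under $P$ the controls $\alpha^1_t,\alpha^2_t,\dots$ are i.i.d.\ with law $\hat q_t$ for a.e.\ $t$, so $q^n(\alpha_t)\to\hat q_t$ weakly $P$-a.s.\ for a.e.\ $t$, whence $\delta^n_t\to 0$ $P$-a.s.\ for a.e.\ $t$; since $f(t,x,\hat\mu,\cdot,\cdot)$ is continuous, hence uniformly continuous, on the compact $\P(A)\times A$ by (F.4) and (S.1), it follows that $\omega(t,X^1;\delta^n_t)\to 0$ $P$-a.s.\ for a.e.\ $t$. The bound $\omega(t,x;\delta)\le 2c(\psi(x)+\int\psi\,d\hat\mu)$ from (F.5) together with (S.2) provides domination, so the dominated convergence theorem gives $\E[\omega(t,X^1;\delta^n_t)^p]\to 0$ for a.e.\ $t$, uniformly bounded in $n$ and $t$; bounded convergence in $t$ and Jensen's inequality finish this term. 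Combining the three estimates bounds $\sup_{\beta\in\A_n}$ of the $L^p$-norm of the bracket by a $\beta$-free null sequence, and the lemma follows.

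The main obstacle is precisely the uniformity over $\beta\in\A_n$, and the two devices overcoming it are the passage to the $\beta$-free measure $P$ against a Girsanov density whose $L^{p'}(P)$-norm is bounded independently of $\beta$, and the replacement of each $\beta$-dependent integrand by a $\beta$-free majorant --- a supremum over the control and measure arguments for $\Delta^{(1)}$ and $\Delta g$, and the observation that $q^n(\beta^\alpha_t)$ stays within $2/n$ of the full empirical measure $q^n(\alpha_t)$ for $\Delta^{(2)}$. A secondary subtlety is that $\tau_\psi(\C)$ is not metrizable, so one may not directly interchange continuity with integration in $t$; this is sidestepped by applying Lemma \ref{empirical2} at each fixed time and using only the ordinary scalar bounded convergence theorem in the $t$-variable.
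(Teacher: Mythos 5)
Your proposal is correct and follows essentially the same route as the paper: the same three-term decomposition (measure argument, control-distribution argument, terminal reward), the same uniform $L^{p}$ bound on the Girsanov densities, the same $\beta$-free majorants via suprema over $(q,a)$ together with Lemma \ref{empirical2} and Lemma \ref{lemma-joint-continuity}, and the same $2/n$ total-variation comparison of $q^n(\beta^\alpha_t)$ with $q^n(\alpha_t)$ plus the law of large numbers for the controls. The only slip is cosmetic: the vanishing of the summands $i\ge 2$ in the stochastic exponential is because $\beta^i=\alpha^i$ there, not because of (F.1).
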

\begin{proof}
Note that, for any $\beta \in \A_n$,
\begin{align}
|J_{n,1}(\beta^\alpha) - J'_n(\beta)| 
	&\le \int_0^T\E^{P_n(\beta^\alpha)}[F_t(X^1,\mu^n) + G_t(X^1,q^n(\beta^\alpha_t))]dt  \nonumber \\
	&\quad\quad\quad + \E^{P_n(\beta^\alpha)}[|g(X^1,\mu^n)-g(X^1,\hat{\mu})|], \label{inequality-convergence}
\end{align}
where $F : [0,T] \times \C \times \P_\psi(\C) \rightarrow \R$ and $G : [0,T] \times \C \times \P(A) \rightarrow \R$ are defined by
\begin{align*}
F_t(x,\mu) &:= \sup_{(a,q) \in A \times \P(A)}|f(t,x,\mu,q,a) - f(t,x,\hat{\mu},q,a)|, \\
G_t(x,q) &:= \sup_{a \in A}|f(t,x,\hat{\mu},q,a) - f(t,x,\hat{\mu},\hat{q}_t,a)|.
\end{align*}
Theorem 18.19 of \cite{aliprantisborder} ensures that both functions are (empirically) measurable. Since $A$ and $\P(A)$ are compact, Lemma \ref{lemma-joint-continuity} assures us that $F_t(x,\cdot)$ is $\tau_\psi(\C)$-continuous at $\hat{\mu}$ and that $G_t(x,\cdot)$ is weakly continuous, for each $(t,x)$. Similar to the proof of Lemma \ref{RNDbound}, $\{dP_n(\beta^\alpha)/dP : \beta \in \A_n, \ n \ge 1\}$ are bounded in $L^p(P)$, for any $p \ge 1$. Since assumption (F.5) is uniform in $t$ for $f$, we deduce from Lemma \ref{empirical2} and the dominated convergence theorem that
\[
\lim_{n \rightarrow \infty}\sup_{\beta \in \A_n}\left[\int_0^T\E^{P_n(\beta^\alpha)}[F_t(X^1,\mu^n)]dt + \E^{P_n(\beta^\alpha)}[|g(X^1,\mu^n)-g(X^1,\hat{\mu})|]\right] = 0.
\]

It remains to check that the $G_t$ term converges. Note that $G_t(x,\cdot)$ is uniformly continuous, as $\P(A)$ is compact. Also $\V_1(q^n(\beta^\alpha_t), q^n(\alpha_t)) \le 2/n$, since these are empirical measures of $n$ points which differ in only one point (recall that $\V_1$ denotes total variation). Hence
\[
\lim_{n \rightarrow \infty}\sup_{\beta \in \A_n}\left|G_t(X^1,q^n(\alpha_t)) - G_t(X^1,q^n(\beta^\alpha_t))\right| = 0, \ \ a.s.
\]
Since $\alpha^1_t,\alpha^2_t,\ldots$ are i.i.d. with common law $\hat{q}_t$, we have $q^n(\alpha_t) \rightarrow \hat{q}_t$ weakly a.s. (see \cite{varadarajan-convergence}), and thus $G_t(X^1,q^n(\alpha_t)) \rightarrow 0$ a.s. Note that $dP_n(\beta^\alpha)/dP$ are bounded in $L^p(P)$ for any $p \ge 1$ and that the integrands above are bounded in $L^p(P)$ for any $p \in [1,2)$, by (F.5) and the same argument as in the proof of Lemma \ref{step1}. The dominated convergence theorem completes the proof.
\end{proof}

\begin{lemma} \label{step2}
For any $\beta \in \A_n$, $J'_n(\alpha^1) \ge J'_n(\beta)$.
\end{lemma}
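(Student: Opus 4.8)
The plan is to show that $J'_n(\beta)$ is an average, over the independent randomness carried by players $2,\dots,n$, of payoffs that player $1$ could obtain in his own frozen control problem using admissible controls; since each such payoff is at most the control value $V^{\hat\mu,\hat q}$, and this value is attained by $\alpha^1$, the inequality follows.

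First I would record that $J'_n(\alpha^1)=V^{\hat\mu,\hat q}$. Since $(\alpha^1)^\alpha=\alpha$ and $P_n(\alpha)=P$, one has $J'_n(\alpha^1)=\E^P[\int_0^T f(t,X^1,\hat\mu,\hat q_t,\hat\alpha(t,X^1))\,dt+g(X^1,\hat\mu)]$. Under $P$, $X^1$ is a weak solution of $dX^1_t=b(t,X^1,\hat\alpha(t,X^1))\,dt+\sigma(t,X^1)\,dW^1_t$ with $X^1_0\sim\lambda_0$; weak uniqueness --- which holds because, by (S.2) and Girsanov, removing the drift always returns the unique strong solution of the driftless equation --- shows that the law of $X^1$ under $P$ equals the law of $X$ under $P^{\hat\mu,\hat\alpha}$, namely $\hat\mu$ by the fixed-point property of the MFG solution. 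As the integrand is a fixed measurable functional of the state path, $\hat\mu$-integrable by (F.5) and $\int\psi\,d\hat\mu<\infty$, this expectation equals $J^{\hat\mu,\hat q}$ evaluated at the canonical control $t\mapsto\hat\alpha(t,X)$, which in turn equals $V^{\hat\mu,\hat q}$ by the definition of a solution of the MFG, $\hat\alpha$ being its optimal closed-loop control.

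Next, fix $\beta\in\A_n$ and write the probability space as $\Omega=\Omega''\times\Omega'$, where $\Omega''$ carries $(W^1,\xi^1)$ and $\Omega'$ carries $(W^i,\xi^i)_{i=2}^n$, so that $P=P''\otimes P'$ by independence; then $X^1$ is a functional of $\omega''$ and $X^2,\dots,X^n$ of $\omega'$. For $P'$-a.e.\ $\omega'$ the section $\beta^{\omega'}:=\beta(\cdot,\cdot,\omega')$ is $\mathbb{X}^1$-progressively measurable --- a routine argument, modulo the usual care with the completed filtration $\mathbb{F}^n$ --- hence, identifying $(\xi^1,W^1)$ with the canonical $(\xi,W)$ as in Remark \ref{filtration}, it is an admissible control for the problem with data frozen at $(\hat\mu,\hat q)$, so that $J^{\hat\mu,\hat q}(\beta^{\omega'})\le V^{\hat\mu,\hat q}$. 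Since $\beta^i=\alpha^i$ for $i\ge 2$, the density $dP_n(\beta^\alpha)/dP=\mathcal{E}\left(\int_0^\cdot(\sigma^{-1}b(t,X^1,\beta_t)-\sigma^{-1}b(t,X^1,\alpha^1_t))\,dW^1_t\right)_T$ is, for fixed $\omega'$, a function $D^{\omega'}$ of $\omega''$ alone which is a Dol\'eans exponential with bounded integrand; thus $\E^{P''}[D^{\omega'}]=1$ and $Q^{\omega'}:=D^{\omega'}P''$ is the measure under which $X^1$ solves the state SDE controlled by $\beta^{\omega'}$, i.e.\ $Q^{\omega'}$ plays the role of $P^{\hat\mu,\beta^{\omega'}}$. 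After checking uniform integrability of the reward by passing to the driftless measure $\hat P$ on $\Omega$ (under which $\psi(X^1)\in L^2$ by (S.2) and $dP_n(\beta^\alpha)/d\hat P$ is bounded in $L^2$ uniformly in $\beta$, exactly as in Lemma \ref{RNDbound}), Fubini's theorem yields
\[
J'_n(\beta)=\E^{P'}\Big[\,\E^{Q^{\omega'}}\Big[\int_0^T f(t,X^1,\hat\mu,\hat q_t,\beta^{\omega'}_t)\,dt+g(X^1,\hat\mu)\Big]\Big]=\E^{P'}\big[J^{\hat\mu,\hat q}(\beta^{\omega'})\big]\le V^{\hat\mu,\hat q}=J'_n(\alpha^1),
\]
which is the claim.

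The substantive obstacle is the slicing step: making rigorous the intuitively obvious fact that access to the (independent) information of the other players cannot help player $1$, including the bookkeeping for the completed filtration $\mathbb{F}^n$ when passing from a progressive $\beta$ to its sections $\beta^{\omega'}$. The remaining work --- the change of measure $Q^{\omega'}$, the Fubini interchange, and the integrability --- is routine, provided one systematically controls everything through the driftless reference measure $\hat P$, since under $P$ the law of $X^1$ is the controlled law $\hat\mu$, which need not possess second $\psi$-moments.
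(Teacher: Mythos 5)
Your proposal is correct in outline but follows a genuinely different route from the paper. The paper never disintegrates the product space: it runs the comparison principle for BSDEs on the $n$-player filtration $\mathbb{F}^n$, writing one BSDE with the maximized driver $\phi$ and terminal condition $g(X^1,\hat{\mu})$ and one with the driver $\widetilde{\phi}$ induced by $\beta$, both driven by $(W^1,\ldots,W^n)$. The fact that ``independent extra information cannot help player $1$'' is then encoded automatically by uniqueness of the BSDE solution: since the driver and terminal condition of the first BSDE are $\mathbb{X}^1$-measurable, its solution has $Z^2\equiv\cdots\equiv Z^n\equiv 0$, $\E[Y_0]=J'_n(\alpha^1)$, $\E[\widetilde{Y}_0]=J'_n(\beta)$ (after rewriting with the $P_n(\beta^\alpha)$-Wiener processes), and $\phi\ge\widetilde{\phi}$ gives the claim in a few lines, using only machinery already set up for Theorem \ref{existence}. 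Your conditioning argument instead reduces the deviation to the frozen single-player problem by slicing in $\omega'$: it is more elementary (no BSDE needed for this lemma) and makes the economic content transparent, but it buys this at the price of the measure-theoretic bookkeeping you flag: reducing to a product space, the functional representation of an $\mathbb{F}^n$-progressive $\beta$ as a progressive functional $\tilde{\beta}(t,X^1,\ldots,X^n)$ (up to $dt\times dP$-null sets, because of completion) so that a.e.\ section is admissible, a Fubini-type identification of the $\omega'$-section of the Dol\'eans exponential with the exponential of the sectioned integrand on $\Omega''$, and the transfer of $\E^{Q^{\omega'}}$ to $J^{\hat{\mu},\hat{q}}(\beta^{\omega'})$ on the canonical space by matching laws through the driftless reference measure. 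These steps are all true and standard, and your integrability treatment (passing to the driftless measure, uniform $L^p$ bounds on the densities as in Lemma \ref{RNDbound}) is the right way to handle the fact that $\hat{\mu}$ need not have $\psi^2$-moments; also your identification $J'_n(\alpha^1)=V^{\hat{\mu},\hat{q}}$ via weak uniqueness is consistent with what the paper asserts at the start of its proof. So there is no gap in the idea, only deferred (and genuinely nontrivial, if routine) verifications that the paper's BSDE comparison sidesteps entirely.
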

\begin{proof}
We use the comparison principle for BSDEs. Fix $n$ and $\beta \in \A_n$. Define $\phi, \ \widetilde{\phi} : [0,T] \times \Omega \times \R^d \rightarrow \R$ by
\begin{align*}
\phi(t,z) &:= \sup_{a \in A}\left[f(t,X^1,\hat{\mu},\hat{q}_t,a) + z \cdot \left(\sigma^{-1}b(t,X^1,a) - \sigma^{-1}b(t,X^1,\alpha^1_t)\right)\right] \\
\widetilde{\phi}(t,z) &:= f(t,X^1,\hat{\mu},\hat{q}_t,\beta_t) + z \cdot \left(\sigma^{-1}b(t,X^1,\beta_t) - \sigma^{-1}b(t,X^1,\alpha^1_t)\right)
\end{align*}
By Pardoux and Peng \cite{pardouxpengbsde}, there exist unique solutions $(Y,Z^1,\ldots,Z^n)$ and $(\widetilde{Y},\widetilde{Z}^1,\ldots,\widetilde{Z}^n)$ of the BSDEs
\[
\begin{cases}
dY_t \!\!\!\!\!\!&= -\phi_n(t,Z^1_t)dt + \sum_{j=1}^nZ^j_t dW^j_t \\
Y_T &= g(X^1,\hat{\mu}),
\end{cases}
\]
\[
\begin{cases}
d\widetilde{Y}_t \!\!\!\!\!\!&= -\widetilde{\phi}_n(t,\widetilde{Z}^1_t)dt + \sum_{j=1}^n\widetilde{Z}^j_t dW^j_t \\
\widetilde{Y}_T &= g(X^1,\hat{\mu}).
\end{cases}
\]
The unique solution of the first BSDE is in fact given by $Z^2 \equiv \ldots \equiv Z^n \equiv 0$, where $(Y,Z^1)$ are $\mathbb{X}^1$-progressively measurable and solve the BSDE
\[
\begin{cases}
dY_t \!\!\!\!\!\!&= -\left[H(t,X^1,\hat{\mu},\hat{q}_t,Z^1_t) - Z^1_t \cdot \sigma^{-1}b(t,X^1,\alpha^1_t)\right]dt + Z^1_t dW^1_t \\
Y_T &= g(X^1,\hat{\mu}).
\end{cases}
\]
This is due to the $\mathbb{X}^1$-measurability of the driver and terminal condition of this BSDE. Recall that $\alpha^1$ is optimal for the mean field problem, and thus it must maximize the Hamiltonian; that is,
\begin{align*}
H(t,X^1,\hat{\mu},\hat{q}_t,Z^1_t) &= h(t,X^1,\hat{\mu},\hat{q}_t,Z^1_t,\alpha^1_t) \\
	&= f(t,X^1,\hat{\mu},\hat{q}_t,\alpha^1_t) + Z^1_t \cdot \sigma^{-1}b(t,X^1,\alpha^1_t).
\end{align*}
Thus $dY_t = -f(t,X^1,\hat{\mu},\hat{q}_t,\alpha^1_t)dt + Z^1_t dW^1_t$. Since $W^1$ is a Wiener process under $P$, taking expectations yields $\E[Y_0] = J'_n(\alpha^1)$, which we note does not depend on $n$.

Similarly, note that $W^j$, $j \ge 2$ are Wiener processes under $P_n(\beta^\alpha)$, as is $W^{\beta,1}$. Hence, we rewrite $\widetilde{Y}$ as follows:
\[
\begin{cases}
d\widetilde{Y}_t \!\!\!\!\!\!&= -f(t,X^1,\hat{\mu},\hat{q}_t,\beta_t)dt + \widetilde{Z}^1_t dW^{\beta,1}_t + \sum_{j=2}^n\widetilde{Z}^j_t dW^j_t \\
\widetilde{Y}_T &= g(X^1,\hat{\mu}).
\end{cases}
\]
Take expectations, noting that $P = P_n(\beta^\alpha)$ on $\F^n_0$, to see $\E[\widetilde{Y}_0] = \E^{P_n(\beta^\alpha)}[\widetilde{Y}_0] = J'_n(\beta)$. Finally, since $\phi \ge \widetilde{\phi}$, the comparison principle for BSDEs yields $Y_0 \ge \widetilde{Y}_0$, and thus $J'_n(\beta) \le J'_n(\alpha^1)$.
\end{proof}

\begin{proof}[Proof of Theorem \ref{approximationtheorem}]
Simply let $\epsilon_n = 2\sup_{\beta \in \A_n}|J_{n,1}(\beta^\alpha) - J'_n(\beta)|$. Then $\epsilon_n \rightarrow 0$ by Lemma \ref{step1}, and Lemma \ref{step2} yields, for all $\beta \in \A_n$,
\[
J_{n,1}(\beta^\alpha) \le \frac{1}{2}\epsilon_n + J'_n(\beta) \le \frac{1}{2}\epsilon_n + J'_n(\alpha^1) \le \epsilon_n + J_{n,1}(\alpha).
\]
\end{proof}

\subsection{Proof of Proposition \ref{pr:priceimpact}}
\begin{proof}[Proof of Proposition \ref{pr:priceimpact}]
We simply modify the proof of Theorem \ref{approximationtheorem}, in light of the special structure of the price impact model. Namely, the inequality (\ref{inequality-convergence}) becomes
\begin{align*}
\epsilon_n &= 2\sup_{\beta \in \A_n}|J_{n,1}(\beta^\alpha) - J'_n(\beta)| \\
	&\le 2\sup_{\beta \in \A_n}\E^{P_n(\beta)}\int_0^T\left|\gamma X^1_t\int_A c'd(q^n_t(\beta^\alpha) - \hat{q}_t)\right|dt.
\end{align*}
Use H\"{o}lder's inequality to get
\begin{align*}
\epsilon_n &\le 2\gamma\E\left[\|X^1\|^4\right]^{1/4}\sup_{\beta \in \A_n}\E\left[\left(\frac{dP_n(\beta)}{dP}\right)^4\right]^{1/4}\int_0^TF_t(\beta)^{1/2}dt,
\end{align*}
where
\[
F_t(\beta) := \E\left[\left(\int_A c'd(q^n_t(\beta^\alpha) - \hat{q}_t)\right)^2\right]
\]
Assumption (S.2) with $\psi(x) = e^{c_1\|x\|}$ implies that $\|X^1\|$ has finite moments of all orders. Again, $\{dP_n(\beta^\alpha)/dP : \beta \in \A_n, \ n \ge 1\}$ are bounded in $L^p(P)$ for any $p \ge 1$. So it suffices to show
\[
\sup_{\beta \in \A_n}F_t(\beta) \le C/n,
\]
for some $C > 0$. This will follow from two inequalities: An easy calculation gives 
\[
\left|\int_A c'd(q^n_t(\beta^\alpha) - \hat{q}_t)\right| \le 2C_1/n + \left|\int_A c'd(q^n_t(\alpha) - \hat{q}_t)\right|,
\]
where $C_1 = \sup_{a \in A}|c'(a)|$. Since $\alpha^1_t,\alpha^2_t,\ldots$ are i.i.d. with common law $\hat{q}_t$, 
\begin{align*}
\E\left[\left(\int_A c'd(q^n_t(\alpha) - \hat{q}_t)\right)^2\right] = \mathrm{Var}(c'(\alpha^1_t))/n \le 4C^2_1/n.
\end{align*}
\end{proof}

\section{Conclusions}
This paper provides a theoretical framework for fairly general mean field games with uncontrolled volatility, allowing us to prove new existence and uniqueness results for several types of mean field interactions which arise naturally in applications, and which were not studied before. Such results include models with rank effects, nearest-neighbor (e.g. quantile) interactions, and mean field interactions through the controls. The strength of our approach is its generality; the existence, uniqueness, and approximation results apply easily to many concrete models. More refined analysis, for example of regularity of solutions or numerics, could in theory be based on the McKean-Vlasov FBSDE discussed in Remark \ref{strongformulation}, which essentially provides a probabilistic representation of the PDE approach of \cite{lasrylionsmfg}. This is left for further investigation. 

\bibliographystyle{amsplain}
\bibliography{mfgweakbib}

\end{document}